\documentclass[preprint,12pt]{elsarticle}

\usepackage{amssymb}
\usepackage{amsmath}

\journal{Nuclear Physics B}
\usepackage{amsmath,amssymb,amscd}
\allowdisplaybreaks[4]
\usepackage{latexsym}
\usepackage{graphicx}
\usepackage{subcaption}
\usepackage{caption}
\usepackage{amssymb}
\usepackage{autobreak}
\usepackage{amsthm,amsmath,amssymb}
\usepackage{mathrsfs}
\usepackage{xcolor}
\usepackage{geometry}
\usepackage[pdftex]{hyperref}
\makeatletter \@addtoreset{equation}{section} \makeatother

\newtheorem{theorem}{Theorem}[section]
\newtheorem{lemma}[theorem]{Lemma}

\newtheorem{corollary}[theorem]{Corollary}

\theoremstyle{definition}
\newtheorem{example}[theorem]{Example}

\theoremstyle{plain}
\newtheorem{remark}[theorem]{Remark}
\newtheorem{assumption}[theorem]{Assumption}
\begin{document}

\begin{frontmatter}



\title{Exponential ergodicity of exact and numerical solutions for McKean-Vlasov SDEs driven by L{\'e}vy noise} 

\author[mymainaddress]{Yang Sun}
\author[mysecondaryaddress1]{Yuhang Zhang}
\author[mymainaddress]{Minghui Song\corref{mycorrespondingauthor}}
\cortext[mycorrespondingauthor]{Corresponding author}
\ead{songmh@hit.edu.cn}
\address[mymainaddress]{School of Mathematics,
	Harbin Institute of Technology, Harbin, China, 150001.}
\address[mysecondaryaddress1]{School of Science, Harbin Institute of Technology, Shenzhen, China, 518055.} 


\begin{abstract}
 
 This paper investigates the exponential ergodicity of the exact solution and the tamed Euler solution for McKean--Vlasov stochastic differential equations driven by L\'evy noise.  First, we  establish exponential ergodicity for both the original equation and the tamed Euler method. Then we prove the convergence of the numerical invariant measure to the exact invariant measure, which is obtained  by combining the propagation of chaos (PoC) result with the strong convergence of the tamed Euler scheme. Furthermore, we derive a convergence rate for the numerical invariant measure by establishing uniform-in-time PoC and uniform-in-time convergence of the tamed Euler method. Finally, numerical experiments are presented to illustrate the theoretical results.
\end{abstract}



\begin{keyword}
McKean-Vlasov stochastic differential equations\sep L{\'e}vy noise  \sep exponential ergodicity \sep  tamed Euler method \sep  convergence
\end{keyword}

\end{frontmatter}



\section{Introduction}
\label{sec1}
 McKean–Vlasov stochastic differential equations (MV-SDEs) are a class of stochastic differential equations whose coefficients depend not only on the current state of the solution but also on its probability distribution. These equations were initially introduced by McKean \cite{McKean} in the 1960s and are now widely applied in areas such as finance, physics, biology, neuroscience and machine learning, see \cite{Chiara,Kolokolnikov,Sabbar2025,NEUFELD2025,Baladron,Bossy2015,Ullner2018}, as well as the  monographs \cite{Bensoussan,CarmonaI2018,CarmonaII2018}. In general,   MV-SDEs are difficult to solve explicitly, as the law of the solution is generally unknown. Consequently, numerical methods have attracted considerable research interest. A standard approach to approximating MV-SDEs is to introduce an associated interacting particle system. This is motivated by the fact that MV-SDEs describe the limiting mean-field behavior of interacting particles, a phenomenon known as propagation of chaos (PoC). Combining the PoC with time discretization provides the general framework for the numerical method of MV-SDEs. Over the past few decades, extensive studies have been devoted to numerical approximation of MV-SDEs, see \cite{Li2023,BaoReis2021,Reis2022,Ding2021,Kumar2021} and the references therein. 

 In many practical applications, systems may be subject to sudden shocks and discontinuities, such as price jumps in financial markets and abrupt spike events in neural firing (see, e.g., \cite{Amini,Andreis2018,Sima2020} and references therein), which cannot be captured by Gaussian noise alone, thus L{\'e}vy noise is incorporated to model such phenomena.  Numerical methods for MV-SDEs driven by L\'evy noise have therefore attracted increasing attention. For example, in \cite{Agarwal2020}, a Fourier-based numerical approximation was developed for MV-SDEs with L{\'e}vy noise, where the noise has no law dependency and the drift contains a linear mean-field interaction term. In \cite{Neelima2010}, a tamed Euler scheme was introduced under monotonicity and coercivity conditions, allowing the drift, diffusion and jump coefficients to have superlinear growth in the state variable. In \cite{Sani}, a  tamed Milstein scheme was proposed, where a strong \(L^2\)-convergence rate close to one was obtained. In \cite{Ngo2025}, a tamed-adaptive Euler method was studied for L{\'e}vy-driven MV-SDEs with superlinear drift and diffusion coefficients and a linear jump coefficient, and strong convergence was established on both finite and infinite time intervals. More recently, \cite{Zhu2025} investigated a general class of Euler-type schemes for L\'evy-driven MV-SDEs with superlinear coefficients and obtained a mean-square convergence rate close to \(1/2\) without imposing the coercivity condition used in \cite{Neelima2010}. 

 The existence and uniqueness of invariant measures, together with ergodicity, provide a mathematical characterization of the long-time statistical equilibrium of stochastic systems. For MV-SDEs, the law dependence of the coefficients makes such long-time behavior more involved, and this topic has attracted considerable attention in recent years. For instance, exponential ergodicity in the \(L^2\)-Wasserstein distance was established in \cite{WANG2018} under a uniform dissipativity condition. Ergodicity under dissipativity assumptions in long distances was investigated in \cite{BAO2025} and \cite{BAO202512}. For MV-SDEs with fully superlinear coefficients, exponential ergodicity and the existence of invariant measures were studied in \cite{2025dos}. The exponential ergodicity of pure-jump MV-SDEs was considered in \cite{Wang2021}, while the existence, multiplicity and uniqueness of stationary distributions for MV-SDEs with jumps were examined in \cite{Wang2026}.  Moreover, the existence and approximation of measure attractors and invariant measures for McKean-Vlasov stochastic lattice system with L{\'e}vy noise were investigated in \cite{BAI2026}.
 
 Although the long-time dynamical behavior of exact solutions to MV-SDEs has been extensively studied, explicit forms of invariant measures are generally unavailable. This makes it essential to develop reliable numerical methods for approximating long-time dynamics, while such results remain relatively limited. Recently, \cite{soni2025} established exponential ergodicity and long-time approximation results for MV-SDEs, their interacting particle systems, and the corresponding tamed Euler scheme under fully superlinear coefficients. In \cite{Cui2024}, a truncated Euler method was developed for MV-SDEs with superlinear growth coefficients; the existence and uniqueness of the numerical invariant measure were proved, together with the convergence between the numerical and exact invariant measures. However, to the best of our knowledge, long-time approximation results for MV-SDEs driven by L\'evy noise have not yet been established. Therefore, in this paper, we consider the exponential ergodicity of the MV-SDE and whether numerical methods can preserve its ergodic property.

 The remainder of this paper is organized as follows. 
 Section~2 presents the notation and preliminaries. 
 Sections~3 and~4 establish the exponential ergodicity of the exact MV-SDE and the tamed Euler scheme, respectively. 
Section~5 establishes finite-time PoC and time-discretization estimates, leading to the convergence of the numerical invariant measure.
Section~6 establishes uniform-in-time PoC and time-discretization estimates, leading to a convergence rate for the numerical invariant measure. 
 Section~7 provides numerical experiments to support the theoretical results.
 \section{Notations and preliminaries }
 \subsection{Notations}
 Let $ (\Omega,\mathcal F,\{{\mathcal F}_t\}_{t\geq 0}, \mathbb{P}) $ be a complete probability space for which the filtration $ \{{\mathcal F}_t\}_{t\geq 0} $ is assumed to satisfy the usual conditions, i.e., it is right continuous and $ {\mathcal F}_0 $ contains all $ \mathbb{P} $-null sets. 

  Throughout this paper, for a vector $ x\in \mathbb{R}^d  $, $ \left| x\right|  $ denotes the Euclidean norm. For a matrix $  A \in \mathbb{R}^{d\times m}$, $ A^\mathrm{T} $ denotes its transpose, and
 $ \left| A\right|=\sqrt{\mathrm{trace}(A^\mathrm{T}A)} $ denotes its Frobenius norm. The inner product of vectors $ x$ and $ y $ is written as $ \left\langle x,y\right\rangle  $.  The Dirac measure at point $ x $ is denoted by $ \delta_x $. For any $a,b\in \mathbb{R}$, let $a\lor b=\max\left\lbrace a,b\right\rbrace $ and $a\land b=\min\left\lbrace a,b \right\rbrace $.  The set of positive integers is denoted by $\mathbb{N}$ and $\mathbb{N}_0:=\mathbb{N}\cup \left\lbrace0 \right\rbrace $.
 
  Now we introduce the set of all probability measures on $ (\mathbb{R}^d,\mathcal B(\mathbb{R}^d)) $, denoted by $ \mathcal P(\mathbb{R}^d) $, where $ \mathcal B(\mathbb{R}^d) $ is the Borel $ \sigma $-field over $ \mathbb{R}^d $. The subspace of $ \mathcal P(\mathbb{R}^d) $ with finite $ r $-th moment is defined as
 \begin{equation*}
 	\mathcal P_r(\mathbb{R}^d):=\left\lbrace \mu\in \mathcal P(\mathbb{R}^d)\middle|\int_{\mathbb{R}^d}\left| x\right|^r\mu (\mathrm{d}x)<\infty  \right\rbrace .
 \end{equation*}
 Then for $ r\geq 1 $, $ \mathcal{P}_r(\mathbb{R}^d) $ is a Polish space endowed with the Wasserstein distance
 \begin{equation*}
 	\mathbb{W}_r(\mu ,\sigma ):=\inf_{\pi \in \mathcal{C}
 		(\mu ,\sigma )}\left( \int_{\mathbb{R}^d\times\mathbb{R}^d}\left| x-y\right|^r\pi (\mathrm{d}x,\mathrm{d}y) \right) ^{\frac{1}{r}}, ~  \mu ,\sigma \in \mathcal P_r(\mathbb{R}^d),
 \end{equation*}
 where $\mathcal{C}(\mu ,\sigma )$ is the set of couplings for $ \mu $ and $ \sigma $. In other words, $ \pi \in \mathcal{C}(\mu ,\sigma ) $  is a probability measure such that $ \pi (\cdot ,\mathbb{R}^d)=\mu(\cdot) $ and $ \pi (\mathbb{R}^d,\cdot)=\sigma(\cdot) $. 
 \par Finally, we note that $ C>0 $ denotes a generic constant, which is always independent of the numerical step size $ \Delta $ and the number of particles $ N $, and its value may vary from one instance to another.
 \subsection{Preliminaries}
 In this article, we consider the following  MV-SDEs driven by L{\'e}vy noise
 \begin{align}\label{eq1.1}
 	\mathrm{d}x(t)=&b(x(t),\mu_t^x)\mathrm{d}t+g(x(t),\mu_t^x)\mathrm{d}B(t)+\int_{Z}\gamma(x(t),\mu_t^x,z)\widetilde{N}(\mathrm{d}t,\mathrm{d}z)
 \end{align}
 with initial value $x(0)=\xi$, where  $ b:\mathbb{R}^d\times \mathcal{P}_2(\mathbb{R}^d)\rightarrow\mathbb{ R}^d, g:\mathbb{R}^d\times \mathcal{P}_2(\mathbb{R}^d)\rightarrow\mathbb{ R}^{d\times m} , \gamma: \mathbb{R}^d\times \mathcal{P}_2(\mathbb{R}^d)\times Z\rightarrow\mathbb{ R}^d $ are Borel measurable maps, $ \mu^x_t $ denotes the law of $ x(t) $, $ B(t) $ is an $m$-dimensional standard Brownian motion, $ N(\mathrm{d}t,\mathrm{d}z) $ is the Poisson random measure on a $ \sigma $-finite measure space $ (Z,\Sigma,\nu ) $ with $ Z \subseteq \mathbb{R}^d\backslash\{0\} $, independent of $ B(t) $, and $ \widetilde{N}(\mathrm{d}t,\mathrm{d}z):= N(\mathrm{d}t,\mathrm{d}z)-\nu(\mathrm{d}z)\mathrm{d}t $ is its compensated Poisson random measure, $ \nu $ denotes the intensity measure satisfying $ \nu(Z)<\infty $. 
 
 We now proceed to state the assumptions.
  \begin{assumption}\label{2.1}
 $	\mathbb{E}\left| \xi \right|^{2} <\infty$. 
 \end{assumption}
  \begin{assumption}\label{dissipative}
 	There exist constants $\lambda_{1},\lambda_{2}> 0$ such that for any $ x_1,x_2\in \mathbb{R}^d $ and $
 	\mu_1,\mu_2 \in \mathcal P_2(\mathbb{R}^d)  $,
 	\begin{eqnarray*}
 		\begin{split}
 			\left\langle x_1-x_2,b(x_1,\mu_1)-b(x_2,\mu_2)\right\rangle
 			\leq -\lambda_{1}\left|x_1-x_2 \right|^2+\lambda_{2}\mathbb{W}^2_2(\mu_1,\mu_2).
 		\end{split}
 	\end{eqnarray*}
 \end{assumption}
 \begin{assumption}\label{g}
 	There exists a constant $\lambda_{3}> 0$ such that for any $ x_1,x_2\in \mathbb{R}^d $ and $
 	\mu_1,\mu_2 \in \mathcal P_2(\mathbb{R}^d)  $,
 	\begin{eqnarray*}
 		\begin{split}
 		\left|g(x_1,\mu_1)-g(x_2,\mu_2)\right|^2\vee\int_{Z}\left| \gamma(x_1,\mu_1,z)-\gamma(x_2,\mu_2,z)\right|^{2} \nu(\mathrm{d}z)	\leq \lambda_{3}\big( \left|  x_1-x_2 \right|^2 +\mathbb{W}^2_2(\mu_1,\mu_2)\big).
 		\end{split}
 	\end{eqnarray*}
 \end{assumption}
 \begin{remark} \label{re1}
 	It follows directly from Assumptions \ref{dissipative} and \ref{g} that  for any $x \in \mathbb{R}^{d}$ and $\mu \in \mathcal{P}_{2}(\mathbb{R}^{d})$,
 	\begin{eqnarray}\label{3.16}
 		\left\langle x,b(x,\mu)\right\rangle \leq -\left( \lambda_{1}-\frac{1}{2}\right) \left|x\right|^2+\lambda_{2}\mathbb{W}^2_2(\mu,\delta_0)+C,
 	\end{eqnarray}
 	where $C=\frac{1}{2} \left|b(0,\delta_0) \right|^2 $, and
 	\begin{align}\label{5.2}
 		\left| g(x,\mu)\right|^2\vee\int_{Z}\left| \gamma(x,\mu,z)\right|^{2} \nu(\mathrm{d}z)
 		\leq 2\lambda_{3}\big( \left|  x\right|^2 +\mathbb{W}^2_2(\mu,\delta_{0})\big)\!+C,
 	\end{align}
 	where  $C=2 \left|g(0,\delta_0) \right|^2\vee2\int_{Z}\left|\gamma(0,\delta_0,z) \right|^2\nu(\mathrm{d}z)$.
 \end{remark}

 According to \cite{Neelima2010}, we know that Assumptions \ref{2.1}-\ref{g}  imply the existence of a unique strong solution to the MV-SDE \eqref{eq1.1}.
 
  \section{Exponential ergodicity of the MV-SDE}
 In this section, we study the ergodic property of MV-SDE \eqref{eq1.1}. Before giving the main results,
 we define an operator $P_{t}  $ on $\mathcal{P}_{2}(\mathbb{R}^d)$ by letting $P_{t}\mu :=\mathcal{L}(x(t))$ for $\mathcal{L}(x(0))=\mathcal{L}(\xi)=\mu$. According to \cite{WANG2018}, $\left\lbrace P_{t} \right\rbrace_{t \geq 0} $ satisfies $P_{s+t} = P_{s}P_{t}$ for any $s, t \geq 0$. 

 We begin this section with the following lemma.
 \begin{lemma}\label{infty bound}
 	Let Assumptions \ref{2.1}-\ref{g} hold with $2\lambda_1-2\lambda_2-8\lambda_{3}-1>0$. Then
 	\begin{align}
 		\sup_{t\geq 0}\mathbb{E}| x(t)|^2\leq C.\notag
 	\end{align}
 \end{lemma}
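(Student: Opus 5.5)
The plan is to apply Itô's formula to $e^{\kappa t}|x(t)|^2$ for a suitable $\kappa>0$, take expectations, and use Remark \ref{re1} to bound the drift, diffusion and jump contributions. The central fact is that $\mathbb{W}_2^2(\mu_t^x,\delta_0)\le \mathbb{E}|x(t)|^2$, because the product coupling of $\mu^x_t$ and $\delta_0$ is admissible. Since $\nu(Z)<\infty$ and the coefficients have linear growth, the solution has finite second moments on bounded intervals. A standard localization with stopping times $\tau_R=\inf\{t:|x(t)|>R\}$ then makes the Brownian and compensated Poisson integrals true martingales, and we pass to the limit with Fatou's lemma.

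For the computation, Itô's formula for jump processes gives
\begin{align*}
\mathrm{d}|x(t)|^2=&\Big(2\langle x(t),b(x(t),\mu^x_t)\rangle+|g(x(t),\mu^x_t)|^2+\int_Z|\gamma(x(t),\mu^x_t,z)|^2\nu(\mathrm{d}z)\Big)\mathrm{d}t+\mathrm{d}M(t),
\end{align*}
where $M$ is a local martingale. Here the jump term $|x+\gamma|^2-|x|^2-2\langle x,\gamma\rangle=|\gamma|^2$ has been integrated against $\nu$. Inserting \eqref{3.16} and \eqref{5.2}, the drift of $|x|^2$ is at most
\[
-(2\lambda_1-1)|x|^2+2\lambda_2\mathbb{W}_2^2(\mu^x_t,\delta_0)+4\lambda_3\big(|x|^2+\mathbb{W}_2^2(\mu^x_t,\delta_0)\big)+C .
\]
Taking expectations and using $\mathbb{W}_2^2\le\mathbb{E}|x|^2$ leads to
\[
\frac{\mathrm{d}}{\mathrm{d}t}\mathbb{E}|x(t)|^2\le-(2\lambda_1-2\lambda_2-8\lambda_3-1)\,\mathbb{E}|x(t)|^2+C=: -\kappa\,\mathbb{E}|x(t)|^2+C,
\]
with $\kappa>0$ by hypothesis. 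This is used rigorously in integrated form through $e^{\kappa t}|x(t)|^2$.

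To conclude, Itô's formula applied to $e^{\kappa t}|x(t)|^2$ up to $t\wedge\tau_R$ gives
\[
\mathbb{E}\big[e^{\kappa (t\wedge\tau_R)}|x(t\wedge\tau_R)|^2\big]\le \mathbb{E}|\xi|^2+\mathbb{E}\int_0^{t\wedge\tau_R}e^{\kappa s}\big(\kappa|x(s)|^2+\text{drift bound}\big)\mathrm{d}s .
\]
After letting $R\to\infty$, the $|x|^2$ terms cancel against the $\mathbb{E}$ terms and leave $e^{\kappa t}\mathbb{E}|x(t)|^2\le\mathbb{E}|\xi|^2+C(e^{\kappa t}-1)/\kappa$. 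Hence $\sup_{t\ge0}\mathbb{E}|x(t)|^2\le\mathbb{E}|\xi|^2+C/\kappa$, which is finite by Assumption \ref{2.1}.

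The only delicate point is the localization. The measure term $\mathbb{W}_2^2(\mu^x_s,\delta_0)=\mathbb{E}|x(s)|^2$ is not stopped, so the cancellation must be done after removing the stopping time. To handle this, first record the finite-horizon bound $\sup_{s\le T}\mathbb{E}|x(s)|^2<\infty$, which follows from linear growth and Gronwall. This justifies dominated convergence in the time integrals, and then the comparison argument above applies directly.
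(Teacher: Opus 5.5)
Your proposal is correct and follows essentially the same route as the paper: Itô's formula for $e^{\kappa t}|x(t)|^2$, the bounds \eqref{3.16} and \eqref{5.2}, the estimate $\mathbb{W}_2^2(\mu_s^x,\delta_0)\le\mathbb{E}|x(s)|^2$, and the choice $\kappa=2\lambda_1-2\lambda_2-8\lambda_3-1$, which gives $\mathbb{E}|x(t)|^2\le e^{-\kappa t}\mathbb{E}|\xi|^2+C/\kappa$. The only difference is that you spell out the localization and the finite-horizon moment bound needed to discard the martingale terms, which the paper leaves implicit.
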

 \begin{proof}
 	Using Itô's formula, it follows from \eqref{3.16} and \eqref{5.2} that for any $\lambda > 0$, 
 	\begin{align*}
 		&\mathbb{E} \left( e^{\lambda t} |x(t)|^2 \right) \\
 		\leq & \mathbb{E}|x(0)|^2 +\lambda \mathbb{E}\left(  \int_0^t e^{\lambda s} |x(s)|^2 \, \mathrm{d}s \right)  \\
 		&+ \! \mathbb{E} \Bigg[ \int_0^t e^{\lambda s}\bigg( 2 \langle x(s), b( x(s), \mu_s^x) \rangle +\left| g(x(s),\mu_s^x) \right|^2  +\int_{Z}\left| \gamma(x(s),\mu_s^x,z)\right| ^2\nu(\mathrm{d}z)\,\bigg)  \mathrm{d}s \Bigg]  \\
 		\leq & \mathbb{E} |\xi|^2
 		+ \left(  \lambda -2 \lambda_1+2\lambda_2+8\lambda_{3}+1\right)  \int_0^t e^{\lambda s} \mathbb{E}|x(s)|^2 \, \mathrm{d}s +C \int_0^t e^{\lambda s} \mathrm{d}s,
 	\end{align*}
 	where we use $\mathbb{E}\mathbb{W}_2^2(\mu_s^x,\delta_0)\leq\mathbb{E}|x(s)|^2 $. By setting $\lambda=2\lambda_1-2\lambda_2-8\lambda_{3}-1>0$, we obtain
 	\begin{align*}
 		\mathbb{E}|x(t)|^2 
 		\leq& e^{-\lambda t} \mathbb{E} |\xi|^2 +\frac{C}{\lambda}
 	\end{align*}
 	for all $t \geq 0$, which completes the proof.
 \end{proof}
 
 \begin{theorem}\label{4.2}
 	Let Assumptions \ref{2.1}-\ref{g} hold with $2\lambda_1-2\lambda_2-8\lambda_{3}-1>0$. Then, there exists a positive constant $\lambda^*$, independent of time $t$, such that
 	\begin{enumerate}
 		\item[(1)] for any $\mu$, $\sigma\in\mathcal{P}_{2}(\mathbb{R}^d)$,
 		\begin{align}\label{**}
 			\mathbb{W}_{2}^{2}(P_{t}\mu,\,P_{t}\sigma)\leq  e^{-\lambda^* t}\mathbb{W}_{2}^{2}(\mu,\,\sigma),\ \ t\geq 0.
 		\end{align}
 		\item[(2)] MV-SDE \eqref{eq1.1}  has a unique invariant probability measure $\mu^*\in\mathcal{P}_{2}(\mathbb{R}^d)$, and
 		\begin{align*}
 			\mathbb{W}_{2}^{2}(P_{t}\mu,\mu^*)\leq e^{ -\lambda^* t}\mathbb{W}_{2}^{2}(\mu,\mu^*),\ \ t\geq 0,\,\mu\in\mathcal{P}_{2}(\mathbb{R}^d).
 		\end{align*}
 	\end{enumerate}
 \end{theorem}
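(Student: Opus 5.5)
Part (1) follows from a synchronous coupling. Fix $\mu,\sigma\in\mathcal P_2(\mathbb R^d)$ and choose initial data $\xi,\eta$ that form an optimal coupling: $\mathcal L(\xi)=\mu$, $\mathcal L(\eta)=\sigma$ and $\mathbb E|\xi-\eta|^2=\mathbb W_2^2(\mu,\sigma)$. Let $x(t)$ and $y(t)$ be the solutions of \eqref{eq1.1} started from $\xi$ and $\eta$, driven by the same $B$ and the same $\widetilde N$. Then $\mu^x_t=P_t\mu$, $\mu^y_t=P_t\sigma$, and $(x(t),y(t))$ is a coupling of these laws, so
\[
\mathbb W_2^2(\mu_t^x,\mu_t^y)\le \mathbb E|x(t)-y(t)|^2 .
\]
This is the key inequality that closes the argument.

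Apply It\^o's formula to $e^{\lambda t}|x(t)-y(t)|^2$ and take expectations; the martingale terms vanish because the second moments are bounded, as in Lemma \ref{infty bound}. The drift contributes through Assumption \ref{dissipative}, and the diffusion and jump compensator through Assumption \ref{g}. This gives
\[
\frac{\mathrm d}{\mathrm dt}\big(e^{\lambda t}\mathbb E|x-y|^2\big)\le e^{\lambda t}\Big[(\lambda-2\lambda_1+2\lambda_3)\mathbb E|x-y|^2+(2\lambda_2+2\lambda_3)\mathbb W_2^2(\mu^x_t,\mu^y_t)\Big].
\]
The jump term is handled exactly: $\int_Z|\gamma(x,\cdot,z)-\gamma(y,\cdot,z)|^2\nu(\mathrm dz)$ appears with coefficient one, since $|a+h|^2-|a|^2-2\langle a,h\rangle=|h|^2$.

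Using the coupling bound on the measure term, the total coefficient becomes $\lambda-2\lambda_1+2\lambda_2+4\lambda_3$. Set $\lambda^*:=2\lambda_1-2\lambda_2-4\lambda_3$. This is positive, since it is at least $2\lambda_1-2\lambda_2-8\lambda_3-1>0$. Integrating then yields
\[
\mathbb E|x(t)-y(t)|^2\le e^{-\lambda^*t}\,\mathbb E|\xi-\eta|^2,
\]
and combining this with the coupling bound gives \eqref{**}.

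For part (2), existence comes from a Cauchy argument in the complete space $(\mathcal P_2,\mathbb W_2)$. Take $\mu=\delta_0$ and use the semigroup property $P_{t+s}=P_tP_s$ together with \eqref{**}:
\[
\mathbb W_2^2(P_t\delta_0,P_{t+s}\delta_0)\le e^{-\lambda^*t}\,\mathbb W_2^2(\delta_0,P_s\delta_0)\le e^{-\lambda^*t}\sup_{s}\mathbb E|x(s)|^2\le Ce^{-\lambda^*t}.
\]
The supremum is finite by Lemma \ref{infty bound}, and this is where the stronger hypothesis $2\lambda_1-2\lambda_2-8\lambda_3-1>0$ is used. Hence $P_t\delta_0$ is Cauchy as $t\to\infty$ and converges to some $\mu^*\in\mathcal P_2(\mathbb R^d)$.

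Invariance of $\mu^*$ follows from continuity of $P_s$, which is itself a consequence of \eqref{**}:
\[
\mathbb W_2(P_s\mu^*,\mu^*)\le \mathbb W_2(P_s\mu^*,P_{s+t}\delta_0)+\mathbb W_2(P_{t+s}\delta_0,\mu^*)\to0 \quad\text{as } t\to\infty .
\]
Uniqueness and the stated convergence rate then come from applying \eqref{**} with $\sigma=\mu^*$: for any two invariant measures the distance contracts to zero, and $\mathbb W_2^2(P_t\mu,\mu^*)=\mathbb W_2^2(P_t\mu,P_t\mu^*)\le e^{-\lambda^*t}\mathbb W_2^2(\mu,\mu^*)$.

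The main obstacle is justifying the It\^o computation for the jump part and the vanishing of the expectations of the stochastic integrals; this is routine given finite second moments and $\nu(Z)<\infty$. The substantive point is simply that the coupling bound lets the measure-dependent terms be absorbed into the dissipative term.
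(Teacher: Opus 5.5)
Your proposal is correct and follows the paper's proof essentially step by step. It uses the same synchronous coupling from an optimal initial coupling, the same It\^o estimate with total coefficient $\lambda-2\lambda_1+2\lambda_2+4\lambda_3$ (so $\lambda^*=2\lambda_1-2\lambda_2-4\lambda_3$), and the same $\mathbb{W}_2$-Cauchy argument via the semigroup property and Lemma \ref{infty bound}. The differences are minor: you start the Cauchy argument from $\delta_0$ rather than a general $\mu$, and you state uniqueness of the invariant measure explicitly, whereas the paper leaves it implicit in the final contraction estimate.
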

 \begin{proof}
 	(1) Let  $x(t)$ and $y(t)$ be two solutions to MV-SDE \eqref{eq1.1}
 	such that $\mathcal{L}(x(0))=\mu$, $\mathcal{L}(y(0))=\sigma$ and
 	\begin{align}\label{eq4.1}
 		\mathbb{W}_{2}^{2}(\mu,\,\sigma)=\mathbb{E}| x(0)-y(0)|^{2}.
 	\end{align}
 	
 	 For any $\lambda>0$, using   Itô's formula, Assumptions \ref{dissipative} and \ref{g}, one has 
 	\begin{align*}
 		&\mathbb{E}\left( 	e^{\lambda t}|x(t)-y(t)|^{2}\right) \\
 		\leq& \mathbb{E}|x(0)-y(0)|^{2}
 		+\!\mathbb{E}\bigg[ \int_{0}^{t}\! e^{\lambda s}\left((\lambda-\!2\lambda_{1}+2\lambda_3) |x(s)-\!y(s)|^{2}+2(\lambda_{2}+\lambda_3)\mathbb{W}^{2}_{2}(\mu_s^x,\mu_s^y)\right) \mathrm{d}s\bigg] \\
 		\leq &\mathbb{E}|x(0)-y(0)|^{2}\!+\!\left(\lambda-2\lambda_{1}+2\lambda_{2}+4\lambda_3\right) \int_{0}^{t}\! e^{\lambda s}\mathbb{E} |x(s)-\!y(s)|^{2} \mathrm{d}s,
 	\end{align*}
 	where we also use $\mathbb{E}\mathbb{W}_{2}^{2}(\mu_s^x,\mu_s^y)\leq\mathbb{E}|x(s)-y(s)|^{2}$. Since $2\lambda_1-2\lambda_2-8\lambda_{3}-1>0$, we can choose $\lambda^* > 0$ such that $\lambda^*-2\lambda_{1} +2\lambda_{2}+4\lambda_3=0$. Take $\lambda=\lambda^*$, then for all $t \geq 0$,
 	\begin{align*}
 		\mathbb{W}_{2}^{2}(P_{t}\mu,\,P_{t}\sigma)\leq\mathbb{E}|x(t)-y(t)|^{2}\leq e^{-\lambda^* t}\mathbb{E}|x(0)-y(0)| ^{2}.
 	\end{align*}
 	Hence, the assertion follows from this inequality and \eqref{eq4.1}.
 	
 	(2) For any $\mu\in\mathcal{P}_{2}(\mathbb{R}^d) $, we first prove
 	\begin{align}\label{eq4.2}
 		\lim_{t\to\infty}\mathbb{W}_{2}(P_{t}\mu,\,\mu^*)=0
 	\end{align}
 	for some $\mu^*\in\mathcal{P}_{2}(\mathbb{R}^d)$. It suffices to show that $\left\lbrace P_{t}\mu\right\rbrace_{t\geq 0} $ is a $\mathbb{W}_{2} $-Cauchy family as $t\to\infty$, that is
 	\begin{align}\label{eq4.3}
 		\lim_{t\to\infty}\sup_{s\geq 0}\mathbb{W}_{2}(P_{t}\mu,\,P_{t+s}\mu)=0.
 	\end{align}
 	\par 	Let $x(0)\sim\mu$ and note that, for any $t,s\geq 0$, we have $P_{t+s}\mu=P_{t}(P_{s}\mu)$. Using \eqref{**},	 we obtain
 	\begin{align*}
 		\mathbb{W}^{2}_{2}(P_{t+s}\mu,\,P_{t}\mu)\leq& \mathrm{e}^{-\lambda^* t}\mathbb{W}^{2}_{2}(P_{s}\mu,\mu)\\
 		\leq&\mathrm{e}^{-\lambda^* t}\mathbb{E}| x(s)-x(0)| ^{2}\\
 		\leq & 2\mathrm{e}^{-\lambda^* t}\left(\mathbb{E}| x(s)|^{2} +\mathbb{E}| x(0)| ^{2}\right).
 	\end{align*}
 	Combining Lemma \ref{infty bound}, \eqref{eq4.3} holds. Because \(\mathcal{P}_{2}(\mathbb{R}^d)\) is complete under the metric $\mathbb{W}_2$, \eqref{eq4.2} holds.
 	Moreover, by \eqref{**} and \eqref{eq4.2}, we have
 	\[
 	\lim_{t \to \infty} \mathbb{W}_2( P_sP_t\mu,P_s\mu^*) = 0, \quad s \geq 0.
 	\]
 	Combining this with \eqref{eq4.2} and \eqref{eq4.3}, we obtain by triangle inequality that for any $s\geq 0,$ 
 	\[
 	\mathbb{W}_2(P_s\mu^*, \mu^*) \leq \lim_{t \to \infty} \mathbb{W}_2(P_sP_t\mu, P_t\mu) = \lim_{t \to \infty} \mathbb{W}_2(P_{t+s}\mu, P_t\mu) = 0.
 	\]
 	Then \(\mu^*\) is an invariant probability measure. Therefore from \eqref{**}, one can see that for \(\mu\in \mathcal{P}_2(\mathbb{R}^d)\)
 	\begin{align*}
 		\mathbb{W}_2^2(P_t\mu, \mu^*) =\mathbb{W}_2^2(P_t\mu, P_t\mu^*) \leq e^{-\lambda^*t} \mathbb{W}^2_2(\mu, \mu^*), \quad t \geq 0.
 	\end{align*}
 	This implies the exponential ergodic property of MV-SDE \eqref{eq1.1}.
 \end{proof}
 \section{Exponential ergodicity of the tamed Euler method}
 \par In this section, we study the numerical ergodicity of the tamed Euler method. For this purpose, we narrowly focus on the equation \eqref{2.1} with the drift term given by
 \begin{align}\label{b1b2}
 	b(x,\mu)=b_1(x)+b_2(x,\mu).
 \end{align}
 Suppose $b_1$ and $b_2$ satisfy the following assumption.
 \begin{assumption}\label{dos}
 	There exist constants $q,L_0, L_1,L_2,L_3,L_4,L_5>0$ such that for any $ x_1,x_2\in \mathbb{R}^d $ and $
 	\mu_1,\mu_2 \in \mathcal P_2(\mathbb{R}^d)  $,
 	\begin{align*}
 		\left|b_1(x_1)-b_1(x_2) \right| ^2\leq& L_0\left(1+\left|x_1 \right| ^{2q}+\left|x_2 \right| ^{2q} \right)\left| x_1-x_2\right|^2,\\
 		\left\langle x_1-x_2,b_1(x_1)-b_1(x_2) \right\rangle
 		\leq& -L_1\left(1+\left|x_1 \right| ^q+\left|x_2 \right| ^q \right)\left| x_1-x_2\right|^2,  \\
 		\left\langle x_1-\!x_2,b_1(x_1)\left|x_2 \right| ^q \!-b_1(x_2)\left|x_1 \right| ^q \right\rangle 
 		\leq &\left(  L_2\left(1+\!\left|x_1 \right| ^q+\!\left|x_2 \right| ^q \right)\!-L_3\left|x_1 \right| ^q\left|x_2 \right| ^q \right)  \left| x_1-\!x_2\right|^2,\\
 		\left|b_1(x_1)\left|x_2 \right| ^q \!-b_1(x_2)\left|x_1 \right| ^q\right| ^2
 		\leq& L_4\left(1+\!\left|x_1 \right| ^{2q}+\!\left|x_2 \right| ^{2q}+\!\left|x_1 \right| ^{2q}\left|x_2 \right| ^{2q} \right)\left| x_1-\!x_2\right|^2,\\
 		\left| b_2(x_1,\mu_1)-b_2(x_2,\mu_2)\right|^2 \leq&L_5\left( \left| x_1-x_2\right|^2+\mathbb{W}_2^2(\mu_1,\mu_2)\right) .
 	\end{align*}
 \end{assumption}
 \begin{remark}
 \rm The function $b_1(x)=-|x|^2x+x$ (with $x\in\mathbb{R}^d $) provides a concrete example satisfying all the conditions on  $b_1$ in Assumption \ref{dos}. For a detailed verification, see \cite{BAO2024}.
 \end{remark}
 
 \begin{remark}\label{remark}
 	\rm	It follows from Assumption \ref{dos} that
 	\begin{align*}
 		&\left\langle x_1-x_2,b(x_1,\mu_1)-b(x_2,\mu_2)\right\rangle \\
 		=&\left\langle x_1-x_2,b_1(x_1)-b_1(x_2)\right\rangle+\left\langle x_1-x_2,b_2(x_1,\mu_1)-b_2(x_2,\mu_2)\right\rangle\\
 		\leq &-L_{1}\left(1+\left|x_1 \right| ^q+\left|x_2 \right| ^q \right)\left|x_1-x_2 \right|^2+\frac{1}{2} \left|x_1-x_2 \right|^2\!+\frac{1}{2} L_5\left(\left| x_1-x_2\right|^2+\mathbb{W}^2_2(\mu_1,\mu_2)\right)\\
 		\leq &-\left( L_{1}-\frac{1}{2}-\frac{1}{2}L_5\right) \left|x_1-x_2 \right|^2+\frac{1}{2} L_5\mathbb{W}^2_2(\mu_1,\mu_2)
 	\end{align*} 
 	and
 	\begin{align*}
 		\left| b(x_1,\mu_1)-b(x_2,\mu_2)\right| ^2\leq 2(L_0\vee L_5)\left(1+\left|x_1 \right| ^{2q}+\left|x_2 \right| ^{2q} \right)\left| x_1-x_2\right|^2+2L_5\mathbb{W}^2_2(\mu_1,\mu_2).
 	\end{align*}
 	Hence, Assumption \ref{dissipative} is satisfied, and Theorem \ref{4.2} holds under Assumptions \ref{2.1}, \ref{dos} and \ref{g} with $L_1-L_5-4\lambda_3-1>0 $.
 \end{remark}

Let $ N\in \mathbb{N} $. For $i=1,2,\cdots,N$, let  $ (\xi^i,B^i,\widetilde{N}^i)$ be independent copies of $ (\xi,B,\widetilde{N}) $ and all $ (\xi^i,B^i,\widetilde{N}^i) $ are mutually independent and identically distributed (i.i.d). The corresponding interacting stochastic particle system (IPS) is
\begin{align}\label{eq2.7}
	\mathrm{d}x^{i,N}(t)=&b(x^{i,N}(t),\mu_t^{x,N})\mathrm{d}t+g(x^{i,N}(t),\mu_t^{x,N})\mathrm{d}B^i(t)+\int_{Z}\gamma(x^{i,N}(t),\mu_t^{x,N},z)\widetilde{N}^i(\mathrm{d}t,\mathrm{d}z)
\end{align} 
with initial value $x^{i,N}(0)=\xi^i$, where $ \mu_t^{x,N}:=\frac{1}{N}\sum_{j=1}^{N}\delta_{x^{j,N}(t)}$.   And the non-interacting particle system (NIPS) is
\begin{align}\label{eq2.8}
	\mathrm{d}x^i(t)=&b(x^i(t),\mu_t^{x^i})\mathrm{d}t+g(x^i(t),\mu_t^{x^i})\mathrm{d}B^i(t)+\int_{Z}\gamma(x^i(t),\mu_t^{x^i},z)\widetilde{N}^i(\mathrm{d}t,\mathrm{d}z)
\end{align}
with initial value $x^{i}(0)=\xi^i$. Since $x^i(t)$ are independent, $ \mu_t^{x^i}=\mu_t^{x} $ for every $i=1,2,\cdots,N $. Under Assumptions \ref{2.1},\ref{g} and \ref{dos}, the existence and uniqueness of the solution to the IPS \eqref{eq2.7} can be established (see \cite{Gyongy1980}). 

  Now, we are devoted to solving the IPS \eqref{eq2.7} numerically. Let $\Delta\in(0,1)$ be the stepsize and set $t_k=k\Delta$ for $k\in \mathbb{N}_0$. The discrete tamed Euler scheme is defined as follows:
 \begin{align}\label{discrete}
 	X^{i,N}(t_{k+1})=&X^{i,N}(t_{k})+ b_\Delta(X^{i,N}(t_{k}),\mu_{t_{k}}^{X,N})\Delta+ g(X^{i,N}(t_{k}),\mu_{t_{k}}^{X,N})\Delta B^i_k\notag\\
 	&+\int_{t_{k}}^{t_{k+1}}\int_{Z} \gamma(X^{i,N}(t_{k}),\mu_{t_{k}}^{X,N},z)\widetilde{N}^i(\mathrm{d}s,\mathrm{d}z)
 \end{align}
 almost surely for all $i\in\left\lbrace1,\cdots,N \right\rbrace $ and $k\in \mathbb{N}_0$, where $X^{i,N}(0)=\xi^i$ and $\Delta B^i_k=B^i(t_{k+1})-B^i(t_k) $. $b_\Delta$ is defined by 
 \begin{align}\label{newtamefuction}
 	b_\Delta(x,\mu):=	\frac{b_1(x)}{1+\sqrt{\Delta}\left|x \right| ^q}+b_2(x,\mu),
 \end{align}
 where $q$ is defined in Assumption \ref{dos}.
 
 For  $\mu\in\mathcal{P}_2(\mathbb{R}^d) $, define $P_{t_k}^{\Delta,N}\mu :=\mathcal{L}(X^{i,N}(t_k)) $, $i=1,\cdots, N$, where $\{X^{i,N}(0)\}_{i=1}^N$ are i.i.d. random variables with same  law $\mu$.
 For the tamed Euler scheme \eqref{discrete} associated with the IPS, we consider the invariant distribution 
 on \((\mathbb{R}^d)^N\) and introduce the joint law as
 \[
 \mathbf{P}_{t_k}^{\Delta,N} \mu^{\otimes N} 
 := \mathcal{L}\big( \left( X^{1,N}(t_k),\dots,X^{N,N}(t_k)\right)  \big),
 \]
 where  \(\mu^{\otimes N}\) is the \(N\)-tensorised initial distribution \(\mu\) and \(\mathcal{L}\big( \left( X^{1,N}(0),\dots,X^{N,N}(0)\right)  \big)=\mu^{\otimes N}\).
 
 In order to show the existence and uniqueness of the numerical invariant measure, we prepare some lemmas on the
 moment boundedness and exponential contractivity.
 \begin{lemma}\label{new1}
 	Let Assumptions \ref{2.1},\ref{g} and \ref{dos} hold, and suppose further $\rho:= L_1-1-6L_5-2\left( m+1\right) \lambda_{3}>0. $ Then,
 	 for any $\Delta\leq 1 \land \left(\frac{\rho}{4L_0} \right) ^2\land \frac{1}{\rho}$,
 	\begin{align*}
 		\sup_{k\in \mathbb{N}_0}\mathbb{E}\left| X^{i,N}_{t_k}\right| ^2\leq C.
 	\end{align*}
 \end{lemma}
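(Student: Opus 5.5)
Fix $i$ and write $X_k:=X^{i,N}(t_k)$, $\mu_k:=\mu^{X,N}_{t_k}$. The plan is a one-step mean-square recursion of the form
\[
\mathbb{E}|X_{k+1}|^2\le(1-c\Delta)\,\mathbb{E}|X_k|^2+C\Delta
\]
with $c>0$ independent of $\Delta$ and $N$; iterating it gives $\sup_k\mathbb{E}|X_k|^2\le\mathbb{E}|\xi|^2+C/c$.

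\textbf{Step 1: expand one step.} Squaring \eqref{discrete} and taking expectations, the Brownian and compensated-Poisson increments are mean zero and independent of $\mathcal{F}_{t_k}$, so the cross terms involving them vanish. By Itô's isometry,
\[
\mathbb{E}|X_{k+1}|^2=\mathbb{E}\Big[|X_k|^2+2\Delta\langle X_k,b_\Delta(X_k,\mu_k)\rangle+\Delta^2|b_\Delta(X_k,\mu_k)|^2+\Delta|g(X_k,\mu_k)|^2+\Delta\!\int_Z|\gamma(X_k,\mu_k,z)|^2\nu(\mathrm{d}z)\Big].
\]
The diffusion and jump terms are handled by \eqref{5.2}, together with
\[
\mathbb{E}\,\mathbb{W}_2^2(\mu_k,\delta_0)=\frac1N\sum_j\mathbb{E}|X^{j,N}(t_k)|^2=\mathbb{E}|X_k|^2,
\]
which holds by exchangeability of the particles. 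The measure terms thus contribute at most $4\lambda_3\Delta\,\mathbb{E}|X_k|^2+C\Delta$. A cruder bound that uses the trace with $m$ columns may produce the factor $(m+1)\lambda_3$ appearing in $\rho$.

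\textbf{Step 2: drift inner product, using taming.} Split $b_\Delta=\frac{b_1}{1+\sqrt\Delta|x|^q}+b_2$. Apply the monotonicity condition in Assumption \ref{dos} with $x_2=0$ and Young's inequality:
\[
\langle x,b_1(x)\rangle\le -L_1(1+|x|^q)|x|^2+\tfrac12|x|^2+\tfrac12|b_1(0)|^2 .
\]
Since $(1+|x|^q)/(1+\sqrt\Delta|x|^q)\ge1$ for $\Delta\le1$, this gives
\[
\frac{\langle x,b_1(x)\rangle}{1+\sqrt\Delta|x|^q}\le -L_1|x|^2-\frac{L_1|x|^{q+2}}{1+\sqrt\Delta|x|^q}\cdot(\text{remaining factor})+\tfrac12|x|^2+C .
\]
The $b_2$ part is controlled by the Lipschitz bound: $2\langle x,b_2(x,\mu)\rangle\le(1+2L_5)|x|^2+2L_5\mathbb{W}_2^2(\mu,\delta_0)+C$. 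Keep a negative term of the form $-\frac{2L_1\Delta|x|^q|x|^2}{1+\sqrt\Delta|x|^q}$ in reserve for Step 3.

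\textbf{Step 3: the $\Delta^2|b_\Delta|^2$ term (main obstacle).} Use the growth bound $|b_1(x)|^2\le 2L_0(1+|x|^{2q})|x|^2+C$. Then
\[
\Delta^2\frac{|b_1(x)|^2}{(1+\sqrt\Delta|x|^q)^2}\le 2L_0\Delta^2|x|^2+C\Delta^2+2L_0\Delta^{2}\frac{|x|^{2q}|x|^2}{(1+\sqrt\Delta|x|^q)^2}.
\]
For the last piece, use $\frac{\sqrt\Delta|x|^q}{1+\sqrt\Delta|x|^q}\le1$ to bound it by $2L_0\Delta^{3/2}\frac{|x|^q|x|^2}{1+\sqrt\Delta|x|^q}$. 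This must be absorbed by the reserved negative term $2L_1\Delta\frac{|x|^q|x|^2}{1+\sqrt\Delta|x|^q}$, which requires $\sqrt\Delta\lesssim L_1/L_0$. That condition is where the hypothesis $\Delta\le(\rho/(4L_0))^2$ enters, via $\rho\le L_1$.

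The contributions $\Delta^2|b_2|^2\le 2L_5\Delta^2(|x|^2+\mathbb{W}_2^2)+C\Delta^2$ and the $2L_0\Delta^2|x|^2$ term are small because $\Delta\le1$ and $\Delta\le(\rho/4L_0)^2$. Together they give a coefficient of at most about $\rho/2$ times $\Delta\,\mathbb{E}|X_k|^2$. The delicate part is the bookkeeping, so that the final coefficient is $\le -c\Delta$ with $c\approx\rho$ up to these absorbed pieces, consistent with the constants $1$, $6L_5$ and $2(m+1)\lambda_3$ in $\rho$.

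\textbf{Step 4: iterate.} Collecting the three steps gives $\mathbb{E}|X_{k+1}|^2\le(1-c\Delta)\mathbb{E}|X_k|^2+C\Delta$ with $c\approx\rho$. The condition $\Delta\le1/\rho$ ensures $1-c\Delta\in[0,1)$. Iterating,
\[
\mathbb{E}|X_k|^2\le\mathbb{E}|\xi|^2+\frac{C}{c},
\]
uniformly in $k$, $N$ and $\Delta$.
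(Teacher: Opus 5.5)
Your proposal follows essentially the same route as the paper. It expands one step of the scheme, removes the Brownian and compensated-Poisson cross terms by conditioning on $\mathcal{F}_{t_k}$, uses the taming to absorb $\Delta^2|b_\Delta|^2$ into the dissipative term, replaces $\mathbb{E}\,\mathbb{W}_2^2(\mu_{t_k}^{X,N},\delta_0)$ by $\mathbb{E}|X^{i,N}(t_k)|^2$ since the particles are identically distributed, and iterates $\mathbb{E}|X^{i,N}(t_{k+1})|^2\le(1-\rho\Delta)\,\mathbb{E}|X^{i,N}(t_k)|^2+C\Delta$. The only difference is in Step 3, where the paper bounds the tamed square by $4L_0\sqrt{\Delta}\,\frac{1+|x|^q}{1+\sqrt{\Delta}|x|^q}|x|^2$ so that it merges directly with $-2L_1\frac{1+|x|^q}{1+\sqrt{\Delta}|x|^q}|x|^2$, whereas your split into $-2L_1\Delta|x|^2$ plus a reserved $|x|^{q+2}$ term gives the reserve coefficient $2L_1(1-\sqrt{\Delta})$, not $2L_1$; this is harmless because $L_0\ge L_1^2$ (Cauchy--Schwarz on Assumption~\ref{dos}) gives $\sqrt{\Delta}\le\rho/(4L_0)<1/4$, and the constants then close with $c=\rho$.
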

 \begin{proof}
 	From scheme \eqref{discrete}, it can be seen that
 	 \begin{align}\label{*}
 		\left| X^{i,N}(t_{k+1}) \right|^2 
 		= 	&\left| X^{i,N}(t_{k})\right|^2 + \bigg( 2\left\langle X^{i,N}(t_{k}), b_{\Delta}\left( X^{i,N}(t_{k}),\mu_{t_k}^{X,N}\right)  \right\rangle \notag\\
 		&+ \!\left| b_{\Delta}\left( X^{i,N}(t_{k}),\mu_{t_k}^{X,N}\right) \right| ^2 \Delta\bigg) \Delta\! + \!2 \left\langle X^{i,N} (t_{k})+\! b_{\Delta}\left( X^{i,N} (t_{k}),\mu_{t_k}^{X,N}\right)\!  \Delta,\right.\notag\\
 		&\left. g\left( X^{i,N}(t_{k}),\mu_{t_k}^{X,N}\right)  \Delta B^i_k+\int_{t_{k}}^{t_{k+1}}\!\int_{Z}\! \gamma\left( X^{i,N}(t_{k}),\mu_{t_k}^{X,N},z\right) \widetilde{N}^i(\mathrm{d}s,\mathrm{d}z)\right\rangle\notag\\
 		&+\!\left| g\left( X^{i,N}(t_{k}),\mu_{t_k}^{X,N}\right) \Delta B^i_k+\!\int_{t_{k}}^{t_{k+1}}\int_{Z} \gamma\left( X^{i,N}(t_{k}),\mu_{t_k}^{X,N},z\right) \widetilde{N}^i(\mathrm{d}s,\mathrm{d}z)\right|^2\notag\\
 		=:& \left| X^{i,N} (t_{k}) \right|^2 + I_1\Delta+I_2.
 	\end{align}
 	By using Assumption \ref{dos} and the Cauchy-Schwarz inequality, we derive that for any $\Delta\leq \left(\frac{\rho}{4L_0} \right) ^2$,
 	\begin{align*}
 		I_1\Delta 
 		\leq &\bigg[ 2\frac{-L_1(1+\left| X^{i,N}(t_{k})\right|^q)\left| X^{i,N}(t_{k})\right|^2+\left\langle X^{i,N}(t_{k}),b_1(0) \right\rangle }{1+\sqrt{\Delta}\left| X^{i,N}(t_{k})\right|^q}+\left| X^{i,N}(t_{k})\right|^2+2\left|b_2(0,\delta_0) \right|^2\\
 		&\quad+2L_5\left(\left| X^{i,N}(t_{k})\right|^2+\mathbb{W}_2^2(\mu_{t_k}^{X,N},\delta_0) \right) +\frac{4L_0(1+\left| X^{i,N}(t_{k})\right|^{2q})\left| X^{i,N}(t_{k})\right|^2 }{\left( 1+\sqrt{\Delta}\left| X^{i,N}(t_{k})\right|^q\right) ^2}\Delta\\
 		&\quad+4\left|b_1(0) \right| ^2\Delta+4L_5\left(\left| X^{i,N}(t_{k})\right|^2+\mathbb{W}_2^2(\mu_{t_k}^{X,N},\delta_0) \right)\Delta+4\left|b_2(0,\delta_0) \right|^2\Delta\bigg] \Delta\\
 		\leq&\bigg[ -2\left(L_1-2L_0\sqrt{\Delta} \right) \frac{1+\left| X^{i,N}(t_{k})\right|^q}{1+\sqrt{\Delta}\left| X^{i,N}(t_{k})\right|^q}\left| X^{i,N}(t_{k})\right|^2+2(1+3L_5)\left| X^{i,N}(t_{k})\right|^2\\
 		&\quad+6L_5\mathbb{W}_2^2(\mu_{t_k}^{X,N},\delta_0)+C\bigg] \Delta\\
 		\leq&\left[  -2\left(L_1-2L_0\sqrt{\Delta}-1-3L_5 \right)\left| X^{i,N}(t_{k})\right|^2+6L_5\mathbb{W}_2^2(\mu_{t_k}^{X,N},\delta_0)+C\right]  \Delta,
 	\end{align*}
 	where $C=5\left|b_1(0) \right|^2+6\left|b_2(0,\delta_0) \right|^2$ and $L_1-2L_0\sqrt{\Delta}\geq 0 $.
 	Substituting this into \eqref{*}, we deduce that
 	\begin{align*}
 		\left| X^{i,N}(t_{k+1}) \right|^2
 	\leq&\left[1-2\left(L_1-2L_0\sqrt{\Delta}-1-3L_5 \right)\Delta \right] \left| X^{i,N} (t_{k}) \right|^2\\
 	&+6L_5\Delta\mathbb{W}_2^2(\mu_{t_k}^{X,N},\delta_0)+ C\Delta+I_2.
 	\end{align*}
 	\par  	Taking conditional expectation on both sides and applying \eqref{5.2} yield that
 	\begin{align*}
 		\mathbb{E}\left(\big|X^{i,N} (t_{k+1})\big|^{2} \big| \mathcal{F}_{t_{k}}\right)
 				\leq& \left[1-2\left(L_1-2L_0\sqrt{\Delta}-1-3L_5-\left( m+1\right) \lambda_{3} \right)\Delta \right] \left| X^{i,N} (t_{k}) \right|^2\\
 				&+2\left( 3L_5+\left( m+1\right) \lambda_{3}\right) \Delta \mathbb{W}_2^2(\mu_{t_k}^{X,N},\delta_0)+C\Delta,
 	\end{align*} 
 	where we use 
 	\begin{equation*}
 		\begin{split}
 			&\mathbb{E}\left( \triangle B^{i}_k\middle| \mathcal{F}_{t_{k}}\right)  = 0,\quad
 			\mathbb{E}\left( |\triangle B^{i}_k|^{2} \middle|\mathcal{F}_{t_{k}}\right)  =m \Delta,\\
 			&\mathbb{E}\left(\int_{t_{k}}^{t_{k+1}}\int_{Z} \gamma\left( X^{i,N}(t_{k}),\mu_{t_k}^{X,N},z\right) \widetilde{N}^i(\mathrm{d}s,\mathrm{d}z) \middle| \mathcal{F}_{t_{k}}\right)  =0,\\
 			&\mathbb{E}\left(\left| \int_{t_{k}}^{t_{k+1}}\int_{Z} \gamma\left( X^{i,N}(t_{k}),\mu_{t_k}^{X,N},z\right) \widetilde{N}^i(\mathrm{d}s,\mathrm{d}z)\right| ^2 \middle| \mathcal{F}_{t_{k}}\right) \\
 			=& \int_{Z}\left| \gamma\left( X^{i,N}(t_{k}),\mu_{t_k}^{X,N},z\right)\right| ^2  \nu(\mathrm{d}z)\Delta.
 		\end{split}
 	\end{equation*}
 Since $\mathbb{E}\mathbb{W}_2^2(\mu_{t_k}^{X,N},\delta_0)\leq \mathbb{E}\left( \frac{1}{N}\sum_{j=1}^{N}\left| X^{j,N}(t_k)\right|^2 \right)$ and $ X^{i,N}(t_k), i=1,2,\cdots, N  $ are identically distributed, taking expectations again leads to
 \begin{align*}
 	\mathbb{E}\big|X^{i,N} (t_{k+1})\big|^{2}\leq
 \left[1-2\left(\rho-2L_0\sqrt{\Delta}\right)\Delta \right]\mathbb{E}\left| X^{i,N} (t_{k}) \right|^2+C\Delta,
 \end{align*} 
 where $\rho=L_1-1-6L_5-2\left( m+1\right) \lambda_{3} >0$. Due to $\Delta\leq 1 \land \left(\frac{\rho}{4L_0} \right) ^2\land \frac{1}{\rho}$,  $ \rho-2L_0\sqrt{\Delta}\geq \frac{\rho}{2}>0 $,  and for any $k\in \mathbb{N}_0$,
 	\begin{align*}
 		\mathbb{E}\big|X^{i,N} (t_{k+1})\big|^{2}\leq&\left( 1-\rho\Delta \right) \mathbb{E} \left| X^{i,N} (t_{k}) \right|^2+C\Delta \\
 		\leq&\left( 1-\rho\Delta \right)^{k+1} \mathbb{E} \left| X^{i,N} (0) \right|^2+C\Delta\sum_{j=0}^{k} \left( 1-\rho\Delta \right)^j\\
 		\leq & \mathbb{E}\left|  \xi^i\right| ^2+ \frac{C}{\rho}.
 	\end{align*}
 	This completes the
 	proof.
 \end{proof}

 Next, before establishing the exponential contractivity in the $ L^2 $ sense for different initial distributions, we need the following lemma. 
  \begin{lemma}\label{reis}
 	Let Assumption  \ref{dos} hold. Then
 	\begin{align*}
 &	2\left\langle x_1-\!x_2,b_\Delta(x_1,\mu_1)-\!b_\Delta(x_2,\mu_2) \right\rangle+\left|b_\Delta(x_1,\mu_1)\!-b_\Delta(x_2,\mu_2) \right| ^2\Delta\\
 		\leq& -\left( (L_1 \land 2L_3)-4(3L_0+4L_4)\sqrt{\Delta}-1-3L_5\right) |x_{1}\!-x_{2}|^{2}+3L_5\mathbb{W}_2^2(\mu_1,\mu_2)
 	\end{align*}
 for any $ x_1,x_2\in \mathbb{R}^d $, $
 \mu_1,\mu_2 \in \mathcal P_2(\mathbb{R}^d)  $
 	and $\Delta\leq 1\land \left(\frac{L_1}{2L_2} \right)^2$.
 \end{lemma}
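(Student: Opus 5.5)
The plan is to split $b_\Delta$ into its tamed part $b_1(x)/(1+\sqrt{\Delta}|x|^q)$ and its Lipschitz part $b_2$. The $b_1$ contribution supplies a negative term of the form (weight)$\times|x_1-x_2|^2$, and the $b_2$ contribution is handled by Young's inequality. Write $D_i:=1+\sqrt{\Delta}|x_i|^q$, $\delta:=x_1-x_2$, and $S:=1+|x_1|^q+|x_2|^q$.

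The first step is the algebraic identity
\begin{align*}
\frac{b_1(x_1)}{D_1}-\frac{b_1(x_2)}{D_2}=\frac{\big(b_1(x_1)-b_1(x_2)\big)+\sqrt{\Delta}\big(b_1(x_1)|x_2|^q-b_1(x_2)|x_1|^q\big)}{D_1D_2},
\end{align*}
obtained by expanding $b_1(x_1)D_2-b_1(x_2)D_1$. Taking the inner product with $\delta$ and using the second and third inequalities of Assumption \ref{dos} gives the bound
\[
\frac{-L_1S+\sqrt{\Delta}L_2S-\sqrt{\Delta}L_3|x_1|^q|x_2|^q}{D_1D_2}\,|\delta|^2 .
\]
Under $\sqrt{\Delta}\le L_1/(2L_2)$ the numerator is at most $-\tfrac{L_1}{2}S-\sqrt{\Delta}L_3|x_1|^q|x_2|^q$. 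Since $\Delta\le1$, the expansion $D_1D_2=1+\sqrt{\Delta}(|x_1|^q+|x_2|^q)+\Delta|x_1|^q|x_2|^q$ is dominated termwise, up to the factor $\tfrac{L_1}{2}\wedge L_3$, by that numerator. This yields the key estimate
\[
2\langle\delta,\cdot\rangle\le-(L_1\wedge 2L_3)|\delta|^2 .
\]
In fact I would keep the sharper weighted form $-\tfrac{L_1}{2}\cdot 2S/(D_1D_2)\,|\delta|^2$ as well, because the next step must absorb into it.

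The second step, which I expect to be the main obstacle, is the quadratic term $\Delta|b_\Delta(x_1,\mu_1)-b_\Delta(x_2,\mu_2)|^2$. Using $(a+b)^2\le2a^2+2b^2$ on the identity above together with the first and fourth inequalities of Assumption \ref{dos}, the $b_1$-part is bounded by
\[
\frac{2L_0(1+|x_1|^{2q}+|x_2|^{2q})\Delta+2L_4\Delta^2(1+|x_1|^{2q}+|x_2|^{2q}+|x_1|^{2q}|x_2|^{2q})}{D_1^2D_2^2}\,|\delta|^2 .
\]
The naive bound $\Delta|x|^{2q}/D^2\le1$ is not small. I would instead write $\Delta|x_i|^{2q}/D_i^2\le\sqrt{\Delta}\cdot\sqrt{\Delta}|x_i|^q/D_i\cdot|x_i|^q/D_i$ and use $\sqrt{\Delta}|x_i|^q\le D_i$, so that each such term is at most $\sqrt{\Delta}\,S/(D_1D_2)$ times a constant. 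The mixed term $\Delta^2|x_1|^{2q}|x_2|^{2q}/(D_1D_2)^2$ is handled the same way, splitting off one factor $\sqrt{\Delta}$. The outcome should be a bound
\[
C(L_0+L_4)\sqrt{\Delta}\,\frac{S}{D_1D_2}|\delta|^2 ,
\]
which combines with the drift term into $-(L_1-c\sqrt{\Delta})S/(D_1D_2)$-type expressions, in the same way as in the proof of Lemma \ref{new1}. After that one passes to the lower bound $L_1\wedge2L_3$ as in the first step, at the cost $4(3L_0+4L_4)\sqrt{\Delta}$. Tracking the exact constants $12L_0+16L_4$ is bookkeeping: the factor $3$ comes from splitting $b_\Delta$ into three pieces with $(a+b+c)^2\le3(a^2+b^2+c^2)$, and I would not fix these constants in advance.

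The last step treats the $b_2$ terms. First, $2\langle\delta,b_2(x_1,\mu_1)-b_2(x_2,\mu_2)\rangle\le|\delta|^2+L_5(|\delta|^2+\mathbb{W}_2^2(\mu_1,\mu_2))$. Second, the cross and square terms in the quadratic part contribute at most $2L_5(|\delta|^2+\mathbb{W}_2^2)\Delta$, using $\Delta\le1$. Together these give the terms $(1+3L_5)|\delta|^2+3L_5\mathbb{W}_2^2(\mu_1,\mu_2)$, and collecting everything yields the claimed inequality.
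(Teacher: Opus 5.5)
Your overall architecture matches the paper's: the common-denominator identity for the tamed part, the four $b_1$-conditions, absorbing an $O(\sqrt{\Delta})$ quadratic term into the dissipative weight, then bounding that weight from below. However, the weight you keep, $S/(D_1D_2)$, is too small, and Step 2 fails as stated. Your treatment of the $L_0$-terms and of the first three $L_4$-terms is fine. The mixed contribution
\[
\frac{2L_4\Delta^2|x_1|^{2q}|x_2|^{2q}}{D_1^2D_2^2}\,|x_1-x_2|^2
=2L_4\Big(\frac{\sqrt{\Delta}|x_1|^q}{D_1}\Big)^2\Big(\frac{\sqrt{\Delta}|x_2|^q}{D_2}\Big)^2|x_1-x_2|^2
\]
tends to $2L_4|x_1-x_2|^2$ as $|x_1|,|x_2|\to\infty$, whereas $\sqrt{\Delta}\,S/(D_1D_2)\to0$. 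So the bound $C(L_0+L_4)\sqrt{\Delta}\,\frac{S}{D_1D_2}|x_1-x_2|^2$ cannot come out of Assumption \ref{dos}, and it is in fact false. For $b_1(x)=-|x|^2x+x$ one has $b_1(x)/(1+\sqrt{\Delta}|x|^2)=-x/\sqrt{\Delta}+O(\Delta^{-1}|x|^{-1})$, so the $b_1$-part of the quadratic term tends to $|x_1-x_2|^2$ for large $|x_1|,|x_2|$ with $x_1-x_2$ fixed.

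``Splitting off one factor $\sqrt{\Delta}$'' really gives $\sqrt{\Delta}\cdot\sqrt{\Delta}|x_1|^q|x_2|^q/(D_1D_2)$. That is $\sqrt{\Delta}$ times the $L_3$-part of your drift numerator, not a multiple of $S/(D_1D_2)$. The same defect blocks your last move: $S/(D_1D_2)$ has no positive lower bound, because the top-order term $\Delta|x_1|^q|x_2|^q$ of $D_1D_2$ is matched only by $\sqrt{\Delta}|x_1|^q|x_2|^q$. Hence an expression $-(L_1-c\sqrt{\Delta})S/(D_1D_2)$ cannot be turned into a negative constant. This is exactly where the analogy with the one-point estimate of Lemma \ref{new1}, where $(1+|x|^q)/(1+\sqrt{\Delta}|x|^q)\ge1$, breaks down.

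The repair is what the paper does:
\begin{enumerate}
\item Your first-step numerator already dominates $(\tfrac{L_1}{2}\wedge L_3)\big(S+\sqrt{\Delta}|x_1|^q|x_2|^q\big)$. So keep the full weight $W:=\big(S+\sqrt{\Delta}|x_1|^q|x_2|^q\big)/(D_1D_2)$ with coefficient $L_1\wedge2L_3$, instead of discarding the $L_3$-term.
\item Bound every piece of the quadratic $b_1$-part by a multiple of $\sqrt{\Delta}\,W|x_1-x_2|^2$, using $\Delta a^{2q}\le\sqrt{\Delta}a^q(1+\sqrt{\Delta}a^q)$ and $\Delta^2\le\Delta$. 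The mixed term goes into the $\sqrt{\Delta}|x_1|^q|x_2|^q$ part of $W$.
\item This gives $-\big((L_1\wedge2L_3)-4(3L_0+4L_4)\sqrt{\Delta}\big)W|x_1-x_2|^2$ for the $b_1$-contributions. Only then use $W\ge1$, which holds because $\Delta\le1$.
\end{enumerate}
Be aware that this final step needs $(L_1\wedge2L_3)-4(3L_0+4L_4)\sqrt{\Delta}\ge0$. That is not implied by $\Delta\le1\wedge(L_1/(2L_2))^2$, and without it the stated inequality can fail. The paper's proof uses this tacitly as well. It is guaranteed by the stronger step-size restriction in Lemma \ref{4.3}, which is where the lemma is applied.
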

 \begin{proof}
 	Due to \eqref{newtamefuction} and Assumption \ref{dos}, one observes that
 	\begin{align*} 
 		&	\left\langle x_1-x_2,b_\Delta(x_1,\mu_1)-b_\Delta(x_2,\mu_2) \right\rangle\\
 		\leq&\frac{-L_1\left(1+\!\left|x_1 \right| ^q+\!\left|x_2 \right| ^q \right)\left| x_1-\!x_2\right|^2  }{\left(  1+\!\sqrt{\Delta} \left| x_1\right|^q\right)\left(  1+\!\sqrt{\Delta}\left| x_2\right|^q \right)  } +\!\frac{\sqrt{\Delta}\left[ L_2\left(1+\!\left|x_1 \right| ^q+\!\left|x_2 \right| ^q \right)-\!L_3\left|x_1 \right| ^q\left|x_2 \right| ^q\right] \left| x_1\!-x_2\right|^2  }{\left(  1+\!\sqrt{\Delta} \left| x_1\right|^q \right) \left(  1+\!\sqrt{\Delta} \left| x_2\right|^q\right)  }\\
 		&+\frac{1}{2}\left| x_1-x_2\right|^2+\frac{L_5}{2}\left(\left| x_1-\!x_2\right|^2+\mathbb{W}_2^2(\mu_1,\mu_2) \right) \\
 		\leq &-\frac{\left( L_1-\sqrt{\Delta}L_2\right) \left(1+\left|x_1 \right| ^q+\left|x_2 \right| ^q \right)+L_3\sqrt{\Delta}\left|x_1 \right| ^q\left|x_2 \right| ^q }{ \left(  1+\!\sqrt{\Delta} \left| x_1\right|^q\right)\left(  1+\!\sqrt{\Delta}\left| x_2\right|^q \right)  }\left| x_1-x_2\right|^2+\frac{1+L_5}{2}\left| x_1-x_2\right|^2\\
 		&+\frac{L_5}{2}\mathbb{W}_2^2(\mu_1,\mu_2)\\
 		\leq &-\frac{\left( \frac{L_1}{2} \land L_3\right)  \left(1+\left|x_1 \right| ^q+\left|x_2 \right| ^q +\sqrt{\Delta}\left|x_1 \right| ^q\left|x_2 \right| ^q\right) }{ \left(  1+\!\sqrt{\Delta} \left| x_1\right|^q\right)\left(  1+\!\sqrt{\Delta}\left| x_2\right|^q \right)  }\left| x_1-x_2\right|^2+\frac{1+L_5}{2}\left| x_1-x_2\right|^2\\
 		&+\frac{L_5}{2}\mathbb{W}_2^2(\mu_1,\mu_2),
 	\end{align*}
 	where we use $\sqrt{\Delta}\leq \frac{L_1}{2L_2}  $. Furthermore,
 	\begin{align*} 
 		&\left|b_\Delta(x_1,\mu_1)-b_\Delta(x_2,\mu_2)\right| ^2\Delta\\
 		\leq&\Bigg[ \frac{4\left|b_1(x_1)\!-b_1(x_2) \right| ^2+\!4\Delta\left|b_1(x_1)\left|x_2 \right| ^q \!-b_1(x_2)\left|x_1 \right| ^q\right| ^2 }{\left| \left(  1+\!\sqrt{\Delta}\left| x_1\right|^q\right) \left(  1+\!\sqrt{\Delta}\left| x_2\right|^q \right) \right| ^2 }\!+2L_5\left(\left| x_1-x_2\right|^2+\mathbb{W}_2^2(\mu_1,\mu_2) \right)\Bigg] \Delta\\
 	\leq&\left[ \frac{4L_0 \left( 1+\left| x_1\right|^{2q}+\left| x_2\right|^{2q}\right)\left| x_1-x_2\right|^2 }{  \left| \left(  1+\!\sqrt{\Delta}\left| x_1\right|^q\right) \left(  1+\!\sqrt{\Delta}\left| x_2\right|^q \right)\right| ^2}+2L_5\left(\left| x_1-x_2\right|^2+\mathbb{W}_2^2(\mu_1,\mu_2) \right)\right.\\
 		&\quad\left.+\frac{4L_4\Delta\left(1+\left|x_1 \right| ^{2q}+\left|x_2 \right| ^{2q}+\left| x_1\right|^{2q}\left| x_2\right|^{2q} \right)\left| x_1-x_2\right|^2  }{ \left| \left(  1+\!\sqrt{\Delta}\left| x_1\right|^q\right) \left(  1+\!\sqrt{\Delta}\left| x_2\right|^q \right)\right| ^2}\right] \Delta\\
 		\leq&\frac{12L_0 \sqrt{\Delta}\left( 1+\left| x_1\right|^{q}+\left| x_2\right|^{q}+\sqrt{\Delta}\left| x_1\right|^{q}\left| x_2\right|^{q}\right) }{ \left(  1+\!\sqrt{\Delta}\left| x_1\right|^q\right) \left(  1+\!\sqrt{\Delta}\left| x_2\right|^q \right)}\left| x_1-x_2\right|^2+2L_5\Delta\left(\left| x_1-x_2\right|^2+\mathbb{W}_2^2(\mu_1,\mu_2) \right)\\
 		&+\frac{16L_4\sqrt{\Delta}\left(1+\left|x_1 \right| ^{q}+\left|x_2 \right| ^{q}+\sqrt{\Delta}\left| x_1\right|^{q}\left| x_2\right|^{q} \right)}{  \left(  1+\!\sqrt{\Delta}\left| x_1\right|^q\right) \left(  1+\!\sqrt{\Delta}\left| x_2\right|^q \right)}\left| x_1-x_2\right|^2  \\
 		\leq&4(3L_0+4L_4)\sqrt{\Delta}\frac{1+\left| x_1\right|^{q}+\left| x_2\right|^{q}+\sqrt{\Delta}\left| x_1\right|^{q}\left| x_2\right|^{q} }{ \left(  1+\!\sqrt{\Delta}\left| x_1\right|^q\right) \left(  1+\!\sqrt{\Delta}\left| x_2\right|^q \right)}\left| x_1-x_2\right|^2\\
 		&+2L_5\left(\left| x_1-x_2\right|^2+\mathbb{W}_2^2(\mu_1,\mu_2) \right).
 	\end{align*}
 	In the above estimate, one uses $\Delta^2\leq \Delta $ and the inequality $\Delta a^{2q}\leq \sqrt{\Delta} a^q(1+\sqrt{\Delta} a^q)$ for all constant $a>0$. Finally, one can
 	observe
 	\begin{align*}
 		&2\left\langle x_1-\!x_2,b_\Delta(x_1,\mu_1)-\!b_\Delta(x_2,\mu_2) \right\rangle+\left|b_\Delta(x_1,\mu_1)\!-b_\Delta(x_2,\mu_2) \right| ^2\Delta\\
 		\leq&-\left((L_1\land 2L_3)-4(3L_0+4L_4)\sqrt{\Delta} \right) \frac{1+\left| x_1\right|^{q}+\left| x_2\right|^{q}+\sqrt{\Delta}\left| x_1\right|^{q}\left| x_2\right|^{q} }{ \left(  1+\!\sqrt{\Delta}\left| x_1\right|^q\right) \left(  1+\!\sqrt{\Delta}\left| x_2\right|^q \right)}\left| x_1-x_2\right|^2\\
 		&+(1+3L_5)\left| x_1-x_2\right|^2+3L_5\mathbb{W}_2^2(\mu_1,\mu_2)\\
 		\leq&-\left((L_1\land 2L_3)-4(3L_0+4L_4)\sqrt{\Delta} -1-3L_5\right)\left| x_1-x_2\right|^2+3L_5\mathbb{W}_2^2(\mu_1,\mu_2),
 	\end{align*}
 	which completes the proof.
 \end{proof}
 \begin{lemma}\label{4.3}  
 	Let Assumptions \ref{g}  and  \ref{dos} hold with $(L_1\land 2L_3) -1-6L_5-2(m+1)\lambda_3>0.$ 
 	There exists a constant $\bar{\lambda}^*>0$ such that for any initial distributions \(\mu,\ \sigma\in\mathcal{P}_{2}(\mathbb{R}^d)\) and $\Delta\leq 1\land \left(\frac{L_1}{2L_2} \right)^2\land \left( \frac{\bar{\lambda}^*}{8(3L_0+4L_4)}\right) ^2 \land \frac{1}{\bar{\lambda}^*}$,
 	\[
 	\mathbb{W}^{2}_{2}\left(P^{\Delta,N}_{t_{k}}\mu,P^{\Delta ,N}_{t_{k}}\sigma\right)\leq  \mathbb{W}^{2}_{2}\left(\mu,\sigma\right)e^{ -\bar{\lambda}^*t_{k}},\quad k\in \mathbb{N}_0.
 	\]
 \end{lemma}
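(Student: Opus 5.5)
We couple two copies of the tamed Euler scheme \eqref{discrete} driven by the same noise $(B^i,\widetilde N^i)_{i=1}^N$ and compare them step by step. Let $\{X^{i,N}(0)\}_{i=1}^N$ be i.i.d.\ with law $\mu$ and $\{Y^{i,N}(0)\}_{i=1}^N$ i.i.d.\ with law $\sigma$. Pair them so that, for each $i$, $(X^{i,N}(0),Y^{i,N}(0))$ is an optimal coupling: $\mathbb{E}|X^{i,N}(0)-Y^{i,N}(0)|^2=\mathbb{W}_2^2(\mu,\sigma)$. The pairs are independent across $i$ and independent of the noise.

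Write $e^i_k:=X^{i,N}(t_k)-Y^{i,N}(t_k)$. Expanding $|e^i_{k+1}|^2$ exactly as in \eqref{*} gives
\[
|e^i_{k+1}|^2=|e^i_k|^2+\Big(2\langle e^i_k,\Delta b^i_k\rangle+|\Delta b^i_k|^2\Delta\Big)\Delta+M^i_k+|\Delta g^i_k\,\Delta B^i_k+J^i_k|^2 .
\]
Here $\Delta b^i_k$ and $\Delta g^i_k$ are the differences of $b_\Delta$ and of $g$ at the two states, $J^i_k$ is the compensated Poisson increment of $\gamma$-differences, and $M^i_k$ is the cross term. The cross term has zero $\mathcal F_{t_k}$-conditional mean.

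Next take conditional expectations. The drift bracket is bounded by Lemma \ref{reis}, which requires $\Delta\le 1\wedge(L_1/2L_2)^2$. The quadratic noise term has conditional mean
\[
|\Delta g^i_k|^2 m\Delta+\int_Z|\Delta\gamma|^2\nu(\mathrm{d}z)\,\Delta\le (m+1)\lambda_3\big(|e^i_k|^2+\mathbb{W}_2^2(\mu^{X,N}_{t_k},\mu^{Y,N}_{t_k})\big)\Delta
\]
by Assumption \ref{g}. For the empirical measures we use the crude coupling bound $\mathbb{W}_2^2(\mu^{X,N}_{t_k},\mu^{Y,N}_{t_k})\le\frac1N\sum_j|e^j_k|^2$.

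Taking full expectations, all the $|e^i_k|$ are identically distributed by exchangeability of the coupled system, so $\mathbb{E}\frac1N\sum_j|e^j_k|^2=\mathbb{E}|e^i_k|^2$. This yields
\[
\mathbb{E}|e^i_{k+1}|^2\le\Big[1-\Big((L_1\wedge2L_3)-4(3L_0+4L_4)\sqrt\Delta-1-6L_5-2(m+1)\lambda_3\Big)\Delta\Big]\mathbb{E}|e^i_k|^2 ,
\]
up to the precise bookkeeping of the constants $3L_5$ and $(m+1)\lambda_3$, which the hypothesis absorbs. Set $\bar\lambda^{**}:=(L_1\wedge2L_3)-1-6L_5-2(m+1)\lambda_3>0$ and $\bar\lambda^*:=\bar\lambda^{**}/2$. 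The restriction $\Delta\le(\bar\lambda^*/8(3L_0+4L_4))^2$ gives $4(3L_0+4L_4)\sqrt\Delta\le\bar\lambda^*/2$. The contraction factor is therefore at most $1-\bar\lambda^*\Delta$, which lies in $[0,1)$ because $\Delta\le1/\bar\lambda^*$.

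Iterating the recursion and using $1-x\le e^{-x}$ gives $\mathbb{E}|e^i_k|^2\le e^{-\bar\lambda^*t_k}\mathbb{E}|e^i_0|^2=e^{-\bar\lambda^*t_k}\mathbb{W}_2^2(\mu,\sigma)$. Since $\mathbb{W}_2^2(P^{\Delta,N}_{t_k}\mu,P^{\Delta,N}_{t_k}\sigma)\le\mathbb{E}|e^i_k|^2$, the claim follows.

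The main obstacle is the measure term. The coupling must be arranged so that exchangeability lets $\frac1N\sum_j\mathbb{E}|e^j_k|^2$ collapse to $\mathbb{E}|e^i_k|^2$. This works because the initial pairs are i.i.d.\ and the dynamics are symmetric under permutations of the particles. The other delicate point is checking that the constant bookkeeping stays within the stated gap $(L_1\wedge2L_3)-1-6L_5-2(m+1)\lambda_3$. If the constants come out slightly larger, I would choose $\bar\lambda^*$ as a smaller fraction of that gap.
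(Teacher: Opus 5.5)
Your proposal is correct and follows the paper's proof essentially step for step. The shared steps are: a synchronous coupling with i.i.d.\ optimally coupled initial pairs, the conditional expectation of the squared one-step difference, Lemma \ref{reis} together with Assumption \ref{g}, the bound $\mathbb{W}_2^2(\mu^{X,N}_{t_k},\mu^{Y,N}_{t_k})\le\frac{1}{N}\sum_j|e^j_k|^2$ combined with exchangeability, and the same choice $\bar\lambda^*=\frac{1}{2}\big[(L_1\wedge 2L_3)-1-6L_5-2(m+1)\lambda_3\big]$ with the same step-size restrictions. Two small remarks: the constants come out exactly as you wrote ($3L_5+3L_5$ and $(m+1)\lambda_3+(m+1)\lambda_3$), so no hedging is needed; and appealing to exchangeability of the coupled pairs, so that the $e^j_k$ are identically distributed, is slightly more precise than the paper's remark that the $X^{i,N}(t_k)$ are identically distributed.
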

 
 \begin{proof} Let $X^{i,N}(t_k) $ and $Y^{i,N}(t_k)$ be the numerical solutions to scheme \eqref{discrete} such that \(\mathcal{L}(X^{i,N}(0))=\mu \), \(\mathcal{L}(Y^{i,N}(0))=\sigma \) and
 	\begin{align}\label{w}
 		\mathbb{W}^{2}_{2}\left(\mu,\sigma\right)=\mathbb{E}| X^{i,N}(0)-Y^{i,N}(0)|^2.
 	\end{align}
 	
 	\par For brevity, we define for any $k\in \mathbb{N}_0$,
 	\begin{align*}
 		F_\Delta(X^{i,N} (t_k), Y^{i,N} (t_k))\! =& b_\Delta(X^{i,N} (t_k), \mu_{t_k}^{X,N}) \!- b_\Delta(Y^{i,N} (t_k),\mu_{t_k}^{Y,N}),\\
 		G(X^{i,N} (t_k), Y^{i,N} (t_k)) \!= &g(X^{i,N} (t_k),\mu_{t_k}^{X,N}) \!- g(Y^{i,N} (t_k), \mu_{t_k}^{Y,N}),\\
 		\Gamma(X^{i,N} (t_k), Y^{i,N} (t_k)) \!=& \gamma(X^{i,N} (t_k),\mu_{t_k}^{X,N}, z) \!- \gamma(Y^{i,N} (t_k),\mu_{t_k}^{Y,N},z).
 	\end{align*}
 	It follows from scheme \eqref{discrete} that
 	\begin{align*}
 		X^{i,N} (t_{k+1}) - Y^{i,N} (t_{k+1}) =& X^{i,N} (t_{k}) - Y^{i,N} (t_{k}) + F_\Delta(X^{i,N} (t_{k}), Y^{i,N} (t_{k}))  \Delta \\
 		&+G(X^{i,N} (t_{k}), Y^{i,N} (t_{k}))  \Delta B^{i}_k\\
 		&+\int_{t_k}^{t_{k+1}}\!\int_{Z} \Gamma(X^{i,N} (t_{k}), Y^{i,N} (t_{k}))\widetilde{N}^i(\mathrm{d}t,\mathrm{d}z).
 	\end{align*}
 	Simultaneously squaring both sides of the equation yields
 	\begin{align*}
 		&\big|X^{i,N} (t_{k+1}) - Y^{i,N} (t_{k+1})\big|^{2} \\
 		= &\big| X^{i,N} (t_{k}) - Y^{i,N} (t_{k})\big|^{2} + 2\left\langle  X^{i,N} (t_{k}) - Y^{i,N} (t_{k}), F_\Delta(X^{i,N} (t_{k}), Y^{i,N} (t_{k}))  \Delta\right\rangle  \\
 		&+ \big|G(X^{i,N} (t_{k}), Y^{i,N} (t_{k}))\Delta B^{i}_k\big|^{2} + \big|F_\Delta(X^{i,N} (t_{k}), Y^{i,N} (t_{k}))\big|^{2} \Delta^{2} \\
 		&+ 2\left\langle  X^{i,N} (t_{k}) - Y^{i,N} (t_{k}), G(X^{i,N} (t_{k}), Y^{i,N} (t_{k})) \Delta B^{i}_k\right\rangle  \\
 		& + 2\left\langle F_\Delta(X^{i,N} (t_{k}), Y^{i,N} (t_{k}))\Delta, G(X^{i,N} (t_{k}), Y^{i,N} (t_{k})) \Delta B^{i}_k \right\rangle \\
 		&+2\left\langle X^{i,N} (t_{k}) - Y^{i,N} (t_{k}), \int_{t_k}^{t_{k+1}}\int_{Z} \Gamma(X^{i,N} (t_{k}), Y^{i,N} (t_{k}))\widetilde{N}^i(\mathrm{d}t,\mathrm{d}z)\right\rangle \\
 		&+2\left\langle F_\Delta(X^{i,N} (t_{k}), Y^{i,N} (t_{k}))  \Delta, \int_{t_k}^{t_{k+1}}\int_{Z} \Gamma(X^{i,N} (t_{k}), Y^{i,N} (t_{k}))\widetilde{N}^i(\mathrm{d}t,\mathrm{d}z)\right\rangle\\
 		&+\!2\left\langle G(X^{i,N} (t_{k}), Y^{i,N} (t_{k}))\Delta B^{i}_k, \int_{t_k}^{t_{k+1}}\int_{Z}\! \Gamma(X^{i,N} (t_{k}), Y^{i,N} (t_{k}))\widetilde{N}^i(\mathrm{d}t,\mathrm{d}z)\right\rangle\\
 		&+\bigg| \int_{t_k}^{t_{k+1}}\int_{Z} \Gamma(X^{i,N} (t_{k}), Y^{i,N} (t_{k}))\widetilde{N}^i(\mathrm{d}t,\mathrm{d}z)\bigg|^{2}. 
 	\end{align*}
 	Taking conditional expectation on both sides, using Lemma \ref{reis} and Assumption \ref{g}, for any $\Delta\leq 1\land \left(\frac{L_1}{2L_2} \right)^2$, we derive
 	\begin{align*}
 		&\mathbb{E}\left(\big|X^{i,N} (t_{k+1}) - Y^{i,N} (t_{k+1})\big|^{2} \big| \mathcal{F}_{t_{k}}\right) \\
 		\leq&\big| X^{i,N} (t_{k}) - Y^{i,N} (t_{k})\big|^{2} + 2\left\langle  X^{i,N} (t_{k}) - Y^{i,N} (t_{k}), F_\Delta(X^{i,N} (t_{k}), Y^{i,N} (t_{k}))  \right\rangle \Delta \\
 		&+ \big|F_\Delta(X^{i,N} (t_{k}), Y^{i,N} (t_{k}))\big|^{2} \Delta^{2}+ \big|G(X^{i,N} (t_{k}), Y^{i,N} (t_{k}))\big|^{2}m\Delta\\
 		&+\int_{Z} \left| \Gamma(X^{i,N} (t_{k}), Y^{i,N} (t_{k}))\right|^2 \nu(\mathrm{d}z)\Delta\\
 		\leq& \big|X^{i,N} (t_{k}) - Y^{i,N} (t_{k})\big|^{2}+3L_5\mathbb{W}_2^2(\mu_{t_k}^{X,N},\mu_{t_k}^{Y,N}) \Delta\\
 		&-\left((L_1\land 2L_3)-4(3L_0+4L_4)\sqrt{\Delta} -1-3L_5\right)\big|X^{i,N} (t_{k}) - Y^{i,N} (t_{k})\big|^{2}\Delta\\
 		&+(m+1)\lambda_3 \left(\big|X^{i,N} (t_{k}) -\! Y^{i,N} (t_{k})\big|^{2}+ \mathbb{W}_2^2(\mu_{t_k}^{X,N},\mu_{t_k}^{Y,N})\right) \Delta.
 	\end{align*} 
 	Taking expectation on both sides again, since $ X^{i,N}(t_k),i=1,2,\cdots, N  $ are identically distributed, we have
 	\begin{align*}
 		\mathbb{E}\big|X^{i,N} (t_{k+1}) - Y^{i,N} (t_{k+1})\big|^{2} 
 		\leq \left[ 1-\left(2\bar{\lambda}^*-4(3L_0+4L_4)\sqrt{\Delta} \right)\Delta\right]\mathbb{E}\big|X^{i,N} (t_{k}) - Y^{i,N} (t_{k})\big|^{2},
 	\end{align*} 
 	 where $\bar{\lambda}^*:=\frac{1}{2}\left[ (L_1\land 2L_3) -1-6L_5-2(m+1)\lambda_3\right] >0 $. Due to $\Delta\leq 1\land \left( \frac{\bar{\lambda}^*}{8(3L_0+4L_4)}\right) ^2 \land \frac{1}{\bar{\lambda}^*}$, $2\bar{\lambda}^*-4(3L_0+4L_4)\sqrt{\Delta} \geq \frac{3\bar{\lambda}^*}{2}\geq\bar{\lambda}^*$, using the inequality $1-x\leq e^{-x}$, one obtains 
 	\begin{align}\label{eq1}
 		\mathbb{E}\big|X^{i,N} (t_{k+1}) - Y^{i,N} (t_{k+1})\big|^{2}
 		\leq& \left(  1-\bar{\lambda}^*\Delta\right) \mathbb{E}\big|X^{i,N} (t_{k}) - Y^{i,N} (t_{k})\big|^{2}\notag\\
 		\leq&\left(  1-\bar{\lambda}^*\Delta\right) ^{k+1}\mathbb{E}\big|X^{i,N} (0) - Y^{i,N} (0)\big|^{2}\notag\\
 		\leq&e^{-\bar{\lambda}^*t_{k+1}}\mathbb{E}\big|X^{i,N} (0) - Y^{i,N} (0)\big|^{2}.
 	\end{align} 
 	The proof is complete with \eqref{w}.
 \end{proof}
 Now we give the exponential ergodicity of tamed Euler method \eqref{discrete}.
 \begin{theorem}\label{4.5}
 	Let Assumptions \ref{2.1},\ref{g} and \ref{dos} hold with  $(L_1\land 2L_3) -1-6L_5-2(m+1)\lambda_3>0.$
 	For any $\Delta\in (0,\Delta^*] $, $\Delta^*:= 1\land \left(\frac{L_1}{2L_2} \right)^2\land \left( \frac{\bar{\lambda}^*}{8(3L_0+4L_4)}\right) ^2 \land \left(\frac{\rho}{4L_0} \right) ^2\land \frac{1}{\rho}$, there exists a unique invariant probability measure 	$\Pi^{\Delta,N}\in\mathcal{P}_2((\mathbb{R}^d)^N)$	to the tamed Euler scheme \eqref{discrete} satisfying that
 	\[
 	\mathbb{W}_{2}^2\bigl(\mathbf{P}^{\Delta,N}_{t_k}\mu^{\otimes N},\,\Pi^{\Delta,N}\bigr)
 	\leq e^{-\bar{\lambda}^* t_k}\mathbb{W}_{2}^2\bigl(\mu^{\otimes N},\Pi^{\Delta,N}\bigr),
 	\]
 	where $\rho$ and  $\bar{\lambda}^* $ are given in Lemma \ref{new1} and \ref{4.3}, respectively.
 \end{theorem}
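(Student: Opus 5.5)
The plan mirrors the proof of Theorem \ref{4.2}(2), now applied to the Markov chain $Z_k:=(X^{1,N}(t_k),\dots,X^{N,N}(t_k))$ on $(\mathbb{R}^d)^N$, with $\mathbb{W}_2$ on $\mathcal{P}_2((\mathbb{R}^d)^N)$ induced by the Euclidean norm on $(\mathbb{R}^d)^N$.

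\emph{Step 1: a joint contraction estimate.} For arbitrary initial laws $\boldsymbol\mu,\boldsymbol\sigma\in\mathcal{P}_2((\mathbb{R}^d)^N)$, not only tensorized ones, I would show
\[
\mathbb{W}_2^2(\mathbf{P}^{\Delta,N}_{t_k}\boldsymbol\mu,\mathbf{P}^{\Delta,N}_{t_k}\boldsymbol\sigma)\le e^{-\bar\lambda^* t_k}\mathbb{W}_2^2(\boldsymbol\mu,\boldsymbol\sigma).
\]
To do this, take an optimal coupling of the initial data, drive both copies with the same $(B^i,\widetilde N^i)$, and repeat the conditional-expectation computation of Lemma \ref{4.3}. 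The only change is that I sum over $i=1,\dots,N$ instead of invoking identical distribution. The measure terms are handled by
\[
\mathbb{W}_2^2(\mu^{X,N}_{t_k},\mu^{Y,N}_{t_k})\le \frac1N\sum_j|X^{j,N}(t_k)-Y^{j,N}(t_k)|^2,
\]
so after summation they contribute exactly $(3L_5+(m+1)\lambda_3)\sum_i|X^{i,N}-Y^{i,N}|^2\Delta$. This yields $\mathbb{E}|Z_{k+1}-\tilde Z_{k+1}|^2\le(1-\bar\lambda^*\Delta)\mathbb{E}|Z_k-\tilde Z_k|^2$ under the stated restriction on $\Delta$. Lemma \ref{reis} applies pointwise, so it goes through unchanged.

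\emph{Step 2: uniform moments.} Similarly, summing the estimate of Lemma \ref{new1} over $i$ (with $\Delta\le(\rho/4L_0)^2\wedge 1/\rho$) gives $\sup_k\mathbb{E}|Z_k|^2\le C(N)$ for any initial law in $\mathcal{P}_2$. In particular this holds for $\mu^{\otimes N}$, where it is just $N$ times the bound of Lemma \ref{new1}.

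\emph{Step 3: existence of the limit.} Because $Z_k$ is a time-homogeneous Markov chain, the semigroup property $\mathbf{P}_{t_{k+l}}=\mathbf{P}_{t_k}\mathbf{P}_{t_l}$ holds. Combining this with Step 1 gives
\[
\mathbb{W}_2^2(\mathbf{P}_{t_{k+l}}\mu^{\otimes N},\mathbf{P}_{t_k}\mu^{\otimes N})\le e^{-\bar\lambda^* t_k}\,2\bigl(\mathbb{E}|Z_l|^2+\mathbb{E}|Z_0|^2\bigr)\le Ce^{-\bar\lambda^* t_k}.
\]
Hence the sequence is $\mathbb{W}_2$-Cauchy, and by completeness of $\mathcal{P}_2((\mathbb{R}^d)^N)$ it converges to some $\Pi^{\Delta,N}$.

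\emph{Step 4: invariance, uniqueness and the rate.} Step 1 shows that one step $\mathbf{P}_{t_1}$ is $\mathbb{W}_2$-continuous. Then the triangle-inequality argument of Theorem \ref{4.2}(2) gives $\mathbf{P}_{t_1}\Pi^{\Delta,N}=\Pi^{\Delta,N}$, hence invariance for all $t_k$. Uniqueness follows because two invariant measures $\Pi,\Pi'$ satisfy $\mathbb{W}_2^2(\Pi,\Pi')\le e^{-\bar\lambda^*t_k}\mathbb{W}_2^2(\Pi,\Pi')\to0$. The displayed rate is then Step 1 with $\boldsymbol\sigma=\Pi^{\Delta,N}$. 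One should also check that the limit does not depend on $\mu$; this again follows from Step 1.

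The main obstacle is Step 1. Lemma \ref{4.3} is phrased for marginals with tensorized initial data, whereas $\Pi^{\Delta,N}$ is generally not a product measure. So the contraction has to be established at the level of the joint law with arbitrary couplings, and the exchangeability argument must be replaced by summation over particles. I expect the constants to be unaffected, since the mean-field terms are absorbed exactly by the empirical-measure coupling bound.
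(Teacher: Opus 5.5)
Your proposal is correct. It follows the same skeleton as the paper's proof: a joint contraction, the Cauchy property of $\{\mathbf{P}^{\Delta,N}_{t_k}\mu^{\otimes N}\}_k$ via Lemma \ref{new1}, a limit by completeness, invariance by the triangle inequality, and the rate by contracting against $\Pi^{\Delta,N}$. Your Step 1, however, is genuinely different and stronger than what the paper proves.

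The paper obtains \eqref{contraction_joint} only for tensorized initial laws $\mu^{\otimes N},\sigma^{\otimes N}$. It takes i.i.d.\ copies of a one-particle optimal coupling and applies the marginal estimate \eqref{eq1} of Lemma \ref{4.3}, which uses identical distribution of the particles to bound $\mathbb{E}\,\mathbb{W}_2^2(\mu^{X,N}_{t_k},\mu^{Y,N}_{t_k})$ by $\mathbb{E}|X^{i,N}(t_k)-Y^{i,N}(t_k)|^2$. It then uses $\mathbb{W}_2^2(\mu^{\otimes N},\sigma^{\otimes N})=N\,\mathbb{W}_2^2(\mu,\sigma)$. Afterwards it applies that inequality to two measures that are not products once the particles interact:
\begin{itemize}
\item $\mathbf{P}^{\Delta,N}_{t_n}\mu^{\otimes N}$, in the Cauchy step and implicitly in the invariance step, which needs $\mathbb{W}_2(\mathbf{P}^{\Delta,N}_{t_n}\Pi^{\Delta,N},\mathbf{P}^{\Delta,N}_{t_{n+k}}\mu^{\otimes N})\to 0$;
\item $\Pi^{\Delta,N}$, in the final rate.
\end{itemize}

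Your particle-summed contraction, for an arbitrary coupling of arbitrary initial laws on $(\mathbb{R}^d)^N$, is what makes these uses legitimate. It also gives uniqueness within all of $\mathcal{P}_2((\mathbb{R}^d)^N)$ and the $\mathbb{W}_2$-continuity of $\mathbf{P}^{\Delta,N}_{t_1}$ you invoke in Step 4.

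As you anticipate, this costs nothing in the constants. Summing the conditional estimate over $i$ and using $N\,\mathbb{W}_2^2(\mu^{X,N}_{t_k},\mu^{Y,N}_{t_k})\le\sum_j|X^{j,N}(t_k)-Y^{j,N}(t_k)|^2$ reproduces the factor $1-\bigl(2\bar{\lambda}^*-4(3L_0+4L_4)\sqrt{\Delta}\bigr)\Delta$ exactly.

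The restriction $\Delta\le 1/\bar{\lambda}^*$ of Lemma \ref{4.3} is absent from $\Delta^*$, but it is implied by $\Delta\le 1/\rho$, since $2\bar{\lambda}^*\le\rho$. So your Step 1 runs under the theorem's stated step size.

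The paper's route is shorter because it reuses Lemma \ref{4.3} verbatim. Yours is the one that actually covers the non-product laws the argument needs.
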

 
 \begin{proof}
 	Let $\mu,\sigma\in\mathcal P_2(\mathbb R^d)$, then $\mu^{\otimes N},\sigma^{\otimes N}\in\mathcal P_2\left( (\mathbb R^d)^N\right) $, similar as Lemma \ref{4.3}, set $X^{i,N}(t_k) $ and $Y^{i,N}(t_k)$ be the numerical solutions to scheme \eqref{discrete} such that \(\mathcal{L}(X^{i,N}(0))=\mu \), \(\mathcal{L}(Y^{i,N}(0))=\sigma \) and satisfy \eqref{w}. Since initial particles are i.i.d, we have 
 	\begin{align*}
 		(X^{1,N}(0),\dots,X^{N,N}(0))\sim \mu^{\otimes N}, \qquad (Y^{1,N}(0),\dots,Y^{N,N}(0))\sim \sigma^{\otimes N},
 	\end{align*}
 	thus $ \mathbb{W}_{2}^2(\mu^{\otimes N},\sigma^{\otimes N})= \mathbb E\left( \sum_{i=1}^N|X^{i,N} (0) - Y^{i,N} (0)|^2\right) =N \mathbb{W}_2^2(\mu,\sigma).$ Using \eqref{eq1} one has
 	\begin{align}\label{contraction_joint}
 		\mathbb{W}_{2}^2\bigl(\mathbf{P}^{\Delta,N}_{t_k}\mu^{\otimes N},\,
 		\mathbf{P}^{\Delta,N}_{t_k}\sigma^{\otimes N}\bigr)\leq& \mathbb E\left( \sum_{i=1}^N\big|X^{i,N} (t_{k}) - Y^{i,N} (t_{k})\big|^{2}\right) \notag\\
 		\leq&e^{-\bar{\lambda}^*t_{k}}N\mathbb{E}\big|X^{i,N} (0) - Y^{i,N} (0)\big|^{2}\notag\\
 		\leq& e^{-\bar{\lambda}^* t_k}\mathbb{W}_{2}^2\bigl(\mu^{\otimes N},\sigma^{\otimes N}\bigr),
 		\qquad k\in\mathbb{N}_0.
 	\end{align}
 	
 	For any $k,n\in\mathbb{N}_0$, using \eqref{contraction_joint}, we obtain
 	\begin{align}
 		\mathbb{W}_{2}^2\bigl(\mathbf{P}^{\Delta,N}_{t_{k+n}}\mu^{\otimes N},\,
 		\mathbf{P}^{\Delta,N}_{t_k}\mu^{\otimes N}\bigr) 
 		&= \mathbb{W}_{2}^2\bigl(\mathbf{P}^{\Delta,N}_{t_k}(\mathbf{P}^{\Delta,N}_{t_n}\mu^{\otimes N}),\,
 		\mathbf{P}^{\Delta,N}_{t_k}\mu^{\otimes N}\bigr) \notag \\
 		&\leq e^{-\bar{\lambda}^* t_k}\mathbb{W}_{2}^2\bigl(\mathbf{P}^{\Delta,N}_{t_n}\mu^{\otimes N},\,
 		\mu^{\otimes N}\bigr)\notag \\
 		&\leq e^{-\bar{\lambda}^* t_k}\mathbb{E}\left( \sum_{i=1}^{N}\left|  X^{i,N}(t_{n})-X^{i,N}(0)\right|  ^{2}\right)\notag \\
 		&\leq 2Ne^{-\bar{\lambda}^* t_k}\left( \mathbb{E}\left|  X^{i,N}(t_{n})\right|  ^{2}+ \mathbb{E}\left|  X^{i,N}(0)\right|  ^{2}\right).\notag
 	\end{align}
 	By Lemma \ref{new1}, we derive that
 	\[
 	\lim_{k\to\infty}\sup_{n\in\mathbb{N}_0}\mathbb{W}_{2}^2\bigl(\mathbf{P}^{\Delta,N}_{t_{k+n}}\mu^{\otimes N},\,
 	\mathbf{P}^{\Delta,N}_{t_k}\mu^{\otimes N}\bigr)= 0.
 	\]
 	Thus $\{\mathbf{P}^{\Delta,N}_{t_k}\mu^{\otimes N}\}_{k\in\mathbb{N}_0}$ is a Cauchy sequence 
 	in the complete metric space $\bigl(\mathcal{P}_2((\mathbb{R}^d)^N),\,\mathbb{W}_{2}\bigr)$.
 	Consequently, there exists a unique measure $\Pi^{\Delta,N}\in\mathcal{P}_2((\mathbb{R}^d)^N)$ 
 	such that
 	\[
 	\lim_{k\to\infty}\mathbb{W}_{2}\bigl(\mathbf{P}^{\Delta,N}_{t_k}\mu^{\otimes N},\,
 	\Pi^{\Delta,N}\bigr)=0.
 	\]
 	Then for any \( n\in \mathbb{N}_0\), by triangle inequality,
 	\begin{align*}
 		\mathbb{W}_{2}\bigl(\mathbf{P}^{\Delta,N}_{t_n}\Pi^{\Delta,N},\,
 		\Pi^{\Delta,N}\bigr)\leq&\lim_{k\to\infty}\mathbb{W}_{2}\bigl(\mathbf{P}^{\Delta,N}_{t_{n+k}}\mu^{\otimes N},\mathbf{P}^{\Delta,N}_{t_{k}}\mu^{\otimes N}\bigr)=0.
 	\end{align*}
 	Thus,  $\Pi^{\Delta,N}$ is the invariant measure of the tamed Euler scheme \eqref{discrete}.
 	For any initial measure $\mu\in\mathcal{P}_2(\mathbb{R}^d)$, $\Delta\in (0,\Delta^*] $  and \( k\in \mathbb{N}_0\), using \eqref{contraction_joint},
 	\begin{align*}
 		\mathbb{W}_{2}^2\bigl(\mathbf{P}^{\Delta,N}_{t_k}\mu^{\otimes N},\,
 		\Pi^{\Delta,N}\bigr)=&\mathbb{W}_{2}^2\bigl(\mathbf{P}^{\Delta,N}_{t_k}\mu^{\otimes N},\,
 		\mathbf{P}^{\Delta,N}_{t_k}\Pi^{\Delta,N}\bigr)\notag\\
 		\leq& e^{-\bar{\lambda}^* t_k}\mathbb{W}_{2}^2\bigl(\mu^{\otimes N},\Pi^{\Delta,N}\bigr).
 	\end{align*}
 	The proof is complete.
 \end{proof}
 \section{Convergence of numerical invariant measure}
 This section proves that the numerical invariant measure of the tamed Euler scheme \eqref{discrete} converges to the invariant measure of the underlying MV-SDE \eqref{eq1.1}. Since the numerical invariant measure is defined on $(\mathbb R^d)^N$ while the invariant measure of the MV-SDE is on $\mathbb R^d$, we proceed in two steps. First, we compare the numerical invariant measure with its $N$-tensorised  exact counterpart. Then, by considering the marginal distribution, which is also the distribution of a single particle, we obtain the desired convergence. To achieve this, we need to quantify  the convergence of PoC and the time discretization over a finite time horizon $T>0$.  In addition, we suppose:
 \begin{assumption}\label{p}
 	There exists a constant \(p_0>2\) such that $	\mathbb{E}\left| \xi \right|^{p_0} <\infty$.
 \end{assumption}
 \begin{assumption}\label{GAMMA}
 	There exists a constant $ L> 0$ such that for any $ x\in \mathbb{R}^d $ and $
 	\mu \in \mathcal P_2(\mathbb{R}^d)  $,
 	\begin{eqnarray*}
 		\left| g(x,\mu)\right|^{p_0}\vee	\int_{Z}\!\left| \gamma(x,\mu,z)\right|^{p_0}\! \nu(\mathrm{d}z)\leq\! L \left( 1+\left|  x\right|^{p_0}+\mathbb{W}_2^{p_0}(\mu,\delta_0)\right),
 	\end{eqnarray*}
 	where $p_0$ is given in Assumption \ref{p}.
 \end{assumption}
 
 \subsection{Propagation of chaos}
 To begin with, we provide the moment bound of solutions for IPS \eqref{eq2.7}.
 \begin{lemma}\label{2.5}
 	Under Assumptions \ref{dos}, \ref{p} and \ref{GAMMA}, the following bounds hold:
 	\begin{eqnarray*}
 		\sup_{1 \leq i \leq N}\mathbb{E}\left( \sup_{0 \leq t \leq T}\left|x^{i,N}(t) \right|^{p} \right)  \leq C \quad\text{for any $p\in[2,p_0]$},
 	\end{eqnarray*}
 	where $C:=C(T,p,p_0,L_1,L_5,\lambda_3,
 	\mathbb{E}|\xi^i|^p)>0  $ and $p_0$ is given in Assumption \ref{p}.
 \end{lemma}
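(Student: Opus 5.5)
Apply It\^o's formula to $|x^{i,N}(t)|^p$ for fixed $p\in[2,p_0]$, then use a Burkholder--Davis--Gundy (BDG) estimate for the supremum and close with Gronwall. The jump part is handled with the Poisson moment inequality (Kunita-type). Since $\nu(Z)<\infty$, this can be done by writing $\widetilde N=N-\nu\,\mathrm{d}t$ and treating the jump increments directly.

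\textbf{Step 1 (localisation and It\^o expansion).} Introduce the stopping time $\tau_R=\inf\{t\ge 0: |x^{i,N}(t)|\ge R\}$. It\^o's formula for jump processes gives
\begin{align*}
|x^{i,N}(t\wedge\tau_R)|^p=|\xi^i|^p&+\int_0^{t\wedge\tau_R}\Big(p|x|^{p-2}\langle x,b\rangle+\tfrac{p(p-1)}{2}|x|^{p-2}|g|^2\Big)\mathrm{d}s+M_1(t)\\
&+\int_0^{t\wedge\tau_R}\!\!\int_Z\big(|x+\gamma|^p-|x|^p\big)N^i(\mathrm{d}s,\mathrm{d}z)-\int_0^{t\wedge\tau_R}\!\!\int_Z p|x|^{p-2}\langle x,\gamma\rangle\,\nu(\mathrm{d}z)\mathrm{d}s .
\end{align*}
Here $M_1$ is the Brownian stochastic integral, and the arguments $(x^{i,N}(s-),\mu^{x,N}_s)$ are suppressed.

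\textbf{Step 2 (drift and diffusion terms).} Bound these using the one-sided growth bound from Remark~\ref{remark}:
\[
\langle x,b(x,\mu)\rangle\le C\big(1+|x|^2+\mathbb W_2^2(\mu,\delta_0)\big),
\]
together with \eqref{5.2}. Note that the superlinear $b_1$ only helps, through the negative $L_1$ term, so no growth bound on $b_1$ is needed. Young's inequality then gives
\[
|x|^{p-2}\,\mathbb W_2^2(\mu^{x,N}_s,\delta_0)\le C\big(|x|^p+\mathbb W_2^p(\mu^{x,N}_s,\delta_0)\big),
\]
and by Jensen's inequality $\mathbb W_2^p(\mu^{x,N}_s,\delta_0)\le \frac1N\sum_j|x^{j,N}(s)|^p$.

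\textbf{Step 3 (jump term).} Use the elementary inequality
\[
|x+\gamma|^p-|x|^p\le C\big(|x|^{p-1}|\gamma|+|\gamma|^p\big).
\]
Taking the supremum over $t$ and expectation, the $N^i$-integral is bounded by its compensator $\int\!\int C(|x|^{p-1}|\gamma|+|\gamma|^p)\,\nu(\mathrm{d}z)\,\mathrm{d}s$, since the integrand is nonnegative after the bound. H\"older's inequality with $\nu(Z)<\infty$ gives
\[
\int_Z|\gamma|\,\nu(\mathrm{d}z)\le \nu(Z)^{1/2}\Big(\int_Z|\gamma|^2\,\nu(\mathrm{d}z)\Big)^{1/2},
\]
and Assumption~\ref{GAMMA} controls $\int_Z|\gamma|^p\,\nu(\mathrm{d}z)$ for $p\le p_0$. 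This works because $|\gamma|^p\le 1+|\gamma|^{p_0}$ and $\nu$ is finite. The compensator term is treated the same way.

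\textbf{Step 4 (Brownian martingale and Gronwall).} For $M_1$, BDG gives
\[
\mathbb E\sup_{s\le t}|M_1(s)|\le C\,\mathbb E\Big(\int_0^{t\wedge\tau_R}|x|^{2p-2}|g|^2\,\mathrm{d}s\Big)^{1/2}\le \tfrac12\mathbb E\sup_{s\le t}|x|^p+C\int_0^t\mathbb E\big(1+|x|^p+\mathbb W_2^p\big)\,\mathrm{d}s.
\]
Absorb the $\tfrac12$ term into the left-hand side; it is finite thanks to the localisation. Summing over $i$, or using exchangeability so that $\mathbb E\frac1N\sum_j|x^{j,N}|^p=\mathbb E|x^{i,N}|^p$, gives
\[
\mathbb E\sup_{s\le t\wedge\tau_R}|x^{i,N}|^p\le C+C\int_0^t \mathbb E\sup_{r\le s\wedge\tau_R}|x^{i,N}(r)|^p\,\mathrm{d}s .
\]
Gronwall's inequality and then Fatou's lemma as $R\to\infty$ yield the claim, with a constant independent of $N$ and $i$.

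The main obstacle I expect is the jump term in Step 3. It must be controlled uniformly in the supremum without a BDG inequality for the compensated measure at order $p$. Bounding the raw $N^i$-integral by its compensator via nonnegativity, using $\nu(Z)<\infty$, is what I will try first.
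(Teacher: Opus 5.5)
Your skeleton (It\^o's formula for $|x^{i,N}|^p$, BDG on the Brownian integral with absorption of $\frac12\mathbb{E}\sup_{t\le T_1}|x^{i,N}(t)|^p$, Jensen and exchangeability for the empirical-measure term, Gronwall) is the paper's. The genuine difference is the jump part.

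\textbf{The jump term.} The paper keeps the compensated piece $p\int\!\int|x|^{p-2}\langle x,\gamma\rangle\,\widetilde N^i(\mathrm{d}s,\mathrm{d}z)$ and controls its supremum with Kunita's inequality, also absorbed into $\frac12\mathbb{E}\sup$. It bounds the second-order remainder via \eqref{eq2.5}, so only $\int_Z|\gamma|^2\nu(\mathrm{d}z)$ and $\int_Z|\gamma|^p\nu(\mathrm{d}z)$ enter. You instead use $\nu(Z)<\infty$ to split into an uncompensated $N^i$-integral and a $\nu(\mathrm{d}z)\,\mathrm{d}s$ drift. You bound both through $|x+\gamma|^p-|x|^p\le C(|x|^{p-1}|\gamma|+|\gamma|^p)$ and take expectations via the compensator of a nonnegative integrand. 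This is valid and needs no martingale inequality for the jumps. However, it produces $\int_Z|\gamma|\,\nu(\mathrm{d}z)$, which is controllable only because $\nu$ is finite. The paper's split needs only second and $p$-th moments of $\gamma$, so it would extend to infinite-activity $\nu$. Under the standing hypothesis $\nu(Z)<\infty$ both routes are fine.

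\textbf{First repair: the $p$-th moment of $\gamma$.} The bound $|\gamma|^p\le 1+|\gamma|^{p_0}$ only gives $\int_Z|\gamma|^p\nu(\mathrm{d}z)\le \nu(Z)+L\bigl(1+|x|^{p_0}+\mathbb{W}_2^{p_0}(\mu,\delta_0)\bigr)$, which is growth of order $p_0$. So for $p<p_0$ the Gronwall inequality at level $p$ in Step~4 does not follow. Use H\"older on the finite measure, as the paper does:
\[
\int_Z|\gamma|^p\,\nu(\mathrm{d}z)\le \nu(Z)^{\frac{p_0-p}{p_0}}\Bigl(\int_Z|\gamma|^{p_0}\,\nu(\mathrm{d}z)\Bigr)^{\frac{p}{p_0}}\le C\bigl(1+|x|^p+\mathbb{W}_2^p(\mu,\delta_0)\bigr).
\]
Proving $p=p_0$ and interpolating would also give a bound, but with $C$ depending on $\mathbb{E}|\xi|^{p_0}$ rather than $\mathbb{E}|\xi^i|^p$.

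\textbf{Second repair: the stopping time.} Your $\tau_R$ stops only particle $i$, so it does not control $\mathbb{W}_2(\mu^{x,N}_s,\delta_0)$. For $j\neq i$, the term $\mathbb{E}\int_0^{t\wedge\tau_R}|x^{j,N}(s)|^p\,\mathrm{d}s$ is not bounded by the stopped moment of particle $i$. Nor is it equal to it by exchangeability, since $\tau_R$ is not permutation invariant. Summing over $i$ with particle-dependent stopping times does not close the estimate either. Use a common time such as $\tau_R=\inf\{t\ge 0:\max_{j}|x^{j,N}(t)|\ge R\}$ and run Gronwall on $\frac1N\sum_{i}\mathbb{E}\sup_{s\le t\wedge\tau_R}|x^{i,N}(s)|^p$.

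Finiteness of this stopped supremum also needs a line, because a jump can overshoot $R$ at $\tau_R$. It follows from
\[
\mathbb{E}\sum_{s\le t\wedge\tau_R}|x^{i,N}(s)-x^{i,N}(s-)|^p=\mathbb{E}\int_0^{t\wedge\tau_R}\!\int_Z|\gamma|^p\,\nu(\mathrm{d}z)\,\mathrm{d}s\le Ct(1+R^p).
\]

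With these fixes your proof is complete. It is also more careful than the paper's, which absorbs $\frac12\mathbb{E}\sup|x^{i,N}|^p$ without checking that it is finite.
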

 \begin{proof}
 	Applying  It{\^o}'s formula to $ \left|x^{i,N}(t)\right| ^{p} $ and taking the expectation of its supremum over any $ T_1\in[0,T] $, we obtain
 	\begin{align*}
 		\mathbb{E}\left( \sup_{0 \leq t \leq T_1} |x^{i,N}(t)|^p \right) 
 		\leq&\mathbb{E}\left| \xi^i\right|^p + p\mathbb{E}\left( \sup_{0 \leq t \leq T_1} \int_{0}^{t} |x^{i,N}(s)|^{p-2} \langle x^{i,N}(s), b(x^{i,N}(s),\mu_s^{x,N} )\rangle \mathrm{d}s \right) \notag \\
 		& + p\mathbb{E}\left( \sup_{0 \leq t \leq T_1} \int_{0}^{t} |x^{i,N}(s)|^{p-2} \langle x^{i,N}(s), g(x^{i,N}(s),\mu_s^{x,N} )\rangle \mathrm{d}B^i(s) \right) \notag \\
 		&+ \frac{p(p-1)}{2}\mathbb{E}\left( \sup_{0 \leq t \leq T_1} \int_{0}^{t} |x^{i,N}(s)|^{p-2} |g(x^{i,N}(s),\mu_s^{x,N})|^2 \mathrm{d}s \right) \notag \\
 		& + p\mathbb{E}\left( \sup_{0 \leq t \leq T_1} \int_{0}^{t} \int_{Z} \!|x^{i,N}(s)|^{p-2} \langle x^{i,N}(s), \gamma(x^{i,N}(s),\mu_s^{x,N},z) \rangle \widetilde{N}^i(\mathrm{d}s,\mathrm{d}z)\! \right) \notag \\
 		& + \mathbb{E}\bigg( \sup_{0 \leq t \leq T_1} \int_{0}^{t} \int_{Z} \Big( |x^{i,N}(s) + \gamma(x^{i,N}(s),\mu_s^{x,N},z)|^p - |x^{i,N}(s)|^p \notag \\
 		&\quad \quad - p|x^{i,N}(s)|^{p-2} \langle x^{i,N}(s), \gamma(x^{i,N}(s),\mu_s^{x,N},z) \rangle \Big) N^i(\mathrm{d}s,\mathrm{d}z) \bigg).
 	\end{align*}
 	Using Kunita's inequality (Lemma 2.1 in \cite{Kumar2016}), Young's inequality, H{\"o}lder's inequality	and the following remainder formula:
 	\begin{align}\label{eq2.5}
 		\left|y \right|^{p}-\left|b \right|^{p}-p\left|b \right|^{p-2}\left\langle b,y-b \right\rangle=&p(p-1)\int_{0}^{1}(1-\theta)\left|y-b \right|^2\left|b+\theta(y-b) \right|^{p-2}  \mathrm{d}\theta \notag\\
 		\leq &C\left(\left|b \right|^{p-2}\left| y-b\right| ^2+\left| y-b\right| ^{p}\right)
 	\end{align}
 	for any $ y, b\in \mathbb{R}^d $, we can calculate the last two estimates
 	\begin{align*}
 		&p\mathbb{E}\left( \sup_{0 \leq t \leq T_1} \int_{0}^{t} \int_{Z} |x^{i,N}(s)|^{p\!-2} \langle x^{i,N}(s),  \gamma(x^{i,N}(s),\mu_s^{x,N},z) \rangle \widetilde{N}^i(\mathrm{d}s,\!\mathrm{d}z) \right)  \\
 		&+ \mathbb{E}\bigg( \sup_{0 \leq t \leq T_1} \int_{0}^{t} \int_{Z} \Big( |x^{i,N}(s) +  \gamma(x^{i,N}(s),\mu_s^{x,N},z)|^p - |x^{i,N}(s)|^p\\
 		&\qquad - p|x^{i,N}(s)|^{p-2} \langle x^{i,N}(s),  \gamma(x^{i,N}(s),\mu_s^{x,N},z) \rangle \Big) N^i(\mathrm{d}s,\mathrm{d}z) \bigg) \\
 		\leq &C\mathbb{E}\left(  \int_{0}^{T_1}\int_{Z}\left| x^{i,N}(t)\right| ^{2p-2}\left|  \gamma(x^{i,N}(t),\mu_t^{x,N},z)\right| ^2\nu(\mathrm{d}z)\mathrm{d}t\right) ^{\frac{1}{2}}\\
 		&+C\mathbb{E}\left[   \int_{0}^{T_1}\int_{Z}\left( \left|x^{i,N}(t) \right|^{p-2}\left|  \gamma(x^{i,N}(t),\mu_t^{x,N},z)\right| ^2+\left| \gamma(x^{i,N}(t),\mu_t^{x,N},z)\right| ^{p}\right) \nu(\mathrm{d}z)\mathrm{d}t\right] \\
 		\leq &C\mathbb{E}\left(\sup_{0 \leq t \leq T_1}\left| x^{i,N}(t)\right| ^{p}  \int_{0}^{T_1}\int_{Z}\left| x^{i,N}(t)\right| ^{p-2}\left|  \gamma(x^{i,N}(t),\mu_t^{x,N},z)\right| ^2\nu(\mathrm{d}z)\mathrm{d}t\right) ^{\frac{1}{2}}\\
 		&+C\mathbb{E}  \int_{0}^{T_1}\left[\left| x^{i,N}(t)\right| ^{p} +\left( \int_{Z}\left| \gamma(x^{i,N}(t),\mu_t^{x,N},z)\right| ^{p_0}\nu(\mathrm{d}z)\right) ^{\frac{p}{p_0}}\left( \int_{Z}1~\nu(\mathrm{d}z)\right) ^{\frac{p_0-p}{p_0}}\right] \mathrm{d}t.
 	\end{align*}
 	Note that $\int_{Z}1~\nu(\mathrm{d}z)=\nu(Z)<\infty $. By employing Assumptions \ref{dos}, \ref{GAMMA},	H{\"o}lder's inequality, Young's inequality and the Burkholder-Davis-Gundy (B-D-G) inequality,	we can derive that
 	\begin{align}
 		\!	\mathbb{E}\!\left( \sup_{0 \leq t \leq T_1}\left| x^{i,N}(t)\right| ^{p} \right)
 		\leq& \mathbb{E}\left| \xi^i\right|^{p}+\frac{1}{2}\mathbb{E}\left( \sup_{0 \leq t \leq T_1}\left| x^{i,N}(t)\right| ^{p}\right)\notag\\
 		&+C\mathbb{E}\left[   \int_{0}^{T_1}\left( 1+\left| x^{i,N}(t)\right| ^p+\mathbb{W}_2^p(\mu_t^{x,N},\delta_0)\right)  \mathrm{d}t\right] .\notag
 	\end{align} 
 	Using the Fubini theorem, since $ x^{i,N}(t),i=1,2,\cdots, N  $ are identically distributed, $\mathbb{E}\mathbb{W}_2^p(\mu_t^{x,N},\delta_0)\leq \mathbb{E}\left( \frac{1}{N}\sum_{j=1}^{N}\left| x^{j,N}(t)\right|^p \right)=\mathbb{E}\left|  x^{i,N}(t)\right| ^p $, we obtain for any $ T_1\in [0,T] $,
 	\begin{align}
 		\mathbb{E}\left( \sup_{0 \leq t \leq T_1} |x^{i,N}(t)|^p \right)
 		\leq C\left[ 1+\mathbb{E}\left| \xi^i\right|^p  + \int_{0}^{T_1}\mathbb{E}\left( \sup_{0 \leq s \leq t} |x^{i,N}(s)|^p \right) \mathrm{d}t \right].\notag
 	\end{align} 
 	By Gr{\"o}nwall's inequality, we can get the desired assertion. 
 \end{proof}  
\begin{theorem}\label{2.6}
	Let Assumptions \ref{g}, \ref{dos}, \ref{p} and \ref{GAMMA} hold with $p_0>4$. Then, there exists a constant $C>0$, independent of $N$, such that
	\begin{equation*}\label{eq2.10}
		\sup_{1 \leq i \leq N} \mathbb{E}\!\left( \sup_{0 \leq t \leq T} \left|x^i(t) \!- x^{i,N}(t)\right|^2\right) \leq \!C\varphi(N),
	\end{equation*}
	where
	\begin{equation*}
		\varphi(N):=
		\begin{cases}
			N^{-\frac{1}{2}}, &d<4, \\
			N^{-\frac{1}{2}} \log N, &d=4, \\
			N^{-\frac{2}{d}}, &d>4.
		\end{cases}
	\end{equation*}
\end{theorem}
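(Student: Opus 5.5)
We couple the IPS \eqref{eq2.7} and the NIPS \eqref{eq2.8} through the same $(\xi^i,B^i,\widetilde N^i)$. Set $e^i(t):=x^i(t)-x^{i,N}(t)$, so that $e^i(0)=0$. We introduce the empirical measure of the non-interacting particles, $\mu_t^{N}:=\frac1N\sum_{j=1}^N\delta_{x^j(t)}$. The triangle inequality then splits the measure error into two parts:
\[
\mathbb W_2^2(\mu_t^x,\mu_t^{x,N})\le 2\mathbb W_2^2(\mu_t^x,\mu_t^{N})+2\mathbb W_2^2(\mu_t^{N},\mu_t^{x,N}).
\]
Using the diagonal coupling, the last term is at most $\frac2N\sum_j|e^j(t)|^2$, and by exchangeability its expectation equals $2\mathbb E|e^i(t)|^2$. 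The first term is the classical i.i.d.\ empirical measure error. By the Fournier--Guillin theorem, using $\sup_{t\le T}\mathbb E|x(t)|^{p_0}<\infty$ with $p_0>4$, we get $\mathbb E\mathbb W_2^2(\mu_t^x,\mu_t^N)\le C\varphi(N)$ uniformly in $t\in[0,T]$. The condition $p_0>4$ is exactly what removes the additional moment term $N^{-(p_0-2)/p_0}$, since $(p_0-2)/p_0>1/2$. The $p_0$-th moment bound for $x(t)$ follows as in Lemma~\ref{2.5}, or simply as the $N$-independent analogue of it.

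Next we apply Itô's formula to $|e^i(t)|^2$. The drift contribution $2\langle e^i,b(x^i,\mu^x)-b(x^{i,N},\mu^{x,N})\rangle$ is handled by Remark~\ref{remark}: the one-sided bound gives $\le C|e^i|^2+L_5\mathbb W_2^2(\mu^x,\mu^{x,N})$. This is what avoids the superlinear growth of $b_1$, because only the monotonicity of $b_1$ enters. The Itô correction terms from $g$ and from $\gamma$ (the latter through $\int_Z|\cdot|^2\nu(\mathrm dz)$) are controlled by Assumption~\ref{g}, giving $\lambda_3(|e^i|^2+\mathbb W_2^2)$. For the supremum in time we use the BDG inequality for the Brownian martingale and Kunita's (or BDG's) inequality for the compensated Poisson martingale. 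Each is bounded by
\[
\tfrac12\mathbb E\sup_{s\le t}|e^i(s)|^2+C\,\mathbb E\int_0^t\big(|e^i|^2+\mathbb W_2^2(\mu_s^x,\mu_s^{x,N})\big)\,\mathrm ds .
\]

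Collecting the terms and inserting the splitting above, we obtain
\[
\mathbb E\sup_{s\le t}|e^i(s)|^2\le C\int_0^t\mathbb E\sup_{r\le s}|e^i(r)|^2\,\mathrm ds+CT\varphi(N).
\]
Grönwall's inequality then gives the claim, with a constant independent of $i$ and $N$. The main obstacle is the empirical-measure step, namely invoking Fournier--Guillin with the correct uniform moment bound. Everything else follows the same Itô/BDG pattern as Lemma~\ref{2.5}, with the quadratic (rather than $p$-th power) structure keeping the estimates simple.
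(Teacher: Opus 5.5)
Your proposal is correct and follows essentially the same route as the paper: the synchronous coupling, It{\^o}'s formula for $|e^i|^2$ with the one-sided drift bound from Remark~\ref{remark}, the splitting of $\mathbb{W}_2^2(\mu_t^x,\mu_t^{x,N})$ through the i.i.d.\ empirical measure $\mu_t^N$, BDG/Kunita for the martingale parts, exchangeability, and Gr{\"o}nwall. Your Fournier--Guillin rate is what the paper invokes through Theorem~5.8 of Carmona--Delarue, and you correctly note that the required $p_0$-th moment bound is for the McKean--Vlasov solution $x(t)$, not the particle-system bound of Lemma~\ref{2.5} that the paper cites.
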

\begin{proof}
	Define the error $ e^{i,N}(t):=x^i(t)-x^{i,N}(t) $. Applying It{\^o}'s formula to obtain
	\begin{align}\label{eq2.11}
		|e^{i,N}(t)|^2
		\leq& 2\int_{0}^{t} \langle e^{i,N}(s), b(x^i(s),\mu_s^{x^i}) - b(x^{i,N}(s),\mu_s^{x,N}) \rangle \mathrm{d}s \notag \\
		&+ \int_{0}^{t} |g(x^i(s),\mu_s^{x^i}) - g(x^{i,N}(s),\mu_s^{x,N})|^2 \mathrm{d}s \notag \\
		&\! + 2\int_{0}^{t}  \langle e^{i,N}(s), g(x^i(s),\mu_s^{x^i}) - g(x^{i,N}(s),\mu_s^{x,N}) \rangle \mathrm{d}B^i(s) \notag \\
		& + 2\int_{0}^{t} \int_{Z} \langle e^{i,N}(s), \gamma(x^i(s),\mu_s^{x^i},z) - \gamma(x^{i,N}(s),\mu_s^{x,N},z) \rangle \widetilde{N}^i(\mathrm{d}s,\mathrm{d}z) \notag \\
		&+ \int_{0}^{t} \int_{Z} | \gamma(x^i(s),\mu_s^{x^i},z) - \gamma(x^{i,N}(s),\mu_s^{x,N},z)|^2 N^i(\mathrm{d}s,\mathrm{d}z) \notag \\
		=&: J_1 + J_2 + J_3 + J_4 + J_5.
	\end{align}
	\par Define $ \mu_s^N:=\frac{1}{N}\sum_{j=1}^{N}\delta_{x^{j}(s)} $, which is the empirical measure constructed from i.i.d samples of the exact solution, then we have
	\begin{align}\label{w2}
		\mathbb{W}^2_2(\mu_s^{x^i},\mu_s^{x,N})\leq& 2\left( \mathbb{W}^2_2(\mu_s^{x^i},\mu_s^{N})+\mathbb{W}^2_2(\mu_s^N,\mu_s^{x,N})\right) \notag\\
		\leq &2\left( \mathbb{W}^2_2(\mu_s^{x^i},\mu_s^{N})+\frac{1}{N}\sum_{j=1}^{N}\left| x^{j}(s)-x^{j,N}(s)\right| ^2\right).
	\end{align}
	Taking the expectation to the supremum  on both sides of \eqref{eq2.11}, it follows by Remark \ref{remark}, Assumption \ref{GAMMA} and Young's inequality that 	for any $ T_1\in[0,T] $,
	\begin{align}\label{eq2.12}
	\mathbb{E}\left( \sup_{0 \leq t \leq T_1} (	J_1+J_2+J_5)\right) &\leq C\mathbb{E}\int_{0}^{T_1} \bigg( |e^{i,N}(s)|^2 + \mathbb{W}_2^2(\mu_s^{x^i},\mu_s^N) + \frac{1}{N}\sum_{j=1}^N |e^{j,N}(s)|^2 \bigg) \mathrm{d}s.
	\end{align}
	Applying Assumption \ref{g}, the B-D-G inequality, Kunita's inequality,  Young's inequality and H{\"o}lder's inequality, we have
	\begin{align}\label{eq2.16}
		&\mathbb{E}\left(\sup_{0 \leq t \leq T_1} (J_3 + J_4)\right) \notag\\
		\leq& C\left[\mathbb{E}\left(\sup_{0 \leq t \leq T_1}\!|e^{i,N}(t)|^2 \int_{0}^{T_1} |g(x^i(t),\mu_t^{x^i})\! - g(x^{i,N}(t),\mu_t^{x,N})|^2 \mathrm{d}t\right)^{\frac{1}{2}}\right.\notag\\
		&+ \left.\mathbb{E}\left(\sup_{0 \leq t \leq T_1}|e^{i,N}(t)|^2 \int_{0}^{T_1}\int_{Z} |\gamma(x^i(t),\mu_t^{x^i},z)\! - \gamma(x^{i,N}(t),\mu_t^{x,N},z)|^2 \nu(\mathrm{d}z)\mathrm{d}t\right)^{\frac{1}{2}}\right]\notag\\
		\leq &\frac{1}{2}\mathbb{E}\left(\sup_{0 \leq t \leq T_1}|e^{i,N}(t)|^2\right)+C\int_{0}^{T_1}\mathbb{E}\bigg(|e^{i,N}(t)|^2 + \mathbb{W}_2^2(\mu_t^{x^i},\mu_t^N)+ \frac{1}{N}\sum_{j=1}^N |e^{j,N}(t)|^2\bigg)\mathrm{d}t.
	\end{align}

	Thus, combining (\ref{eq2.12}) and (\ref{eq2.16}), since particles $x^{i,N}(t), i=1,\cdots,N$ are identically distributed, we obtain
	\begin{align}
		\mathbb{E}\left( \sup_{0 \leq t \leq T_1}\left|e^{i,N}(t) \right|^{2}\right) 
		\leq C\int_{0}^{T_1}\left[   \mathbb{E}\left( \sup_{0 \leq s \leq t}\left|e^{i,N}(s) \right|^{2}\right) 
		+ \mathbb{E}\mathbb{W}_2^2\!\left(\mu_t^{x^i},\mu_t^{N} \right)
		\right]  \mathrm{d}t.\notag
	\end{align}
	By the use of Gr{\"o}nwall's inequality and the known quantified rate for the PoC theory given in \cite{CarmonaI2018} along with Lemma \ref{2.5}, we can get the final estimate.
\end{proof} 
\begin{remark}\label{sharp}
	\rm	Note that in the present case, the coefficients are assumed to be Lipschitz continuous in $\mathbb{W}_2$-distance with
	respect to the measure variable so that Theorem 5.8 in \cite{CarmonaI2018} is used to estimate the convergence rate of PoC, which depends on the dimension $d$ and seems a little complicated. However,
	if we impose the following assumption on the measure dependence instead of $\mathbb{W}_2$-Lipschitz condition, this result can be improved to be independent of the dimension, i.e.,  there exists a constant $C>0$, independent of $N$, such that
	\begin{equation}\label{eq3.11}
		\mathbb{E}\left( \sup_{0 \leq t \leq T}\left|e^{i,N}(t) \right|^{2}\right) \leq CN^{-\frac{1}{2}}.
	\end{equation} 
	The assumption is: there exists a constant $L>0$ and functions $\phi_u(x,w), u=1,2,\cdots,U,$ continuous on $\mathbb{R}^d\times \mathbb{R}^d$, satisfying for some $L_u>0$,
	\begin{align*}
		\left|\phi_u(x,w_1)-\phi_u(x,w_2) \right| \leq L_u\left|w_1-w_2 \right|, 
	\end{align*}
	such that for any $x\in \mathbb{R}^d$ and $
	\mu_1,\mu_2 \in \mathcal P_2(\mathbb{R}^d)  $,
	\begin{align}
		&\left| (b,g)(x,\mu_1)-(b,g)(x,\mu_2)\right|\vee \int_{Z}\left| \gamma(x,\mu_1,z)-\gamma(x,\mu_2,z)\right|\nu(\mathrm{d}z)\notag\\
		\leq& L\left( \sum_{u=1}^{U}\left| \int_{\mathbb{R}^{d}}\phi_u(x,w)\mu_1(\mathrm{d}w)-\int_{\mathbb{R}^{d}}\phi_u(x,w)\mu_2(\mathrm{d}w)\right|\right).\notag
	\end{align}
	Following the same line as Theorem \ref{2.6}, referring to the techniques in \cite{Hinds2025} and \cite{Yuhang2025}, and employing the Rosenthal inequality (see \cite{Rosenthal1970}), we can  derive \eqref{eq3.11}. In addition, since we no longer need to invoke Theorem 5.8 in \cite{CarmonaI2018}, the initial distribution is only required to have a finite second-order moment in this case (rather than a higher-order moment $p_0>4$ as needed in Theorem \ref{2.6}). The detailed proof is omitted here.
\end{remark}
 
\subsection{Convergence of tamed Euler method}
  In this subsection, we proceed to prove the convergence of the numerical scheme \eqref{eq3.1} to  equation \eqref{eq2.7}. Without loss of generality, assume that there exists a sufficiently large integer $M>1$ such that  $ \Delta:= \frac{T}{M}\in(0,1)$. For any $ k \in \{0,1,\cdots,M\}$,  let $ t_k:=k\Delta $ be discrete time points  and $ \check{t}:=t_k $ for $ t_k\leq t<t_{k+1} $, the continuous version of tamed Euler scheme \eqref{discrete} is defined as
 \begin{align}\label{eq3.1}
 	X^{i,N}(t)=&\xi^i+\int_{0}^{t} b_\Delta(X^{i,N}(\check{s}),\mu_{\check{s}}^{X,N})\mathrm{d}s+\int_{0}^{t} g(X^{i,N}(\check{s}),\mu_{\check{s}}^{X,N})\mathrm{d}B^i(s)\notag\\
 	&+\int_{0}^{t}\int_{Z} \gamma(X^{i,N}(\check{s}),\mu_{\check{s}}^{X,N},z)\widetilde{N}^i(\mathrm{d}s,\mathrm{d}z),
 \end{align}
 where $ \mu_{\check{t}}^{X,N}:=\frac{1}{N}\sum_{j=1}^{N}\delta_{X^{j,N}(\check{t})} $ and $b_\Delta$ is defined as  \eqref{newtamefuction}.

We first introduce two auxiliary lemmas. 
  \begin{lemma}\label{3.1}
 	Under Assumptions \ref{dos},\ref{p} and \ref{GAMMA}, the following estimate holds:
 	\begin{eqnarray*}
 		\sup_{1 \leq i \leq N}\mathbb{E}\left( \sup_{0 \leq t \leq T}\left|X^{i,N}(t) \right|^{p}\right) \leq C \quad \text{for any $p\in[2,p_0]$,}
 	\end{eqnarray*}
 	where $ C:=C(T,p,p_0,L_0,L_1,L_3,L_5,\lambda_3,
 	\mathbb{E}|\xi^i|^p)>0 $ and $p_0$ is given in Assumption \ref{p}.
 \end{lemma}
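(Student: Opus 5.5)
We follow the same route as Lemma \ref{2.5}, applied to the continuous version \eqref{eq3.1}. The new ingredient is that the drift is evaluated at the grid point $X^{i,N}(\check{s})$ and not at $X^{i,N}(s)$, so we will also need to control the one-step increment $X^{i,N}(s)-X^{i,N}(\check s)$.

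First we apply It\^o's formula to $|X^{i,N}(t)|^p$ for $p\in[2,p_0]$. Since the coefficients are frozen at $\check s$, the drift contribution is
\[
p|X^{i,N}(s)|^{p-2}\langle X^{i,N}(s),b_\Delta(X^{i,N}(\check s),\mu^{X,N}_{\check s})\rangle.
\]
We split it as
\[
p|X^{i,N}(s)|^{p-2}\Big(\langle X^{i,N}(\check s),b_\Delta(\cdot)\rangle+\langle X^{i,N}(s)-X^{i,N}(\check s),b_\Delta(\cdot)\rangle\Big).
\]
For the first piece we use Assumption \ref{dos} with $x_2=0$. This gives
\[
\langle x,b_1(x)\rangle\le -L_1(1+|x|^q)|x|^2+\langle x,b_1(0)\rangle ,
\]
and after division by $1+\sqrt\Delta|x|^q$ the result is at most $C(1+|x|^2)$, because the $-L_1$ part is nonpositive. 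Together with the Lipschitz bound on $b_2$, the first piece is therefore bounded by $C|X^{i,N}(s)|^{p-2}(1+|X^{i,N}(\check s)|^2+\mathbb W_2^2(\mu^{X,N}_{\check s},\delta_0))$.

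The second piece uses the key taming bound $|b_\Delta(x,\mu)|\le C\Delta^{-1/2}(1+|x|)+C(1+|x|+\mathbb W_2(\mu,\delta_0))$. This holds because $|b_1(x)|\le C(1+|x|^{q+1})$ by the first line of Assumption \ref{dos}. We write
\[
X^{i,N}(s)-X^{i,N}(\check s)=b_\Delta(\cdot)(s-\check s)+g(\cdot)\big(B^i(s)-B^i(\check s)\big)+\int_{\check s}^s\!\!\int_Z\gamma(\cdot)\,\widetilde N^i(\mathrm dr,\mathrm dz).
\]
The drift part gives $|b_\Delta|^2\Delta\le C(1+|X^{i,N}(\check s)|^2+\mathbb W_2^2)$, which is harmless. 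The stochastic parts are handled with conditional moment bounds: for the Brownian part $\mathbb E(|B^i(s)-B^i(\check s)|^r\mid\mathcal F_{\check s})\le C\Delta^{r/2}$, and this exactly compensates the factor $\Delta^{-1/2}$ in $|b_\Delta|$. For the jump part we use Kunita's inequality on $[\check s,s]$, which gives a factor $C\Delta$ (or $\Delta^{1/2}$ at order two), together with Assumption \ref{GAMMA}. Combining these with H\"older and Young, the term $\mathbb E\,|X^{i,N}(s)|^{p-2}|X^{i,N}(s)-X^{i,N}(\check s)||b_\Delta|$ is bounded by $C\,\mathbb E(1+\sup_{r\le s}|X^{i,N}(r)|^p+\mathbb W_2^p(\mu^{X,N}_{\check s},\delta_0))$.

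The remaining terms are treated exactly as in Lemma \ref{2.5}. The diffusion term, the martingale terms (via the B-D-G and Kunita inequalities, absorbing $\tfrac12\mathbb E\sup|X^{i,N}|^p$) and the jump remainder (via \eqref{eq2.5}, Assumption \ref{GAMMA} and $\nu(Z)<\infty$) are all bounded by $C\int_0^{T_1}\mathbb E(1+\sup_{r\le t}|X^{i,N}(r)|^p)\,\mathrm dt$. Here we use $\mathbb E\mathbb W_2^p(\mu^{X,N}_{\check t},\delta_0)\le \mathbb E|X^{i,N}(\check t)|^p$, which holds because the particles are identically distributed. Gr\"onwall's inequality then yields the claim with a constant independent of $\Delta$ and $N$.

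The main obstacle is the cross term $\langle X^{i,N}(s)-X^{i,N}(\check s),b_\Delta\rangle$, since $b_\Delta$ grows like $\Delta^{-1/2}|x|$ and the superlinear growth appears in the increment. We would handle it by conditioning on $\mathcal F_{\check s}$. Where it is awkward to replace $|X^{i,N}(s)|^{p-2}$ by $|X^{i,N}(\check s)|^{p-2}$, we would first use $|X^{i,N}(s)|^{p-2}\le C(|X^{i,N}(\check s)|^{p-2}+|X^{i,N}(s)-X^{i,N}(\check s)|^{p-2})$ and treat each resulting product separately. If this becomes delicate, the fallback is a discrete argument: prove the $p$-th moment bound at the grid points $t_k$ by expanding $|X^{i,N}(t_{k+1})|^p$ as in Lemma \ref{new1} (only finite-$T$ growth is needed, not dissipativity), and then pass to $\sup_{t\le T}$ by controlling the within-step increments with B-D-G.
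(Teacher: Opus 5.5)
Your proposal is correct and follows essentially the same route as the paper: It\^o's formula for $|X^{i,N}(t)|^p$, splitting the drift term at $X^{i,N}(\check s)$, the taming bound $|b_\Delta(x,\mu)|\le C\Delta^{-1/2}(1+|x|+\mathbb{W}_2(\mu,\delta_0))$, inserting the scheme's one-step increment so that its conditional moments cancel the $\Delta^{-1/2}$, and Gr\"onwall.

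The one place where your version is the better one is the jump part of the cross term. The paper decouples $|X^{i,N}(t)|^{p-2}$ by Young's inequality with exponent $p/2$. This requires the conditional $p/2$-th moment of the jump increment over $[\check t,t]$ to be $O(\Delta^{p/4})$, which holds only for $p\le 4$: for $p>4$, Kunita's inequality contributes $C\Delta\int_Z|\gamma|^{p/2}\nu(\mathrm{d}z)$, which cannot absorb $|b_\Delta|^{p/2}\sim\Delta^{-p/4}$. Your split $|X^{i,N}(s)|^{p-2}\le C\bigl(|X^{i,N}(\check s)|^{p-2}+|X^{i,N}(s)-X^{i,N}(\check s)|^{p-2}\bigr)$ avoids this. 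Each jump-increment term then carries only a single factor $|b_\Delta|$ (plus harmless powers of $|b_\Delta|\Delta$), so Kunita's bound suffices for every $p\in[2,p_0]$. You should therefore make this split the main argument rather than a fallback.
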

 \begin{proof}
 	For any $ i=1,2,\cdots, N $, $ T_1\in[0,T] $, same as Lemma \ref{2.5}, from It{\^o}'s formula,
 	\begin{align}\label{eq3.4}
 		&\mathbb{E}\left( \sup_{0 \leq t \leq T_1}\left| X^{i,N}(t)\right| ^{p}\right)\notag \\
 		\leq&\mathbb{E}\left| \xi^i\right|^{p}+\!p\mathbb{E}\left[  \sup_{0 \leq t \leq T_1}\left(\int_{0}^{t}\left| X^{i,N}(s)\right| ^{p-2}\left\langle X^{i,N}(\check{s}),b_{\Delta}(X^{i,N}(\check{s}),\mu_{\check{s}}^{X,N})\right\rangle  \mathrm{d}s\right)\right] \notag\\
 		&+p\mathbb{E}\!\left[  \sup_{0 \leq t \leq T_1}\left( \int_{0}^{t}\!\left| X^{i,N}(s)\right| ^{p-2}\left\langle X^{i,N}(s)-X^{i,N}(\check{s}),b_{\Delta}(X^{i,N}(\check{s}),\mu_{\check{s}}^{X,N})\right\rangle \mathrm{d}s\right)\right] \notag \\
 		&+\!p\mathbb{E}\!\left[  \sup_{0 \leq t \leq T_1}\left( \int_{0}^{t}\left| X^{i,N}(s)\right| ^{p-2}\left\langle X^{i,N}(s),g(X^{i,N}(\check{s}),\mu_{\check{s}}^{X,N})\right\rangle \mathrm{d}B^i(s)\right)\right] \notag\\
 		&+\!\frac{p(p-1)}{2}\mathbb{E}\!\left[ \sup_{0 \leq t \leq T_1}\left( \int_{0}^{t}\left| X^{i,N}(s)\right| ^{p-2}\left|g(X^{i,N}(\check{s}),\mu_{\check{s}}^{X,N})\right|^2\!\mathrm{d}s\right)\right] \notag \\
 		&+\!p\mathbb{E}\left[  \sup_{0 \leq t \leq T_1}\left( \int_{0}^{t}\!\int_{Z}\left| X^{i,N}(s)\right| ^{p-2}\left\langle X^{i,N}(s),\gamma(X^{i,N}(\check{s}),\mu_{\check{s}}^{X,N},z)\right\rangle\widetilde{N}^i(\mathrm{d}s,\mathrm{d}z)\right)\right]  \notag\\
 		&+\!\mathbb{E}\Bigg[ \sup_{0 \leq t \leq T_1}\Bigg( \int_{0}^{t}\int_{Z}\Big( \left| X^{i,N}(s)\!+\gamma(X^{i,N}(\check{s}),\mu_{\check{s}}^{X,N},z)\right| ^{p}-\!\left| X^{i,N}(s)\right| ^{p}\notag\\
 		&\quad \quad \quad \quad \quad\!-p\left|X^{i,N}(s)\right| ^{p-2}\left\langle X^{i,N}(s),\gamma(X^{i,N}(\check{s}),\mu_{\check{s}}^{X,N},z)\right\rangle \Big) N^i(\mathrm{d}s,\mathrm{d}z)\Bigg)\Bigg] \notag\\
 		\leq &\mathbb{E}\left| \xi^i\right|^{p}+\frac{1}{2}\mathbb{E}\left(\sup_{0 \leq t \leq T_1}\left|X^{i,N}(t) \right|^{p}  \right)\notag\\
 		&+C\mathbb{E}\left[  \int_{0}^{T_1}\left(\left|X^{i,N}(t) \right|^{p}+\left|X^{i,N}(\check{t}) \right|^{p} +\mathbb{W}_2^p(\mu_{t}^{X,N},\delta_0)  +\mathbb{W}_2^p(\mu_{\check{t}}^{X,N},\delta_0) \right) \mathrm{d}t\right] \notag\\
 		&+\!p\mathbb{E}\left[  \sup_{0 \leq t \leq T_1}\left( \int_{0}^{t}\left| X^{i,N}(s)\right| ^{p-2}\left\langle X^{i,N}(s)\!-X^{i,N}(\check{s}),b_{\Delta}(X^{i,N}(\check{s}),\mu_{\check{s}}^{X,N})\right\rangle \mathrm{d}s\right)\right].
 	\end{align}
 	Using \eqref{eq3.1}, the Cauchy-Schwarz inequality,  Young's inequality, Kunita's inequality, H{\"o}lder's inequality  and Fubini theorem, it holds that
 	\begin{align}
 		&p\mathbb{E}\!\left[\sup_{0\le t\le T_1}\left(\int_0^t\!
 		|X^{i,N}(s)|^{p-2}
 		\left\langle X^{i,N}(s)\!-X^{i,N}(\check{s}),
 		b_{\Delta}(X^{i,N}(\check{s}),\mu_{\check{s}}^{X,N})
 		\right\rangle \mathrm{d}s\right)\right] \notag\\[2pt]			
 		\leq& p\mathbb{E}\left( \int_0^{T_1}
 		|X^{i,N}(t)|^{p-2}
 		\left|\int_{\check{t}}^{t}\!
 		b_{\Delta}(X^{i,N}(\check{s}),\mu_{\check{s}}^{X,N})\,\mathrm{d}s\right|
 		\left|b_{\Delta}(X^{i,N}(\check{t}),\mu_{\check{t}}^{X,N})\right|\mathrm{d}t\right)  \notag\\
 		&\!+p\mathbb{E}\left( \int_0^{T_1}\!
 		|X^{i,N}(t)|^{p-2}
 		\left|\int_{\check{t}}^{t}
 		g(X^{i,N}(\check{s}),\mu_{\check{s}}^{X,N})\,\mathrm{d}B^i(s)\right|
 		\left|b_{\Delta}(X^{i,N}(\check{t}),\mu_{\check{t}}^{X,N})\right|\mathrm{d}t\right)  \notag\\
 		&\! +p\mathbb{E}\left( \int_0^{T_1}\!
 		|X^{i,N}(t)|^{p-2}
 		\left|\int_{\check t}^{t}\!\int_Z
 		\gamma(X^{i,N}(\check{s}),\mu_{\check{s}}^{X,N},z)
 		\widetilde N^i(\mathrm{d}s,\mathrm{d}z)\right|
 		\left|b_{\Delta}(X^{i,N}(\check{t}),\mu_{\check{t}}^{X,N})\right|\mathrm{d}t\right)  \notag\\[6pt]
 		\leq{}&
 		C\int_0^{T_1}\mathbb{E}|X^{i,N}(t)|^p\,\mathrm{d}t
 		+C\int_0^{T_1}\mathbb{E}\left(1+\left|X^{i,N}(\check t)\right|
 		+\mathbb{W}_2(\mu^{X,N}_{\check t},\delta_0) \right) ^p\,\mathrm{d}t \notag                                     \\
 		&+C\int_0^{T_1}
 		\mathbb{E}\left[
 		\left|g(X^{i,N}(\check t),\mu_{\check t}^{X,N})\right|^{\frac p2}
 		\left(1+\left|X^{i,N}(\check t)\right|
 		+\mathbb{W}_2(\mu^{X,N}_{\check t},\delta_0) \right)^{\frac p2}
 		\right]\mathrm{d}t                                                \notag        \\
 		&+C\int_0^{T_1}
 		\mathbb{E}\left[
 		\left(
 		\int_Z
 		\left|\gamma(X^{i,N}(\check t),\mu_{\check t}^{X,N},z)\right|^2
 		\nu(\mathrm{d}z)
 		\right)^{\frac p4}
 		\left(1+\left|X^{i,N}(\check t)\right|
 		+\mathbb{W}_2(\mu^{X,N}_{\check t},\delta_0) \right)^{\frac p2}
 		\right]\mathrm{d}t ,\notag
 	\end{align}
 	where by the definition of \(b_\Delta\) and Assumption \ref{dos},
 	we have
 	\begin{align}\label{b}
 		\left|b_\Delta(x,\mu)\right|
 		\le C\Delta^{-\frac12}\bigl(1+|x|+\mathbb{W}_2(\mu,\delta_0)\bigr).
 	\end{align}
 	\par Since $ X^{i,N}(t),i=1,2,\cdots,N $ are identically distributed, substituting above inequality into (\ref{eq3.4}), using \eqref{5.2} and Young's inequality yields
 	\begin{align}
 		\mathbb{E}\left(\sup_{0 \leq t \leq T_1}\left|X^{i,N}(t)\right|^{p}\right)
 		\leq C\Bigg[
 		1 + \int_{0}^{T_1}\mathbb{E}\left(\sup_{0 \leq s \leq t}\left|X^{i,N}(s)\right|^{p}\right) \mathrm{d}t
 		\Bigg].\notag
 	\end{align}
 	Thanks to the Gr{\"o}nwall inequality, we obtain the desired assertion.
 \end{proof}
 \begin{lemma}\label{3.2}
	Let  Assumptions \ref{dos},\ref{p} and \ref{GAMMA} hold. Then, there exists a constant $C>0$, independent of $N$ and $\Delta$, such that
	\begin{eqnarray*}
		\sup_{1 \leq i \leq N}\sup_{0 \leq t \leq T}\mathbb{E}\left| X^{i,N}(t)-X^{i,N}(\check{t})\right| ^{p} \leq C\Delta \quad \text{for any $p\in\left[ 2,p_0\right]$,}
	\end{eqnarray*}
	where $p_0$ is given in Assumption \ref{p}.
\end{lemma}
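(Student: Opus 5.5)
From the continuous scheme \eqref{eq3.1}, for $t\in[t_k,t_{k+1})$ with $\check t=t_k$, the increment splits into three terms:
\begin{align*}
X^{i,N}(t)-X^{i,N}(\check t)=&\int_{\check t}^{t} b_\Delta(X^{i,N}(\check s),\mu_{\check s}^{X,N})\,\mathrm{d}s+\int_{\check t}^{t} g(X^{i,N}(\check s),\mu_{\check s}^{X,N})\,\mathrm{d}B^i(s)\\
&+\int_{\check t}^{t}\int_Z \gamma(X^{i,N}(\check s),\mu_{\check s}^{X,N},z)\,\widetilde N^i(\mathrm{d}s,\mathrm{d}z).
\end{align*}
On $[\check t,t]$ all integrands are frozen at $\check t$, so we apply the elementary inequality $|a+b+c|^p\le 3^{p-1}(|a|^p+|b|^p+|c|^p)$ and estimate each term separately, uniformly in $t\in[0,T]$.

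\textbf{Drift term.} By \eqref{b}, $|b_\Delta(x,\mu)|\le C\Delta^{-1/2}(1+|x|+\mathbb{W}_2(\mu,\delta_0))$. Hence
\[
\mathbb{E}\Big|\int_{\check t}^t b_\Delta\,\mathrm{d}s\Big|^p\le \Delta^p\, C\Delta^{-p/2}\,\mathbb{E}\big(1+|X^{i,N}(\check t)|+\mathbb{W}_2(\mu_{\check t}^{X,N},\delta_0)\big)^p\le C\Delta^{p/2}.
\]
Here we use $\mathbb{E}\mathbb{W}_2^p(\mu_{\check t}^{X,N},\delta_0)\le \frac1N\sum_j\mathbb{E}|X^{j,N}(\check t)|^p$, the fact that the particles are identically distributed, and Lemma \ref{3.1}.

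\textbf{Brownian term.} By the B-D-G inequality and Hölder's inequality in time, this term is bounded by $C\Delta^{p/2}\,\mathbb{E}|g(X^{i,N}(\check t),\mu_{\check t}^{X,N})|^p$. This expectation is bounded by \eqref{5.2} (raised to the power $p/2$) together with Lemma \ref{3.1}, giving $C\Delta^{p/2}$.

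\textbf{Jump term.} This is the term that limits the rate to $\Delta$ rather than $\Delta^{p/2}$, since jumps are not small over short intervals. By Kunita's inequality (Lemma 2.1 in \cite{Kumar2016}), for $p\ge2$,
\begin{align*}
\mathbb{E}\Big|\int_{\check t}^t\!\int_Z\gamma\,\widetilde N^i(\mathrm{d}s,\mathrm{d}z)\Big|^p\le &\;C\,\mathbb{E}\Big(\int_{\check t}^t\!\int_Z|\gamma|^2\nu(\mathrm{d}z)\mathrm{d}s\Big)^{p/2}\\
&+C\,\mathbb{E}\int_{\check t}^t\!\int_Z|\gamma|^p\nu(\mathrm{d}z)\mathrm{d}s.
\end{align*}
Using \eqref{5.2}, the first part is at most $C\Delta^{p/2}$. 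For the second part, we use Hölder's inequality with $\nu(Z)<\infty$, as in Lemma \ref{2.5}:
\[
\int_Z|\gamma|^p\,\nu(\mathrm{d}z)\le \Big(\int_Z|\gamma|^{p_0}\,\nu(\mathrm{d}z)\Big)^{p/p_0}\nu(Z)^{(p_0-p)/p_0}.
\]
By Assumption \ref{GAMMA}, this is at most $C(1+|X^{i,N}(\check t)|^p+\mathbb{W}_2^p(\mu_{\check t}^{X,N},\delta_0))$. Lemma \ref{3.1} then bounds the second part by $C\Delta$.

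\textbf{Conclusion.} Since $\Delta<1$ and $p\ge2$, we have $\Delta^{p/2}\le\Delta$, so all three terms are $O(\Delta)$. The constant is uniform in $i$, $t\in[0,T]$, $N$ and $\Delta$ because Lemma \ref{3.1} is, which yields the claim.

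The only delicate points are two. First, the drift bound must not lose more than $\Delta^{-p/2}$, which the tamed bound \eqref{b} guarantees. Second, the $p$-th moment of the jump coefficient must be controlled through Assumption \ref{GAMMA} rather than only the second-moment bound \eqref{5.2}.
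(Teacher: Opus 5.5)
Your proof is correct and follows the paper's argument essentially step for step. It uses the same three-term split of the increment, the tamed bound \eqref{b} for the drift, Kunita's inequality for the jump integral (with the $\nu(Z)<\infty$ H\"older step and Assumption \ref{GAMMA} for the $\int_Z|\gamma|^p\nu(\mathrm{d}z)$ part), and Lemma \ref{3.1} to close. The only difference is that you bound $\mathbb{E}|g|^p$ and $\mathbb{E}\bigl(\int_Z|\gamma|^2\nu(\mathrm{d}z)\bigr)^{p/2}$ via \eqref{5.2}, which relies on Assumption \ref{g}, and that assumption is not among the lemma's stated hypotheses. The paper instead gets both bounds from Assumption \ref{GAMMA}, using $\nu(Z)<\infty$ for the $\gamma$ term, and the same substitution works directly in your argument.
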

\begin{proof}
	It directly follows from scheme (\ref{eq3.1}), estimate	\eqref{b}, H{\"o}lder's inequality, Kunita's inequality, Assumptions \ref{GAMMA} and Lemma \ref{3.1} that
	\begin{align}
		&\mathbb{E}\left| X^{i,N}(t)-X^{i,N}(\check{t})\right| ^{p}\notag\\
		\leq &C\mathbb{E}\left|
		\int_{\check t}^{t}
		b_\Delta\bigl(X^{i,N}(\check{s}),\mu_{\check{s}}^{X,N}\bigr)
		\,\mathrm{d}s
		\right|^p +C\mathbb{E}\left| g(X^{i,N}(\check{t}),\mu_{\check{t}}^{X,N}) \left( B^i(t)\!-B^i(\check{t})\right) \right|^{p}\notag\\
		&+C \mathbb{E} \left(\int_{\check{t}}^{t}\int_{Z}\!\left| \gamma(X^{i,N}(\check{s}),\mu_{\check{s}}^{X,N},z)\right| ^{2}\nu(\mathrm{d}z)\mathrm{d}s\right) ^\frac{p}{2}+C \mathbb{E} \left(\int_{\check{t}}^{t}\int_{Z}\!\left| \gamma(X^{i,N}(\check{s}),\mu_{\check{s}}^{X,N},z)\right| ^{p}\nu(\mathrm{d}z)\mathrm{d}s\right) \notag\\
		\leq &	C\Delta^{\frac p2}
		\mathbb{E}\left[
		1+\left|X^{i,N}(\check t)\right|^p
		+\mathbb{W}_2^p(\mu_{\check t}^{X,N},\delta_0)
		\right] +C\Delta^{\frac{p}{2}}\mathbb{E}\left| g(X^{i,N}(\check{t}),\mu_{\check{t}}^{X,N})\right| ^{p}  \notag\\
		&+C\Delta^{\frac{p}{2}}\mathbb{E} \left(\int_{Z}\left| \gamma(X^{i,N}(\check{t}),\mu_{\check{t}}^{X,N},z)\right|^{p} \nu(\mathrm{d}z)\right)+C\Delta\mathbb{E} \left(\int_{Z}\left| \gamma(X^{i,N}(\check{t}),\mu_{\check{t}}^{X,N},z)\right|^{p} \nu(\mathrm{d}z)\right) \notag\\
		\leq &C\Delta.\notag
	\end{align}
	The proof is complete.
\end{proof}
 \par According to the IPS \eqref{eq2.7} and scheme \eqref{eq3.1}, let $ e^i(t):=x^{i,N}(t) -X^{i,N}(t) $ denote the error. The following theorem will illustrate the convergence order of time discretization.
 \begin{theorem}\label{3.3}
	Let Assumptions \ref{g},\ref{dos},\ref{p} and \ref{GAMMA} hold with $2(2q+1)\vee \frac{2q(2+\delta)}{\delta}\le p_0$, $\delta\in(0,1)$. Then, there exists a constant $C>0$, independent of $N$ and $\Delta $, such that
	\begin{eqnarray*}
		\sup_{1 \leq i \leq N}\mathbb{E}\left( \sup_{0 \leq t \leq T}\left|x^{i,N}(t)-X^{i,N}(t) \right|^{2} \right) \leq C\Delta^{\frac{2}{2+\delta}},
	\end{eqnarray*}
	where $q$ is given in Assumption \ref{dos} and $p_0$ is given in Assumption \ref{p}.
\end{theorem}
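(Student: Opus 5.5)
Apply Itô's formula to $|e^i(t)|^2$, where $e^i(t)=x^{i,N}(t)-X^{i,N}(t)$. Decompose the drift difference as
\[
b(x^{i,N}(s),\mu_s^{x,N})-b_\Delta(X^{i,N}(\check s),\mu_{\check s}^{X,N})
=\big[b(x^{i,N}(s),\mu_s^{x,N})-b(X^{i,N}(s),\mu_s^{X,N})\big]+\big[b(X^{i,N}(s),\mu_s^{X,N})-b(X^{i,N}(\check s),\mu_{\check s}^{X,N})\big]+\big[b(X^{i,N}(\check s),\mu_{\check s}^{X,N})-b_\Delta(X^{i,N}(\check s),\mu_{\check s}^{X,N})\big].
\]
Treat the diffusion and jump differences the same way, splitting at $X^{i,N}(s)$.

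\emph{Step 1: the first drift term.} By Remark \ref{remark}, this term gives $C(|e^i|^2+\mathbb{W}_2^2(\mu_s^{x,N},\mu_s^{X,N}))$. We use $\mathbb{W}_2^2(\mu_s^{x,N},\mu_s^{X,N})\le \frac1N\sum_j|e^j(s)|^2$ and exchangeability to reduce its expectation to $\mathbb{E}|e^i(s)|^2$.

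\emph{Step 2: the remaining two drift terms.} Pair them with $e^i$ via Young's inequality, giving $|e^i|^2$ plus the squared terms. For the second term, Remark \ref{remark} bounds $|b(X(s),\mu_s)-b(X(\check s),\mu_{\check s})|^2$ by $C(1+|X(s)|^{2q}+|X(\check s)|^{2q})|X(s)-X(\check s)|^2$ plus a $\mathbb{W}_2$ term. For the third term, note that $b-b_\Delta=\frac{\sqrt\Delta|x|^q b_1(x)}{1+\sqrt\Delta|x|^q}$. Hence $|b-b_\Delta|^2\le \Delta|x|^{2q}|b_1(x)|^2\le C\Delta(1+|x|^{4q+2})$. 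Lemma \ref{3.1} bounds this by $C\Delta$, since $2(2q+1)\le p_0$.

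\emph{Step 3: the main obstacle.} The term $\mathbb{E}[(1+|X(s)|^{2q}+|X(\check s)|^{2q})|X(s)-X(\check s)|^2]$ is where the rate is lost. Lemma \ref{3.2} only gives $\mathbb{E}|X(s)-X(\check s)|^{p}\le C\Delta$ for every $p$, because of the jumps, rather than $\Delta^{p/2}$. I would apply Hölder's inequality with exponents $\frac{2+\delta}{2}$ and $\frac{2+\delta}{\delta}$:
\[
\big(\mathbb{E}|X(s)-X(\check s)|^{2+\delta}\big)^{\frac{2}{2+\delta}}\big(\mathbb{E}(1+|X|^{2q})^{\frac{2+\delta}{\delta}}\big)^{\frac{\delta}{2+\delta}}\le C\Delta^{\frac{2}{2+\delta}}.
\]
This uses Lemma \ref{3.2} with $p=2+\delta\le p_0$, and Lemma \ref{3.1} with $\frac{2q(2+\delta)}{\delta}\le p_0$. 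This explains both the hypothesis and the rate.

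The measure term $\mathbb{W}_2^2(\mu_s^{X,N},\mu_{\check s}^{X,N})\le\frac1N\sum_j|X^j(s)-X^j(\check s)|^2$ is $O(\Delta)$. The diffusion and jump differences are handled with Assumption \ref{g}: $|g(x^{i,N}(s),\cdot)-g(X(\check s),\cdot)|^2\le C(|e^i(s)|^2+|X(s)-X(\check s)|^2+\text{measure terms})$, which gives $C\mathbb{E}|e^i|^2+C\Delta$. The compensated jump term $J_5$ is treated likewise.

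\emph{Step 4: closing the estimate.} Take the supremum over $[0,T_1]$ and handle the martingale parts with the B-D-G and Kunita inequalities as in Theorem \ref{2.6}. This absorbs $\frac12\mathbb{E}\sup|e^i|^2$ and leaves
\[
\mathbb{E}\sup_{t\le T_1}|e^i(t)|^2\le C\int_0^{T_1}\mathbb{E}\sup_{r\le s}|e^i(r)|^2\,\mathrm{d}s+C\Delta^{\frac{2}{2+\delta}}.
\]
Gronwall's inequality then gives the claim, with constants independent of $N$ and $\Delta$ by exchangeability.
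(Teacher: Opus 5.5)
Your proposal is correct and follows the paper's proof essentially step for step: the same three-way splitting of the drift through $X^{i,N}(s)$ and $X^{i,N}(\check s)$, the bound $|b-b_\Delta|\le\sqrt{\Delta}|x|^q|b_1(x)|$ controlled via Lemma \ref{3.1} with $p=4q+2$, the H\"older split with exponents $\frac{2+\delta}{2},\frac{2+\delta}{\delta}$ combined with Lemmas \ref{3.1} and \ref{3.2}, and then B-D-G/Kunita to absorb half the supremum followed by Gr\"onwall. One small point, which the paper shares: invoking Lemma \ref{3.2} with $p=2+\delta$ tacitly requires $2+\delta\le p_0$, and the stated hypotheses do not force this when $q$ is small (e.g.\ $q=0.1$, $\delta=0.9$, $p_0=2.5$); this is harmless under the assumption $p_0>4$ used in the later results.
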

\begin{proof}
	Applying It{\^o}'s formula, we have
	\begin{align}
		\left|e^i(t) \right|^{2}
		\leq& 2\int_{0}^{t} \left\langle e^i(s) ,b(x^{i,N}(s),\mu_s^{x,N})\!-b_{\Delta}(X^{i,N}(\check{s}),\mu_{\check{s}}^{X,N}) \right\rangle \mathrm{d}s\notag\\
		&+\!2\int_{0}^{t}\left\langle e^i(s) ,g(x^{i,N}(s),\mu_s^{x,N})\!-g(X^{i,N}(\check{s}),\mu_{\check{s}}^{X,N}) \right\rangle \mathrm{d}B^i(s)\notag\\
		&+\int_{0}^{t} \left|  g(x^{i,N}(s),\mu_s^{x,N}) \!-g(X^{i,N}(\check{s}),\mu_{\check{s}}^{X,N}) \right|^2\mathrm{d}s\notag\\
		&+2\int_{0}^{t}\int_{Z} \left\langle e^i(s) ,\gamma(x^{i,N}(s),\mu_{s}^{X,N},z)\!-\gamma(X^{i,N}(\check{s}),\mu_{\check{s}}^{X,N},z) \right\rangle\widetilde{N}^i(\mathrm{d}s,\mathrm{d}z)\notag\\
		&+\int_{0}^{t}\int_{Z} \left|\gamma(x^{i,N}(s),\mu_{s}^{x,N},z)-\gamma(X^{i,N}(\check{s}),\mu_{\check{s}}^{X,N},z)\right|^{2} N^i(\mathrm{d}s,\mathrm{d}z).\notag
	\end{align}
	By using Assumption \ref{dos}, Remark \ref{remark} and the definition of  $ b_{\Delta} $, one obtains
	\begin{align}
		&\langle e^i(s), b(x^{i,N}(s),\mu_s^{x,N}) - b_{\Delta}(X^{i,N}(\check{s}),\mu_{\check{s}}^{X,N}) \rangle \notag \\
		\leq& \langle e^i(s), b(x^{i,N}(s),\mu_s^{x,N}) - b(X^{i,N}(s),\mu_s^{X,N}) \rangle \notag \\
		& + \langle e^i(s), b(X^{i,N}(s),\mu_s^{X,N}) - b(X^{i,N}(\check{s}),\mu_{\check{s}}^{X,N}) \rangle  \notag\\
		&+ \langle e^i(s), b(X^{i,N}(\check{s}),\mu_{\check{s}}^{X,N}) - b_{\Delta}(X^{i,N}(\check{s}),\mu_{\check{s}}^{X,N}) \rangle \notag \\
		\leq{}&
		C|e^i(s)|^2
		+C\mathbb W_2^2(\mu_s^{x,N},\mu_s^{X,N})+
		C|X^{i,N}(s)-X^{i,N}(\check{s})|^2
		\left(1+|X^{i,N}(s)|^{2q}+|X^{i,N}(\check{s})|^{2q}\right) \notag\\
		&+
		C\mathbb W_2^2(\mu_s^{X,N},\mu_{\check{s}}^{X,N})+C\sqrt\Delta |e^i(s)|
		\left(1+|X^{i,N}(\check{s})|^{2q+1}\right),\notag
	\end{align}
where	$\left| b(x,\mu)-b_\Delta(x,\mu)\right| \leq \left|b_1(x) \right|\left| x\right|^q \sqrt{\Delta}$. By Assumption \ref{g}, we have
	\begin{align}
		&|g(x^{i,N}(s),\mu_s^{x,N}) - g(X^{i,N}(\check{s}),\mu_{\check{s}}^{X,N})|^2 \notag \\
		\leq &2|g(x^{i,N}(s),\mu_s^{x,N}) - g(X^{i,N}(s),\mu_s^{X,N})|^2  + 2|g(X^{i,N}(s),\mu_s^{X,N}) - g(X^{i,N}(\check{s}),\mu_{\check{s}}^{X,N})|^2 \notag \\
		\leq &C\Big(|e^i(s)|^2 +\! \mathbb{W}_2^2(\mu_s^{x,N},\mu_s^{X,N})\Big) + C\Big(|X^{i,N}(s) - X^{i,N}(\check{s})|^2+ \mathbb{W}_2^2(\mu_s^{X,N},\mu_{\check{s}}^{X,N}) \Big),\notag
	\end{align}
	and 
	\begin{align}
		&\int_{Z}\left|\gamma(x^{i,N}(t),\mu_t^{x,N},z)
		-\gamma(X^{i,N}(\check t),\mu_{\check t}^{X,N},z) \right|^{2} \nu(\mathrm{d}z)\notag \\
		\leq &2\bigg(\int_{Z}\left|\gamma(x^{i,N}(t),\mu_t^{x,N},z)-\gamma(X^{i,N}(t),\mu_t^{X,N},z) \right|^{2}\nu(\mathrm{d}z) \notag\\
		&\qquad +\int_{Z} \left|\gamma(X^{i,N}(t),\mu_t^{X,N},z)-\gamma(X^{i,N}(\check{t}),\mu_{\check t}^{X,N},z) \right|^{2} \nu(\mathrm{d}z) \bigg)\notag \\
		\leq &C\Big(|e^i(t)|^2 +|X^{i,N}(t) - X^{i,N}(\check{t})|^2+\mathbb W_2^2(\mu_t^{x,N},\mu_t^{X,N})
		+\mathbb W_2^2(\mu_t^{X,N},\mu_{\check t}^{X,N})\Big).\notag
	\end{align}
	Thus, for any $ T_1\in[0,T] $, it follows from Young's inequality, H{\"o}lder's inequality, the B-D-G inequality and Kunita's inequality that 
	\begin{align}
		&\mathbb{E}\left( \sup_{0 \leq t \leq T_1}\left|e^i(t) \right|^{2}\right)\notag\\
		\leq{}&
		C\mathbb{E}\int_{0}^{T_1}
		\Bigg[
		\left| e^i(t)\right|^{2}
		+\mathbb{W}_2^2(\mu_t^{x,N},\mu_{t}^{X,N})
		+\mathbb{W}_2^2(\mu_t^{X,N},\mu_{\check{t}}^{X,N})
		\notag\\
		&\qquad
		+\Delta
		\left(1+\left|X^{i,N}(\check t)\right|^{2q+1}\right)^2
		+\left|X^{i,N}(t)-X^{i,N}(\check t)\right|^2
		\notag\\
		&\qquad
		+\left|X^{i,N}(t)-X^{i,N}(\check t)\right|^{2}
		\left(
		1+\left|X^{i,N}(t)\right|^{2q}
		+\left|X^{i,N}(\check t)\right|^{2q}
		\right)
		\Bigg]\mathrm{d}t
		\notag\\
		&+C\mathbb{E}\left(
		\int_{0}^{T_1}
		\left| e^i(t)\right|^{2}
		\left|
		g(x^{i,N}(t),\mu_t^{x,N})
		-g(X^{i,N}(\check t),\mu_{\check t}^{X,N})
		\right|^2
		\mathrm{d}t
		\right)^{\frac12}
		\notag\\
		&+C\mathbb{E}\left(
		\int_{0}^{T_1}\int_Z
		\left| e^i(t)\right|^{2}
		\left|
		\gamma(x^{i,N}(t),\mu_t^{x,N},z)
		-\gamma(X^{i,N}(\check t),\mu_{\check t}^{X,N},z)
		\right|^2
		\nu(\mathrm{d}z)\mathrm{d}t
		\right)^{\frac12}
		\notag\\
		&+C\mathbb{E}\int_{0}^{T_1}\int_Z
		\left|
		\gamma(x^{i,N}(t),\mu_t^{x,N},z)
		-\gamma(X^{i,N}(\check t),\mu_{\check t}^{X,N},z)
		\right|^2
		\nu(\mathrm{d}z)\mathrm{d}t\notag \\
		\leq{}&
		\frac12\mathbb{E}\left( \sup_{0 \leq t \leq T_1}\left|e^i(t) \right|^{2}\right)
		+
		C\mathbb{E}\int_{0}^{T_1}
		\Big[
		|e^i(t)|^2
		+\mathbb W_2^2(\mu_t^{x,N},\mu_t^{X,N})
		+\mathbb W_2^2(\mu_t^{X,N},\mu_{\check t}^{X,N})
		\notag\\
		&\qquad
		+|X^{i,N}(t)-X^{i,N}(\check t)|^2
		+|X^{i,N}(t)-X^{i,N}(\check t)|^2
		\left(
		1+|X^{i,N}(t)|^q
		+|X^{i,N}(\check t)|^q
		\right)^2
		\notag\\
		&\qquad
		+\Delta
		\left(1+|X^{i,N}(\check t)|^{2q+1}\right)^2
		\Big]\mathrm{d}t.\notag
	\end{align}
	
	 Since $X^{i,N}(t) $ for $i=1,2,\cdots,N$ are identically distributed, combining Lemma \ref{3.1} and \ref{3.2}, it can be deduced from the Fubini theorem, H{\"o}lder's inequality and Young's inequality that for any $\delta\in(0,1)$,
	\begin{align}
		&\mathbb{E}\left( \sup_{0\le t\le T_1}\bigl|e^i(t)\bigr|^2\right)\notag\\
		\le& C\int_{0}^{T_1}\bigg[
		\mathbb{E}\Bigl(\sup_{0\le s\le t}\bigl|e^i(s)\bigr|^2\Bigr)+\mathbb{E}\Bigl(\frac1N\sum_{j=1}^{N}\bigl|e^j(t)\bigr|^2\Bigr)\!+\Delta+\mathbb{E}\Bigl(\frac1N\sum_{j=1}^{N}\bigl|X^{j,N}(t)-X^{j,N}(\check{t})\bigr|^2\Bigr)	\notag\\
		&+\Bigl(\mathbb{E}\bigl|X^{i,N}(t)-X^{i,N}(\check{t})\bigr|^{2+\delta}\Bigr)^{\frac{2}{2+\delta}}\Bigl(\mathbb{E}\left( 1+\left|X^{i,N}(t) \right|^q+\left|X^{i,N}(\check{t}) \right|^q \right)^{\frac{2(2+\delta)}{\delta}}\Bigr)^{\frac{\delta}{2+\delta}}
		\bigg] \mathrm{d}t \notag\\[4pt]
		\le& C\int_{0}^{T_1}\mathbb{E}\Bigl(\sup_{0\le s\le t}\bigl|e^i(s)\bigr|^2\Bigr)\mathrm{d}t + C\left( \Delta+ \Delta+ \Delta^{\frac{2}{2+\delta}}\right) .\notag
	\end{align}
	By Gr{\"o}nwall's inequality, we get the desired assertion.
\end{proof}
 Combining Theorem \ref{2.6} and \ref{3.3}, we get the error estimate of MV-SDEs \eqref{eq1.1} driven by L{\'e}vy noise in finite time.
 \begin{corollary}\label{3.4}
 	Let Assumptions \ref{g},\ref{dos},\ref{p} and \ref{GAMMA} hold with $p_0>4$ and $2(2q+1)\vee \frac{2q(2+\delta)}{\delta}\le p_0$, $\delta\in(0,1)$. Then, there exists a constant $C>0$, independent of $N$ and $\Delta $, such that
 	\begin{eqnarray*}
 		\sup_{1 \leq i \leq N}\!\mathbb{E}\left( \sup_{0 \leq t \leq T}\left|x^i(t)-X^{i,N}(t) \right|^{2} \right) \leq C\left(  \varphi(N)+\Delta^{\frac{2}{2+\delta}}\right).
 	\end{eqnarray*}
 \end{corollary}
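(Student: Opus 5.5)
The estimate follows from the triangle inequality: insert the interacting particle $x^{i,N}(t)$ between the non-interacting particle $x^i(t)$ and the tamed Euler approximation $X^{i,N}(t)$. For each fixed $i$ and every $t\in[0,T]$,
\begin{align*}
\left|x^i(t)-X^{i,N}(t)\right|^2\leq 2\left|x^i(t)-x^{i,N}(t)\right|^2+2\left|x^{i,N}(t)-X^{i,N}(t)\right|^2 .
\end{align*}
Taking the supremum over $t\in[0,T]$ of both sides, using $\sup(a+b)\leq\sup a+\sup b$, and then taking expectations gives
\begin{align*}
\mathbb{E}\Big(\sup_{0\leq t\leq T}\left|x^i(t)-X^{i,N}(t)\right|^2\Big)\leq 2\,\mathbb{E}\Big(\sup_{0\leq t\leq T}\left|x^i(t)-x^{i,N}(t)\right|^2\Big)+2\,\mathbb{E}\Big(\sup_{0\leq t\leq T}\left|x^{i,N}(t)-X^{i,N}(t)\right|^2\Big).
\end{align*}

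The first term is bounded by Theorem \ref{2.6}, which applies because Assumptions \ref{g}, \ref{dos}, \ref{p} and \ref{GAMMA} hold with $p_0>4$. This gives the bound $C\varphi(N)$, with $C$ independent of $N$ (and of $\Delta$, since no discretization appears in that term).

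The second term is bounded by Theorem \ref{3.3}, whose hypothesis $2(2q+1)\vee\frac{2q(2+\delta)}{\delta}\leq p_0$ with $\delta\in(0,1)$ is exactly the one assumed here. This gives $C\Delta^{\frac{2}{2+\delta}}$, with $C$ independent of $N$ and $\Delta$.

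Both bounds are uniform in $i\in\{1,\dots,N\}$, so taking $\sup_{1\leq i\leq N}$ and absorbing the factor $2$ into the generic constant yields the stated estimate.

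The only point needing care is that the two theorems are applied under one common set of hypotheses on the same probability space. The NIPS, IPS and scheme are all driven by the same $(\xi^i,B^i,\widetilde{N}^i)$, so the pathwise triangle inequality above is legitimate. The corollary's hypotheses are the union of those of Theorems \ref{2.6} and \ref{3.3}, so no new estimate is required.
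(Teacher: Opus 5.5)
Your proposal is correct and is the paper's own argument: the paper obtains the corollary by simply combining Theorem \ref{2.6} (propagation of chaos) and Theorem \ref{3.3} (time discretization of the IPS). As you do, this comes from splitting $x^i-X^{i,N}=(x^i-x^{i,N})+(x^{i,N}-X^{i,N})$ pathwise and using $|a+b|^2\leq 2|a|^2+2|b|^2$, which is legitimate because all three processes are driven by the same $(\xi^i,B^i,\widetilde{N}^i)$.
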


  In the end, we prove that the numerical invariant measure converges to the underlying one. Recall that $\Pi^{\Delta,N}$ is the invariant measure of the tamed Euler scheme on $(\mathbb{R}^{d})^N$, whereas $\mu^*\in\mathcal P_2(\mathbb R^d)$ is the invariant measure of the MV-SDE. Since these two measures are defined on different spaces, we compare $\Pi^{\Delta,N}$ with the tensor product measure $(\mu^*)^{\otimes N}$ on $(\mathbb{R}^{d})^N$. Let $\mathbf{P}_{t}\mu^{\otimes N}$
  denote the law of $(x^1(t),\cdots,x^N(t))$ for the NIPS \eqref{eq2.8} with initial law \(\mu^{\otimes N}\)
  . By tensorizing the contraction estimate of Theorem \ref{4.2}, and using an 
  argument analogous to that in Theorem \ref{4.5}, we obtain the following 
  corollary.
  \begin{corollary}\label{joint}
  	Let Assumptions \ref{2.1}-\ref{g} hold with $2\lambda_1-2\lambda_2-8\lambda_{3}-1>0$. Then, \begin{align*}
  		\mathbb{W}_{2}^{2}\left( \mathbf{P}_{t}\mu^{\otimes N},\left( \mu^*\right)^{\otimes N} \right) \leq e^{ -\lambda^* t}\mathbb{W}_{2}^{2}(\mu^{\otimes N},\left( \mu^*\right)^{\otimes N}),\ \ t\geq 0,\,\mu\in\mathcal{P}_{2}(\mathbb{R}^d).
  	\end{align*}
  \end{corollary}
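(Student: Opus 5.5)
Here $\mathbf{P}_t\mu^{\otimes N}$ is the law of the NIPS \eqref{eq2.8} at time $t$, i.e.\ $N$ independent copies of the MV-SDE \eqref{eq1.1} driven by independent noises $(B^i,\widetilde N^i)$. The plan is therefore to tensorize the one-particle contraction \eqref{**} of Theorem~\ref{4.2}. No new dynamical estimate is needed.

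\emph{Step 1: product structure.} Since $x^1(t),\dots,x^N(t)$ are independent and each has law $P_t\mu$, we have $\mathbf{P}_t\mu^{\otimes N}=(P_t\mu)^{\otimes N}$. Because $\mu^*$ is invariant for $P_t$ (Theorem~\ref{4.2}(2)), also $(\mu^*)^{\otimes N}=(P_t\mu^*)^{\otimes N}=\mathbf{P}_t(\mu^*)^{\otimes N}$. So $(\mu^*)^{\otimes N}$ is invariant for $\mathbf{P}_t$.

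\emph{Step 2: Wasserstein distance between product measures.} I will show that for $\mu,\sigma\in\mathcal P_2(\mathbb R^d)$,
\begin{equation*}
\mathbb{W}_2^2(\mu^{\otimes N},\sigma^{\otimes N})=N\,\mathbb{W}_2^2(\mu,\sigma).
\end{equation*}
For ``$\le$'', take an optimal coupling $\pi$ of $(\mu,\sigma)$. Then $\pi^{\otimes N}$, after reordering coordinates, couples $\mu^{\otimes N}$ and $\sigma^{\otimes N}$, and its cost is $N\mathbb{W}_2^2(\mu,\sigma)$.

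For ``$\ge$'', take any coupling of the product measures. Its $i$-th pair of marginals is a coupling of $(\mu,\sigma)$. Hence
\begin{equation*}
\mathbb{E}\sum_{i=1}^N|X_i-Y_i|^2\ge N\mathbb{W}_2^2(\mu,\sigma).
\end{equation*}
This identity was already used implicitly in the proof of Theorem~\ref{4.5}.

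\emph{Step 3: conclusion.} Combining Steps 1 and 2 with \eqref{**} gives
\begin{equation*}
\mathbb{W}_2^2\big(\mathbf{P}_t\mu^{\otimes N},(\mu^*)^{\otimes N}\big)=N\,\mathbb{W}_2^2(P_t\mu,P_t\mu^*)\le Ne^{-\lambda^*t}\mathbb{W}_2^2(\mu,\mu^*)=e^{-\lambda^*t}\,\mathbb{W}_2^2\big(\mu^{\otimes N},(\mu^*)^{\otimes N}\big).
\end{equation*}
Alternatively, mirroring Theorem~\ref{4.5}, one can couple $N$ pairs of solutions with optimally coupled initial data and sum the one-particle estimate from the proof of Theorem~\ref{4.2}(1).

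The only step needing care is the lower bound in Step 2, because the exact equality is what turns the factor $N$ back into the product-measure distance on the right-hand side. It is elementary, since marginals of a coupling of product measures are couplings of the factors. The identification $\mathbf{P}_t\mu^{\otimes N}=(P_t\mu)^{\otimes N}$ relies on pathwise uniqueness for \eqref{eq1.1}, so that each $x^i$ is a measurable functional of its own independent data $(\xi^i,B^i,\widetilde N^i)$.
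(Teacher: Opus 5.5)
Your proof is correct and is essentially the paper's argument: the paper obtains this corollary by tensorizing the contraction \eqref{**} of Theorem~\ref{4.2} in the same way as in the proof of Theorem~\ref{4.5}, using the identity $\mathbb{W}_2^2(\mu^{\otimes N},\sigma^{\otimes N})=N\,\mathbb{W}_2^2(\mu,\sigma)$ and the invariance of $(\mu^*)^{\otimes N}$ under $\mathbf{P}_t$. You also supply two details the paper leaves implicit: the lower bound in that identity (which the argument genuinely needs), and the independence of the NIPS particles via pathwise uniqueness.
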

  According to \cite{soni2025}, particles are exchangeable and thus $\mathcal{L}(X^{i,N}(t))=\mathcal{L}(X^{j,N}(t))$ for any $t,i,j$. Therefore, by considering the marginal distribution of any particle, denote by $\mu^{\Delta,N}\in\mathcal{P}_2(\mathbb{R}^d)$, we derive the convergence between the numerical invariant measure $\mu^{\Delta,N} $ and the exact invariant measure $\mu^*$. 
 \begin{theorem}	\label{convergence}
 Let Assumptions \ref{g}, \ref{dos}, \ref{p}   and \ref{GAMMA} hold with $(L_1\land 2L_3) -1-6L_5-2(m+1)\lambda_3>0$, $p_0>4$
 and $2(2q+1)\vee \frac{2q(2+\delta)}{\delta}\le p_0$, $\delta\in(0,1)$. Then
 	\[
 	\lim_{\substack{N\to\infty\\ \Delta\to0}}
 	\mathbb{W}_2^2(\mu^{\Delta,N},\mu^*)=0.
 	\]
 \end{theorem}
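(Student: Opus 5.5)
The plan is a three-term triangle inequality at a finite time $t_k$: each piece is handled by an earlier result, and the free time is fixed last. Fix $\mu\in\mathcal P_{p_0}(\mathbb R^d)$, say $\mu=\delta_0$, so that Assumptions \ref{2.1} and \ref{p} hold for the initial law. Take $\Delta\le\Delta^*$ and $T=t_k$ a grid point. I would work on the product space $(\mathbb R^d)^N$ and write
\begin{align*}
\mathbb W_2\bigl(\Pi^{\Delta,N},(\mu^*)^{\otimes N}\bigr)\le{}& \mathbb W_2\bigl(\Pi^{\Delta,N},\mathbf P^{\Delta,N}_{t_k}\mu^{\otimes N}\bigr)+\mathbb W_2\bigl(\mathbf P^{\Delta,N}_{t_k}\mu^{\otimes N},\mathbf P_{t_k}\mu^{\otimes N}\bigr)\\
&+\mathbb W_2\bigl(\mathbf P_{t_k}\mu^{\otimes N},(\mu^*)^{\otimes N}\bigr).
\end{align*}
At the end I will divide by $N$ to pass to a single marginal.

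The three terms are bounded as follows.

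\emph{First term.} Theorem \ref{4.5} bounds its square by $e^{-\bar\lambda^* t_k}\mathbb W_2^2(\mu^{\otimes N},\Pi^{\Delta,N})$. This in turn is at most $2e^{-\bar\lambda^* t_k}\bigl(N\int|x|^2\mu(\mathrm dx)+\int|\mathbf x|^2\Pi^{\Delta,N}(\mathrm d\mathbf x)\bigr)$.

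\emph{Second moment of $\Pi^{\Delta,N}$.} Since $\Pi^{\Delta,N}$ is the $\mathbb W_2$-limit of $\mathbf P^{\Delta,N}_{t_n}\mu^{\otimes N}$, lower semicontinuity of moments together with Lemma \ref{new1} gives $\int|\mathbf x|^2\,\mathrm d\Pi^{\Delta,N}\le CN$, with $C$ independent of $\Delta$ and $N$. So the first term squared is $\le CNe^{-\bar\lambda^* t_k}$.

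\emph{Third term.} Corollary \ref{joint} gives a bound $\le e^{-\lambda^*t_k}\,2N\bigl(\int|x|^2\mathrm d\mu+\int|x|^2\mathrm d\mu^*\bigr)\le CNe^{-\lambda^* t_k}$. Here I use that $\mu^*\in\mathcal P_2$. The hypotheses of Theorem \ref{4.2} hold by Remark \ref{remark}; if needed I would note that the stated condition on $L_1$ implies the one required there.

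\emph{Middle term.} Couple the two systems synchronously: same $\xi^i,B^i,\widetilde N^i$. Then its square is at most $\sum_{i}\mathbb E|x^i(t_k)-X^{i,N}(t_k)|^2\le CN(\varphi(N)+\Delta^{2/(2+\delta)})$ by Corollary \ref{3.4}. The constant $C=C(T)$ depends on $T=t_k$ but not on $N$ or $\Delta$.

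To pass from the joint laws to the marginals, project onto the first coordinate and use exchangeability. Take any coupling of $\Pi^{\Delta,N}$ and $(\mu^*)^{\otimes N}$. Each coordinate pair is then a coupling of $\mu^{\Delta,N}$ and $\mu^*$, so $N\,\mathbb W_2^2(\mu^{\Delta,N},\mu^*)\le\mathbb W_2^2(\Pi^{\Delta,N},(\mu^*)^{\otimes N})$. Dividing by $N$ yields
\[
\mathbb W_2^2(\mu^{\Delta,N},\mu^*)\le C\bigl(e^{-\bar\lambda^*T}+e^{-\lambda^*T}\bigr)+C_T\bigl(\varphi(N)+\Delta^{2/(2+\delta)}\bigr).
\]

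The main obstacle is that $C_T$ grows, exponentially, with $T$, so the two parts cannot be balanced uniformly. The way around it is a two-stage limit. Given $\varepsilon>0$, first choose $T$ large so that the exponential part is $<\varepsilon/2$. Then, with $T$ fixed, take $N$ large and $\Delta$ small so that the rest is $<\varepsilon/2$. One must also make sure that $T$ is a grid point for every small $\Delta$, or handle the mismatch. To do this, I would take $T$ first and only $\Delta=T/M$; alternatively, I would use $t_k=\lfloor T/\Delta\rfloor\Delta\in[T-1,T]$ and apply Corollary \ref{3.4} on $[0,T]$, which contains $t_k$. Since $\varepsilon$ is arbitrary, the $\limsup$ vanishes, which proves the theorem.
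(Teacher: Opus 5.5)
Your proposal is correct and follows essentially the same route as the paper. You use a three-term triangle inequality on $(\mathbb R^d)^N$ at a large fixed time: Theorem \ref{4.5} with Lemma \ref{new1} for the numerical term, Corollary \ref{joint} for the exact term, and Corollary \ref{3.4} under synchronous coupling for the middle term. You then pass to marginals via $\mathbb W_2^2(\mu^{\Delta,N},\mu^*)\le \frac1N\mathbb W_2^2(\Pi^{\Delta,N},(\mu^*)^{\otimes N})$ and choose $T$ first, then $(N,\Delta)$. Your extra details fill in steps the paper leaves implicit: the second-moment bound for $\Pi^{\Delta,N}$ via $\mathbb W_2$-convergence, the grid-point alignment, and the check that the stated condition implies the one needed for Theorem \ref{4.2}.
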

 \begin{proof}
 	For any \( t > 0 \) and \( \Delta \in (0, \Delta^{*}] \),
 	take \( k \in \mathbb{N}_0 \) such that \( t \in [t_k, t_{k+1}) \). Since \( X^{i, N}(t) = X^{i, N}(t_k) \) for \( t \in [t_k, t_{k+1}) \), using Theorem \ref{4.5} we obtain
 	\begin{align}\label{Pt}
 		\mathbb{W}_{2}^2\bigl(\mathbf{P}^{\Delta,N}_{t}\mu^{\otimes N},\,\Pi^{\Delta,N}\bigr) = \mathbb{W}_{2}^2\bigl(\mathbf{P}^{\Delta,N}_{t_k}\mu^{\otimes N},\,\Pi^{\Delta,N}\bigr)
 		\leq e^{-\bar{\lambda}^* t_k}\mathbb{W}_{2}^2\bigl(\mu^{\otimes N},\Pi^{\Delta,N}\bigr).
 	\end{align}
 	Then, by Lemma \ref{new1} we have
 	\begin{align*}
 		\frac{1}{N}\mathbb{W}_{2}^{2}\left(\mathbf{P}^{\Delta,N}_{t}\mu^{\otimes N},\,\Pi^{\Delta,N} \right) \leq Ce^{ -\bar{\lambda}^* t_k}.
 	\end{align*}
 	 Based on the inequality above and Corollary \ref{joint}, for any \(\varepsilon>0\), there exists a sufficiently large constant \(T>0\) such that
 	\[
 	\frac1N\mathbb{W}^{2}_{2}\left(\mathbf{P}_{T}\mu^{\otimes N},(\mu^*)^{\otimes N}\right)<\frac{\varepsilon}{9}, \quad \frac1N\mathbb{W}^{2}_{2}\left(\Pi^{\Delta,N},\mathbf{P}_T^{\Delta,N}\mu^{\otimes N}\right)<\frac{\varepsilon}{9}.
 	\]
 	Moreover, for $\xi^i\sim \mu$, Corollary   \ref{3.4} gives
 	\begin{align*}
 		\lim_{\substack{N\to\infty\\ \Delta\to0}}\frac1N\mathbb{W}^{2}_{2}\left(\mathbf{P}_T^{\Delta,N}\mu^{\otimes N},\mathbf{P}_{T}\mu^{\otimes N}\right) \leq &\lim_{\substack{N\to\infty\\ \Delta\to0}}\frac1N\sum_{i=1}^N
 		\mathbb E\left|X^{i,N}(T)-x^i(T)\right|^2\\
 		\leq &\lim_{\substack{N\to\infty\\ \Delta\to0}}
 		\sup_{1\le i\le N}
 		\mathbb E\left(
 		\sup_{0\le t\le T}|X^{i,N}(t)-x^i(t)|^2
 		\right), 
 	\end{align*}
 	where \(x^{i}_T\) and \(X^{i,N}_T\), \(i=1,2,\cdots,N\), are the exact solution of equation \eqref{eq2.8} and the numerical  solution generated by the tamed Euler scheme \eqref{discrete} with the same initial value $\xi^i$, respectively. Hence, for any fixed \(\varepsilon>0\), we can  select an integer \(N^{*}\) sufficiently large and \(\Delta_1\in(0,\Delta^{*})\) sufficiently small such that for all \(N>N^{*}\) and \(\Delta\in(0,\Delta_1)\),
 	\[
 	\frac1N\mathbb{W}^{2}_{2}\left(\mathbf{P}_T^{\Delta,N}\mu^{\otimes N},\mathbf{P}_{T}\mu^{\otimes N}\right) < \frac{\varepsilon}{9}.
 	\]
 	Thus,  applying the triangle  inequality, we obtain for any \(N>N^{*}\) and \(\Delta\in(0,\Delta_1)\),
 \[
 \begin{aligned}
 	\frac1N\mathbb{W}_2^2\left(\Pi^{\Delta,N},(\mu^*)^{\otimes N}\right) \le&
 	\frac3N\mathbb{W}_2^2\left(\Pi^{\Delta,N},\mathbf{P}_T^{\Delta,N}\mu^{\otimes N}\right)
 	+\frac3N\mathbb{W}_2^2\left(\mathbf{P}_T^{\Delta,N}\mu^{\otimes N},\mathbf{P}_{T}\mu^{\otimes N}\right) \\
 	&+\frac3N\mathbb{W}_2^2\left(\mathbf{P}_{T}\mu^{\otimes N},(\mu^*)^{\otimes N}\right)\\
 	<&\varepsilon.
 \end{aligned}
 \]

 		According to \cite{soni2025}, since the IPS forms an exchangeable system across time and the tamed Euler scheme is the discretization of IPS, $\Pi^{\Delta,N}$ is exchangeable and therefore all its one-particle marginals are identical. Denote this common marginal by $\mu^{\Delta,N}\in\mathcal{P}_2(\mathbb{R}^d)$, then by the following estimate
 	\begin{align}\label{key}
 		\mathbb{W}_2^2\bigl(\mu^{\Delta,N},\;\mu^*\bigr)
 		\leq& \frac{1}{N}\,	\mathbb{W}^2_{2}(\Pi^{\Delta,N},(\mu^*)^{\otimes N}),
 	\end{align}
 	the proof is complete.
 \end{proof}
 \section{Convergence rate of numerical invariant measure}
Section 5 establishes qualitative convergence of numerical invariant measure, while in Section 6, we strengthen this result by deriving an explicit convergence rate via uniform-in-time estimates. 
\subsection{Uniform-in-time PoC}
 
\begin{lemma}\label{uniform}
	Let Assumptions \ref{dos}, \ref{p}  and \ref{GAMMA} hold. Then 
	\[
	\sup_{t\ge 0}\mathbb E |x(t)|^{p}
	\le C \quad \text{for any}~p\in[2,p_0],
	\]
	where \(p_0\) is given in Assumption \ref{p}.
\end{lemma}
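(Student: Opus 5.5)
I would prove Lemma \ref{uniform} by the exponential-weight Itô argument of Lemma \ref{infty bound}, now applied to $e^{\lambda t}|x(t)|^p$ for $p\in[2,p_0]$. The goal is a differential inequality of the form
\[
\frac{\mathrm{d}}{\mathrm{d}t}\mathbb E|x(t)|^p\le -c\,\mathbb E|x(t)|^p+C,\qquad c>0,
\]
which gives $\mathbb E|x(t)|^p\le e^{-ct}\mathbb E|\xi|^p+C/c$ uniformly in $t$. Here $\mathbb E|\xi|^p<\infty$ by Assumption \ref{p}.

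\textbf{Step 1 (Itô expansion and drift).} Apply Itô's formula to $|x(t)|^p$ and take expectations; the martingale parts vanish after localisation. The drift contributes $p|x|^{p-2}\langle x,b(x,\mu_t^x)\rangle$. I use the structure $b=b_1+b_2$ from Assumption \ref{dos}. Taking $x_2=0$ in the monotonicity condition gives
\[
\langle x,b_1(x)\rangle\le -L_1(1+|x|^q)|x|^2+|x||b_1(0)|.
\]
The Lipschitz bound on $b_2$ gives
\[
\langle x,b_2(x,\mu)\rangle\le \varepsilon|x|^2+C_\varepsilon\bigl(1+\mathbb W_2^2(\mu,\delta_0)\bigr).
\]
The superlinear term $-L_1p|x|^{p+q}$ then dominates every polynomial term of order up to $p$ in $|x|$. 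This is the key advantage over the global-Lipschitz setting of Lemma \ref{infty bound}.

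\textbf{Step 2 (diffusion and jumps).} The Brownian correction $\frac{p(p-1)}2|x|^{p-2}|g|^2$ is bounded via \eqref{5.2} by $C|x|^{p-2}\bigl(1+|x|^2+\mathbb W_2^2(\mu,\delta_0)\bigr)$. For the jump term I use the remainder formula \eqref{eq2.5}, which bounds it by
\[
C\int_Z\bigl(|x|^{p-2}|\gamma|^2+|\gamma|^p\bigr)\nu(\mathrm dz).
\]
The first piece is handled by \eqref{5.2}. For the second I use Assumption \ref{GAMMA} with Hölder's inequality and $\nu(Z)<\infty$, exactly as in Lemma \ref{2.5}, which gives a bound $C\bigl(1+|x|^p+\mathbb W_2^p(\mu,\delta_0)\bigr)$.

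\textbf{Step 3 (measure terms).} I replace $\mathbb W_2^p(\mu_t^x,\delta_0)\le(\mathbb E|x(t)|^2)^{p/2}\le\mathbb E|x(t)|^p$. Mixed terms such as $|x|^{p-2}\mathbb W_2^2$ are treated with Young's inequality, which after expectation gives $\le\mathbb E|x|^p$ again.

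\textbf{Main obstacle.} The difficulty is that these measure-dependent and growth terms are of order $p$ with constants depending on $p$, $\lambda_3$, $L$ and $L_5$. They cannot all be absorbed by $-L_1p\,\mathbb E|x|^p$ alone under an explicit smallness condition. I would resolve this by using the extra dissipation $-L_1p\,\mathbb E|x|^{p+q}$. Young's inequality gives, for every $K>0$,
\[
K|x|^p\le \tfrac{L_1p}{2}|x|^{p+q}+C_K,
\]
so any state-dependent term $K\,\mathbb E|x|^p$ is absorbed, leaving $-c\,\mathbb E|x|^p+C$.

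The law-dependent terms $K\,\mathbb E|x|^p$ require care, since they are deterministic. Still, $\mathbb E|x|^p\le \delta\,\mathbb E|x|^{p+q}+C_\delta$ by Jensen/Young applied inside the expectation. So they are absorbed the same way.

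This yields the desired differential inequality with $c$ arbitrary, say $c=1$. Gronwall's inequality applied to $e^{t}\mathbb E|x(t)|^p$ then gives $\sup_{t\ge0}\mathbb E|x(t)|^p\le \mathbb E|\xi|^p+C$.
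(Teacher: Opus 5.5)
Your proposal is correct and follows essentially the paper's route: Itô's formula for $|x(t)|^p$, the remainder formula \eqref{eq2.5} with Assumption \ref{GAMMA} and $\nu(Z)<\infty$ for the jump part, $\mathbb{W}_2^p(\mu_t^x,\delta_0)\le\mathbb{E}|x(t)|^p$ for the law terms, and absorption of every order-$p$ term into the superlinear dissipation $-pL_1\mathbb{E}|x|^{p+q}$ (the paper's ``since $q>0$ there exists $K>0$'' step), with your exponential-weight closing being, if anything, a cleaner way to finish than the paper's Gronwall line. One small slip: the Lipschitz condition on $b_2$ only gives $\langle x,b_2(x,\mu)\rangle\le C(1+|x|^2+\mathbb{W}_2^2(\mu,\delta_0))$ with $C$ depending on $L_5$, not $\varepsilon|x|^2$ for arbitrary $\varepsilon$, but this is harmless because that term is absorbed by the $|x|^{p+q}$ dissipation exactly as you describe.
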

 \begin{proof}
 	To begin with, by Assumption \ref{dos} and Young's inequality, we get
 	\[
 	\begin{aligned}
 		\langle x,b(x,\mu)\rangle
 		&\le -L_1|x|^{q+2}+C(1+|x|^2+\mathbb{W}_2^2(\mu,\delta_0)).
 	\end{aligned}
 	\]
 	
 	 Similar as the proof in Lemma \ref{2.5}, by applying
 	Itô's formula to \(|x(t)|^{p}\), using remainder formula \eqref{eq2.5}, Young's inequality and Assumption \ref{GAMMA},
 	we obtain
 	\begin{align*}
 		\mathbb E|x(t)|^{p}
 		\leq&\mathbb{E}\left| \xi\right|^p+ \int_{0}^{t}\left(  -pL_1\mathbb E|x(s)|^{p+q}
 		+C\mathbb E|x(s)|^{p}\right) \mathrm{d}s+C.
 	\end{align*}
 	Since \(q>0\), there exist a constant \(K>0\) such that
 		\begin{align*}
 		\mathbb E|x(t)|^{p}
 		\leq&\mathbb{E}\left| \xi\right|^p-K \int_{0}^{t}\mathbb E|x(s)|^{p} \mathrm{d}s+C.
 	\end{align*}
 	The proof is complete with Gr{\"o}nwall's inequality.
 \end{proof}
 \begin{theorem}\label{uniform-in time PoC}
 	Let  Assumptions \ref{g}, \ref{dos}, \ref{p} and \ref{GAMMA} hold with $p_0>4$ and
 	$2L_{1}-6\lambda_3-1-3L_{5}>0$. Then, there exists a constant $C>0$, independent of $t$ and $N$, such that 
 	\begin{align*}
 		\sup_{1 \leq i \leq N}\sup_{t\geq 0}\mathbb{E}\left|x^i(t)-x^{i,N}(t) \right|^2\leq C\varphi(N),
 	\end{align*}
 	where  $\varphi(N)$ is given in Theorem \ref{2.6}.
 	
 \end{theorem}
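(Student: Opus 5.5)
Since the claim bounds $\sup_{t\ge0}\mathbb{E}|e^{i,N}(t)|^2$ rather than the supremum inside the expectation, I will not use the B-D-G/Kunita route of Theorem \ref{2.6}. Instead I apply Itô's formula to $e^{\lambda t}|e^{i,N}(t)|^2$ with $e^{i,N}(t)=x^i(t)-x^{i,N}(t)$ and a suitable $\lambda>0$, and take expectations directly. The stochastic integrals against $\mathrm{d}B^i$ and $\widetilde N^i$ are then true martingales with zero mean; this is justified by the finite-time moment bounds of Lemma \ref{2.5} and the analogous bound for the NIPS. This leaves
\begin{align*}
\mathbb{E}\big(e^{\lambda t}|e^{i,N}(t)|^2\big)
=\mathbb{E}\int_0^t e^{\lambda s}\Big(&\lambda|e^{i,N}(s)|^2+2\langle e^{i,N}(s),b(x^i(s),\mu_s^{x})-b(x^{i,N}(s),\mu_s^{x,N})\rangle\\
&+|g(\cdot)-g(\cdot)|^2+\int_Z|\gamma(\cdot)-\gamma(\cdot)|^2\nu(\mathrm{d}z)\Big)\mathrm{d}s,
\end{align*}
since $e^{i,N}(0)=0$.

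For the drift I use the one-sided estimate in Remark \ref{remark}, and for the diffusion and jump terms I use Assumption \ref{g}. Together they bound the integrand by $(\lambda-2L_1+1+L_5+2\lambda_3)|e^{i,N}|^2+(L_5+2\lambda_3)\mathbb{W}_2^2(\mu_s^{x},\mu_s^{x,N})$. For the measure term I use the splitting \eqref{w2} through the empirical measure $\mu_s^N$ of the i.i.d. copies $x^j(s)$. To keep the constants tight I apply it in the form $\mathbb{W}_2^2(\mu_s^x,\mu_s^{x,N})\le(1+\epsilon)\frac1N\sum_j|e^{j,N}(s)|^2+(1+\epsilon^{-1})\mathbb{W}_2^2(\mu_s^x,\mu_s^N)$. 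Exchangeability gives $\mathbb{E}\frac1N\sum_j|e^{j,N}(s)|^2=\mathbb{E}|e^{i,N}(s)|^2$. The total coefficient of $\mathbb{E}|e^{i,N}(s)|^2$ is then roughly $\lambda-(2L_1-1-2L_5-4\lambda_3)$ plus an $\epsilon$ term. This is negative for small $\lambda,\epsilon$ under the hypothesis $2L_1-6\lambda_3-1-3L_5>0$, up to how one distributes the constants. Choosing $\lambda$ so that this coefficient is $\le 0$ yields
\[
\mathbb{E}|e^{i,N}(t)|^2\le C\int_0^t e^{-\lambda(t-s)}\,\mathbb{E}\mathbb{W}_2^2(\mu_s^{x},\mu_s^N)\,\mathrm{d}s .
\]

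It remains to show $\sup_{s\ge0}\mathbb{E}\mathbb{W}_2^2(\mu_s^x,\mu_s^N)\le C\varphi(N)$. I will invoke the Fournier--Guillin type rate (Theorem 5.8 in \cite{CarmonaI2018}). It bounds $\mathbb{E}\mathbb{W}_2^2(\mu,\mu^N)$ by $C(\int|x|^{p}\mu(\mathrm{d}x))^{2/p}\varphi(N)$ for some $p>4$, with $C$ depending only on $d,p$. Lemma \ref{uniform} provides $\sup_{s\ge0}\mathbb{E}|x(s)|^{p_0}\le C$ with $p_0>4$, which makes this rate uniform in $s$. Then $\int_0^t e^{-\lambda(t-s)}\mathrm{d}s\le 1/\lambda$ gives the claim, uniformly in $i$ by exchangeability.

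The main obstacle is the constant bookkeeping in the dissipativity step. The measure-dependence contributions of $b$, $g$ and $\gamma$, with coefficients $L_5/2$, $\lambda_3$ and $\lambda_3$, enter through the empirical-measure splitting with a factor larger than one. These contributions must be absorbed by $2L_1$ so that the stated condition $2L_1-6\lambda_3-1-3L_5>0$ suffices. If the crude factor $2$ in \eqref{w2} is used, the coefficient becomes $2L_1-1-3L_5-6\lambda_3$. This matches the hypothesis exactly, so I will use \eqref{w2} as stated and then take $\lambda=\tfrac12(2L_1-6\lambda_3-1-3L_5)$. A secondary point is checking that the martingale terms vanish in expectation, which follows from the finite-time moment bounds.
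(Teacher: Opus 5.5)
Your proposal is correct and follows the paper's proof essentially step for step. The shared steps are: Itô's formula for $e^{\lambda t}|e^{i,N}(t)|^2$; Remark~\ref{remark} and Assumption~\ref{g} to obtain the coefficients $(\lambda-2L_1+2\lambda_3+1+L_5)$ and $(L_5+2\lambda_3)$; the splitting \eqref{w2} with exchangeability, which gives $\lambda-2L_1+6\lambda_3+1+3L_5$; and finally the Carmona--Delarue empirical-measure rate, made uniform in $s$ by Lemma~\ref{uniform}. The only differences are cosmetic: the paper takes $\lambda=2L_1-6\lambda_3-1-3L_5$ rather than half of it. Also, your $\epsilon$-weighted splitting, although you did not pursue it, would already work under the weaker condition $2L_1-1-2L_5-4\lambda_3>0$.
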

 \begin{proof}
 	Let $ e^{i,N}(t):=x^i(t)-x^{i,N}(t) $ for any $t\geq 0$. Using It{\^o}'s formula, Remark \ref{remark}  and Assumption \ref{g}, we get for $\lambda>0$,
 	\begin{align*}
 		&\mathbb{E}\left( 	e^{\lambda t}|e^{i,N}(t)|^{2}\right) \\
 		\leq& \mathbb{E}\bigg[ \int_{0}^{t} e^{\lambda s}\left((\lambda-2L_{1}+2\lambda_3+1+L_{5}) |e^{i,N}(s)|^{2}+(L_{5}+2\lambda_3)\mathbb{W}^{2}_{2}(\mu_s^{x^i},\mu_s^{x,N})\right) \mathrm{d}s\bigg] \\
 		\leq &\int_{0}^{t} e^{\lambda s}\left[ (\lambda-2L_{1}+6\lambda_3+1+3L_{5}) \mathbb{E} |e^{i,N}(s)|^{2}+2(L_{5}+2\lambda_3)\mathbb{E}\mathbb{W}^{2}_{2}(\mu_s^{x^i},\mu_s^{N})\right]  \mathrm{d}s,
 	\end{align*}
 	where we also apply \eqref{w2} to the final estimate, $\mu_s^{N}=\frac{1}{N}\sum_{j=1}^{N}\delta_{x^{j}(s)} $. Choose  $\lambda=2L_{1}-6\lambda_3-1-3L_{5}>0$, then it follows that
 	\begin{align*}
 		\mathbb{E} |e^{i,N}(t)|^{2}
 		\leq 2(L_{5}+2\lambda_3)\int_{0}^{t} e^{-\lambda (t-s)} \mathbb{E}\mathbb{W}^{2}_{2}(\mu_s^{x^i},\mu_s^{N})\mathrm{d}s.
 	\end{align*}
 	Therefore, by the standard empirical measure estimate in \cite{CarmonaI2018} and  Lemma \ref{uniform}, we can get the desired assertion.
 \end{proof}
 \subsection{Uniform-in-time convergence of tamed Euler method}
 In order to prove the uniform
 moment boundedness of the numerical solution, we follow the strategy of \cite{BAO2025} and need the additional assumption as follows.
 \begin{assumption}\label{bound}
 	There exists a constant $M_0>0$ such that for any $ x\in \mathbb{R}^d $ and $
 	\mu \in \mathcal P_2(\mathbb{R}^d)  $,
 	\begin{align*}
 		\left|g(x,\mu) \right|^2\vee \int_{Z}\left|\gamma(x,\mu,z) \right|^2\nu(\mathrm{d}z)\leq M_0.
 	\end{align*}
 \end{assumption}
 
 \begin{lemma}\label{new}
 	Let Assumptions \ref{dos},\ref{p}  and \ref{bound} hold and suppose $\kappa:=2\left( L_1-1- L_5\right) >0$.
 	Then, for any $\Delta\in (0,\Delta^{**}] $, $\Delta^{**}:= 1\land \frac{\kappa^2}{4\left(24L_0+2+10L_5\right)^2 } \land \frac{1}{\frac{\kappa}{4} + 6L_5 },$
 	\begin{align*}
 		\sup_{k\in \mathbb{N}_0}\mathbb{E}\left|  X^{i,N}(t_{k})\right|^{p}\leq C \quad \text{for any $p\in\left[ 2,p_0\right]$,}
 	\end{align*}
 	where $p_0$ is given in Assumption \ref{p}.
 \end{lemma}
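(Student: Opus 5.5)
We prove a one-step conditional moment inequality for $|X^{i,N}(t_k)|^p$ and iterate it, following the strategy of \cite{BAO2025}. The $p=2$ case goes as in Lemma \ref{new1}, with Assumption \ref{bound} replacing \eqref{5.2}. The noise contributions are then bounded by the constant $M_0$ and no longer feed the dissipativity budget; this is why only $\kappa=2(L_1-1-L_5)>0$ is needed, without the $\lambda_3$-terms. For general $p\in[2,p_0]$ we write the scheme as $X^{i,N}(t_{k+1})=\Phi_k+\Theta_k$, with deterministic-given-$\mathcal F_{t_k}$ part
\[
\Phi_k:=X^{i,N}(t_k)+b_\Delta(X^{i,N}(t_k),\mu_{t_k}^{X,N})\Delta
\]
and martingale increment $\Theta_k$ (Brownian plus compensated Poisson).

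\emph{Step 1 (drift part).} Exactly as in the computation of $I_1$ in Lemma \ref{new1}, the taming $1/(1+\sqrt\Delta|x|^q)$ together with $\Delta\le\Delta^{**}$ (so that $24L_0\sqrt\Delta$, $10L_5\sqrt\Delta$ are absorbed by $\kappa/2$) gives
\[
|\Phi_k|^2\le \Bigl(1-\tfrac{\kappa}{2}\Delta\Bigr)|X^{i,N}(t_k)|^2+C_1L_5\Delta\,\mathbb W_2^2(\mu_{t_k}^{X,N},\delta_0)+C\Delta .
\]

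\emph{Step 2 (noise part).} Conditionally on $\mathcal F_{t_k}$, $\Theta_k$ has mean zero. Its conditional moments are $\mathbb E(|\Theta_k|^2\mid\mathcal F_{t_k})\le (m+1)M_0\Delta$ and, for $r\in[2,p]$, $\mathbb E(|\Theta_k|^r\mid\mathcal F_{t_k})\le C\Delta$ by Kunita's inequality. For the jump integral the order-$\Delta$ (not $\Delta^{r/2}$) bound requires $\int_Z|\gamma|^r\nu(\mathrm dz)$ to be bounded. Under Assumption \ref{bound} alone this is not immediate. We plan to obtain it from Assumption \ref{GAMMA} when it is available, or, if needed, to interpolate using $\nu(Z)<\infty$. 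This is the point we expect to need care. We then expand $|\Phi_k+\Theta_k|^p$ by the binomial/Taylor estimate \eqref{eq2.5}: the first-order term vanishes in conditional expectation, and the remaining terms are bounded by $C\sum_{r=2}^{p}|\Phi_k|^{p-r}\mathbb E(|\Theta_k|^r\mid\mathcal F_{t_k})\le C\Delta(1+|\Phi_k|^{p-2})$. Raising Step 1 to the power $p/2$ with $(1+a)^{p/2}$-type bounds and Young's inequality ($|\Phi_k|^{p-2}\le\epsilon|\Phi_k|^p+C_\epsilon$) then yields
\[
\mathbb E\bigl(|X^{i,N}(t_{k+1})|^p\mid\mathcal F_{t_k}\bigr)\le\Bigl(1-\tfrac{p\kappa}{8}\Delta\Bigr)|X^{i,N}(t_k)|^p+C\Delta\,\mathbb W_2^p(\mu_{t_k}^{X,N},\delta_0)+C\Delta .
\]
Alternatively, the mean-field term in this inequality may be kept as $\mathbb W_2^2\cdot|X|^{p-2}$.

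\emph{Step 3 (closing).} We take expectations and use exchangeability, $\mathbb E\,\mathbb W_2^p(\mu_{t_k}^{X,N},\delta_0)\le \mathbb E|X^{i,N}(t_k)|^p$. The resulting measure coefficient is $L_5$-dependent and must be dominated by the contraction rate. This is the main obstacle: the coefficient in front of $\mathbb W_2$ must be controlled by the same $\kappa$, and this is where the condition $\Delta\le 1/(\kappa/4+6L_5)$ enters. If a direct comparison fails, we instead bound the $\mathbb W_2$ term by Young's inequality as $\epsilon|X|^p+C_\epsilon\,\mathbb E|X^{j,N}(t_k)|^2$ and use the uniform second-moment bound obtained in the $p=2$ step, which only requires $\kappa>0$. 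The final recursion $a_{k+1}\le(1-c\Delta)a_k+C\Delta$ is then iterated as at the end of Lemma \ref{new1}, giving $\sup_k a_k\le \mathbb E|\xi|^p+C/c$, which is finite by Assumption \ref{p}.
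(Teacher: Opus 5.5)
\paragraph{The gap: the measure coefficient is never dominated.} The gap is in Step~3, and it already appears at $p=2$.

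Your Step~1 puts $1-\frac{\kappa}{2}\Delta$ in front of $|X^{i,N}(t_k)|^2$. It puts a separate cost $C_1L_5\Delta$ in front of $\mathbb{W}_2^2(\mu_{t_k}^{X,N},\delta_0)$; with the estimates of Lemma~\ref{new1} this is about $2L_5(1+2\Delta)\Delta$. Note that $\kappa=2(L_1-1-L_5)$ has already paid for the $2L_5|X^{i,N}(t_k)|^2$ produced by $|b_2|^2$, so the $\mathbb{W}_2^2$ cost is extra. By exchangeability, $\mathbb{E}\,\mathbb{W}_2^2(\mu_{t_k}^{X,N},\delta_0)=\mathbb{E}|X^{i,N}(t_k)|^2$. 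Hence your second-moment recursion has net rate $\frac{\kappa}{2}-C_1L_5$, and $\kappa>0$ does not fix its sign.

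So the claim that the $p=2$ step ``only requires $\kappa>0$'' is unsupported. Passing to the power $p/2$ cannot help: done carefully, the net rate just becomes $\frac{p}{2}\bigl(\frac{\kappa}{2}-C_1L_5\bigr)$.

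The fallback fails as well. $\mathbb{W}_2^2(\mu_{t_k}^{X,N},\delta_0)=\frac1N\sum_{j=1}^N|X^{j,N}(t_k)|^2$ is a random variable of the same homogeneity as $|X^{i,N}(t_k)|^2$. Young's inequality applied to $\mathbb{W}_2^2(\mu_{t_k}^{X,N},\delta_0)\,|X^{i,N}(t_k)|^{p-2}$ therefore leaves a term $C_\epsilon\mathbb{W}_2^p(\mu_{t_k}^{X,N},\delta_0)$. Its expectation is controlled by $\mathbb{E}|X^{i,N}(t_k)|^p$ (Jensen and exchangeability), not by second moments, so the argument is circular.

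The restriction $\Delta\le 1/(\frac{\kappa}{4}+6L_5)$ plays no role in comparing these two coefficients. In the paper it only makes the weight $1-(\frac{\kappa}{4}+\beta_\Delta)\Delta$ nonnegative, where $\beta_\Delta=2L_5(1+2\Delta)\le 6L_5$.

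\paragraph{What is needed.} The paper's device is to keep the two coefficients together. Start from $|X^{i,N}(t_{k+1})|^2\le a|X^{i,N}(t_k)|^2+b\,\mathbb{W}_2^2(\mu_{t_k}^{X,N},\delta_0)+C_1\Delta+I_2$ with $a,b\ge0$. The binomial theorem, weighted Young, $\mathbb{W}_2^{2p}(\mu_{t_k}^{X,N},\delta_0)\le\frac1N\sum_{j}|X^{j,N}(t_k)|^{2p}$ and exchangeability then give the factor $(a+b)^p$ in \eqref{J1}. Everything reduces to showing $a+b\le 1-c\Delta$.

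Be aware that the paper does not actually establish this. Its passage from the line with $\frac{-1}{1+\sqrt\Delta}(2L_1-8L_0\sqrt\Delta)$ to the one asserting $a=1-(\frac{\kappa}{4}+\beta_\Delta)\Delta$, $b=\beta_\Delta\Delta$ counts $\beta_\Delta|X^{i,N}(t_k)|^2$ twice, since the $2L_5(1+2\Delta)|X^{i,N}(t_k)|^2$ there is already inside $\kappa$. With the crude bound $\frac{|x|^q}{1+\sqrt\Delta|x|^q}\ge\frac{1}{1+\sqrt\Delta}$ one only gets $a+b\le 1-(\frac{\kappa}{4}-\beta_\Delta)\Delta$, which is exactly your obstacle.

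The margin has to come from the tamed superlinear dissipation that this bound discards. The rate $\frac{2L_1+(2L_1-8L_0\sqrt\Delta)|x|^q}{1+\sqrt\Delta|x|^q}$ tends, as $|x|\to\infty$, to $\frac{2L_1-8L_0\sqrt\Delta}{\sqrt\Delta}$. For $\Delta\le\Delta^{**}$ this limit is at least $\frac53(24L_0+2+10L_5)$. That exceeds $2+4L_5(1+2\Delta)+8L_0\Delta$, which is the $|X^{i,N}(t_k)|^2$-costs plus $\beta_\Delta$. So, up to an additive constant, one does obtain $a+b\le 1-c\Delta$ with $c>0$ independent of $\Delta$.

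\paragraph{The jump moments in Step~2.} The jump-moment issue you flagged is genuine, and neither of your remedies works. The condition $\nu(Z)<\infty$ bounds lower $L^r(\nu)$-norms by higher ones, not conversely. Assumption~\ref{GAMMA} is not a hypothesis of the lemma, and it gives growth, not boundedness.

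Jumps of size $(1+|x|)^a$ at rate $(1+|x|)^{-2a}$ satisfy Assumption~\ref{bound}. Yet for $a>1$ they give $\mathbb{E}|X^{i,N}(t_1)|^{p_0}\gtrsim\Delta\,\mathbb{E}(1+|\xi^i|)^{a(p_0-2)}$, which can be infinite under Assumption~\ref{p}. So for $p>2$ a bound on $\int_Z|\gamma|^{p}\nu(\mathrm{d}z)$ has to be added as a hypothesis. The paper's $J_3$ estimate, which invokes only \eqref{delta}, passes over this point as well.
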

 \begin{proof}
 	From scheme \eqref{discrete} and equation \eqref{*},
 	using Assumption \ref{dos} gives
 	\begin{align*}
 		I_1\leq	& 2\left\langle X^{i,N}(t_{k}), \frac{b_1\left( X^{i,N}(t_{k})\right) }{1+\sqrt{\Delta}\left|X^{i,N}(t_{k}) \right|^q}+b_2\left(X^{i,N}(t_{k}),\mu_{t_k}^{X,N}\right)  \right\rangle\\
 		&+ \Delta \left| \frac{b_1\left( X^{i,N}(t_{k})\right) }{1+\sqrt{\Delta}\left|X^{i,N}(t_{k}) \right|^q}+b_2\left(X^{i,N}(t_{k}), \mu_{t_k}^{X,N}\right) \right| ^2\\
 		\leq &\frac{\left|X^{i,N}(t_{k}) \right|^{2}}{1+\sqrt{\Delta}\left|X^{i,N}(t_{k}) \right|^q}\Bigg(-2L_1\left( 1+\left|X^{i,N}(t_{k}) \right|^{q}\right) + \frac{8\Delta L_0\left( 1\!+\left|X^{i,N}(t_{k}) \right|^{2q}\right)  }{ 1\!+\sqrt{\Delta}\left|X^{i,N}(t_{k}) \right|^q}\Bigg)\\
 		&+2\left|X^{i,N}(t_{k}) \right|^{2} +2L_{5}(1+2\Delta) \left(\left|X^{i,N}(t_{k}) \right|^{2}+ \mathbb{W}_2^2(\mu_{t_k}^{X,N},\delta_0)\right)\\
 		&+(1+4\Delta)\left|b_1(0) \right|^2 +\!2(1+2\Delta)\left|b_2(0,\delta_0) \right|^2  \\
 		\leq &\frac{-\left|X^{i,N}(t_{k}) \right|^q}{1+\sqrt{\Delta}\left|X^{i,N}(t_{k}) \right|^q}\left(2L_1\!-8L_{0}\sqrt{\Delta} \right)\left|X^{i,N}(t_{k}) \right|^2+ 2L_{5}(1+2\Delta)\mathbb{W}_2^2(\mu_{t_k}^{X,N},\delta_0)\\
 		&+2(4L_{0}\Delta+\!1+L_{5}(1+2\Delta))\left|X^{i,N}(t_{k}) \right|^2+C.
 	\end{align*}
 	Thus, for any $\Delta\in (0,\Delta^{**}] $,
 	\begin{align*}
 		I_1\Delta 
 		\leq &\bigg[ \frac{-1}{1+\sqrt{\Delta}}\left(2L_1-8L_{0}\sqrt{\Delta}\right)\left|X^{i,N}(t_{k}) \right|^2+2(4L_{0}\Delta+\!1+L_{5}(1+2\Delta))\left|X^{i,N}(t_{k}) \right|^2\\
 		&+ 2L_{5}(1+2\Delta)\mathbb{W}_2^2(\mu_{t_k}^{X,N},\delta_0)\bigg]\Delta+ C_1\Delta\\
 		\leq &\bigg[ \frac{-1}{1+\sqrt{\Delta}}\left(\kappa-\sqrt{\Delta}\left( 24L_{0}+2+10 L_5\right)  \right)\left|X^{i,N}(t_{k}) \right|^2\\
 		&+2L_{5}(1+2\Delta)\left( \mathbb{W}_2^2(\mu_{t_k}^{X,N},\delta_0)-\left|X^{i,N}(t_{k}) \right|^2\right) \bigg]\Delta+ C_1\Delta
 	\end{align*}
 	for some constant $C_1>0$, where $\kappa=2\left( L_1-1- L_5\right)  >0$. For any $\Delta\in (0,\Delta^{**}]$, 
 	\begin{align*}
 		\sqrt{\Delta}\left(24L_{0}+2+10 L_5 \right)\leq \frac{\kappa}{2}.
 	\end{align*}
 	Owing to $\sqrt{\Delta}\leq1 $ for $\Delta\in (0,\Delta^{**}]$, we infer that
 	\begin{align*}
 		I_1\Delta
 		\leq &\bigg[ -\left( \frac{\kappa}{4}+\beta_\Delta\right)\left|X^{i,N}(t_{k}) \right|^2+\beta_\Delta\mathbb{W}_2^2(\mu_{t_k}^{X,N},\delta_0) \bigg]\Delta+ C_1\Delta,
 	\end{align*}
 	where  $\beta_\Delta:=2L_{5}(1+2\Delta) $. The above estimate enables us to deduce that
 	\begin{align*}
 		\left| X^{i,N}(t_{k+1}) \right|^2 \leq\left[1-\left( \frac{\kappa}{4}+\beta_\Delta\right)\Delta \right] \left| X^{i,N} (t_{k}) \right|^2 +\beta_\Delta\mathbb{W}_2^2(\mu_{t_k}^{X,N},\delta_0) \Delta+ C_1\Delta+I_2,
 	\end{align*}
 	where $ 1-\left( \frac{\kappa}{4}+\beta_\Delta\right)\Delta\geq0$ by taking $\Delta\in (0,\Delta^{**}]$ into consideration.
 	\par  Then	for any integer $p\geq 2$,
 	\begin{align*}
 		&\left| X^{i,N}(t_{k+1}) \right|^{2p} \\
 		\leq&\left\lbrace \left[1-\left( \frac{\kappa}{4}+\beta_\Delta\right)\Delta \right] \left| X^{i,N} (t_{k}) \right|^2 +\beta_\Delta\mathbb{W}_2^2(\mu_{t_k}^{X,N},\delta_0) \Delta\right\rbrace ^p\\
 		&+\!p\left\lbrace \left[1-\!\left( \frac{\kappa}{4}+\!\beta_\Delta\right)\Delta \right] \left| X^{i,N} (t_{k}) \right|^2\! +\beta_\Delta\mathbb{W}_2^2(\mu_{t_k}^{X,N},\delta_0) \Delta\right\rbrace ^{p-1}\!\left(  C_1\Delta\!+I_2\right) \\
 		&+\!\sum_{n=0}^{p-2}C^n_p\left\lbrace \left[1-\!\left( \frac{\kappa}{4}+\!\beta_\Delta\right)\Delta \right] \left| X^{i,N} (t_{k}) \right|^2 \!+\beta_\Delta\mathbb{W}_2^2(\mu_{t_k}^{X,N},\delta_0) \Delta\right\rbrace^n\!\left(  C_1\Delta+\!I_2\right)^{p-n}\\
 		=&:J_1+J_2+J_3.
 	\end{align*}
 	Given the \(\sigma\)-algebra \(\mathcal{F}^{N}_{0}\), which is generated by \(\xi^{1},\cdots,\xi^{N}\). Using the binomial theorem and the Young inequality yields that
 	\begin{align}\label{J1}
 		\mathbb{E}\left(J_1 \middle| \mathcal{F}^{N}_{0} \right)=&\mathbb{E}\left[  \sum_{n=0}^{p} C^{n}_{p} \big( 1 -\! (\kappa/4 +\! \beta_{\Delta}) \Delta \big)^{n} (\beta_{\Delta} \Delta)^{p-n} \left| X^{i,N}(t_{k}) \right|^{2n}\mathbb{W}_2^{2(p-n)} (\mu_{t_k}^{X,N},\delta_0)\middle| \mathcal{F}^{N}_{0}\right] \notag \\
 		\leq &\sum_{n=0}^{p} C^{n}_{p} \big( 1 - (\kappa/4 + \beta_{\Delta}) \Delta \big)^{n} (\beta_{\Delta} \Delta)^{p-n} \notag\\
 		&\times \left( \frac{n}{p} \mathbb{E} \left(\left| X^{i,N}(t_{k}) \right|^{2p} \middle|\mathcal{F}^{N}_{0} \right) + \frac{p-n}{p} \mathbb{E} \left( \mathbb{W}_2^{2p} (\mu_{t_k}^{X,N},\delta_0)\middle| \mathcal{F}^{N}_{0} \right) \right)\notag \\
 		= &(1 - \kappa \Delta / 4)^{p} \mathbb{E} \left( \left| X^{i,N}(t_{k}) \right|^{2p} \middle|\mathcal{F}^{N}_{0} \right) \notag\\
 		\leq& (1 - \kappa \Delta / 4) \mathbb{E} \left(\left| X^{i,N}(t_{k}) \right|^{2p} \middle| \mathcal{F}^{N}_{0} \right),
 	\end{align}
 	where in the second identity we used the fact that \(X^{i,N}(t_{k})\) and \(X^{j,N}(t_{k})\) are identically distributed given \(\mathcal{F}^{N}_{0}\), and the final estimate follows directly from \(1 - \kappa \Delta / 4 \in (0,1)\). Next, since 
 	\begin{equation}\label{delta}
 		\begin{split}
 			&\mathbb{E}\left( \triangle B^{i}_k\middle| \mathcal{F}^{N}_{0}\right)  = 0,\quad
 			\mathbb{E}\left( |\triangle B^{i}_k|^{2} \middle|\mathcal{F}^{N}_{0}\right)  =m \Delta,\\
 			&\mathbb{E}\left(\int_{t_{k}}^{t_{k+1}}\int_{Z} \gamma\left( X^{i,N}(t_{k}),\mu_{t_k}^{X,N},z\right) \widetilde{N}^i(\mathrm{d}s,\mathrm{d}z) \middle| \mathcal{F}^{N}_{0}\right)  =0,\\
 			&\mathbb{E}\left(\left| \int_{t_{k}}^{t_{k+1}}\int_{Z} \gamma\left( X^{i,N}(t_{k}),\mu_{t_k}^{X,N},z\right) \widetilde{N}^i(\mathrm{d}s,\mathrm{d}z)\right| ^2 \middle| \mathcal{F}^{N}_{0}\right) \\
 			=&\mathbb{E}\left( \int_{Z}\left| \gamma\left( X^{i,N}(t_{k}),\mu_{t_k}^{X,N},z\right)\right| ^2  \nu(\mathrm{d}z)\middle| \mathcal{F}^{N}_{0}\right)\Delta,
 		\end{split}
 	\end{equation}
 	it follows from Assumption \ref{bound} and Young's inequality that there is a constant \(C_{2} > 0\) such that
 	\begin{align}\label{J2}
 		\mathbb{E} \left(J_2\middle| \mathcal{F}^{N}_{0} \right) \leq& p C_{2} \Delta \, \mathbb{E} \left( \left( (1 -\! (\kappa/4 \!+ \beta_{\Delta}) \Delta) \left|  X^{i,N}(t_{k})\right|^{2} + \!\beta_{\Delta}\mathbb{W}_2^{2} (\mu_{t_k}^{X,N},\delta_0) \Delta \right)^{p-1} \middle| \mathcal{F}^{N}_{0} \right)\notag \\
 		\leq &\frac{\kappa \Delta}{16} \mathbb{E} \left( \left| X^{i,N}(t_{k})\right|^{2p} \middle| \mathcal{F}^{N}_{0} \right) + C_{2} \Delta. 
 	\end{align}
 	Using \eqref{delta} again and by Young’s inequality, we obtain
 	\begin{align}\label{J3}
 		\mathbb{E} \left(J_3\middle| \mathcal{F}^{N}_{0} \right)
 		\leq &\frac{\kappa \Delta}{16} \mathbb{E} \left( \left| X^{i,N}(t_{k})\right|^{2p} \middle| \mathcal{F}^{N}_{0} \right) + C_{3} \Delta
 	\end{align}
 	for some constant $C_{3}>0 $. Combining \eqref{J1}, \eqref{J2} and	\eqref{J3}, we conclude by induction that there exists a constant $C_1^*>0$ such that
 	\begin{align*}
 		\mathbb{E}\left( \left| X^{i,N}(t_{k+1})\right|^{2p}\middle| \mathcal{F}^{N}_{0}\right)  \leq& \left( 1-\frac{\kappa \Delta}{8}\right) \mathbb{E}\left( \left| X^{i,N}(t_{k})\right|^{2p}\middle| \mathcal{F}^{N}_{0}\right)+\left( C_1+C_2+C_3\right) \Delta\\
 		\leq& C_1^*\left(1+ \left| X^{i,N}(0)\right| 
 		^{2p}\right).
 	\end{align*}
 	We have for all $k\geq 0$ that
 	\begin{align*}
 		\mathbb{E}\left( \left| X^{i,N}(t_{k})\right|^{2p}\middle| \mathcal{F}^{N}_{0}\right)
 		\leq C^*\left(1+ \left| X^{i,N}(0)\right|	^{2p}\right).
 	\end{align*}
 	According to \cite{BAO2025} Lemma 4.1, from H{\"o}lder's inequality and the inequality $(a+b)^\theta\leq a^\theta+b^\theta$ for $a,b>0$ and $\theta\in(0,1]$, we can infer that for any $p\geq 2$ which is not an even integer,
 	\begin{align*}
 		\mathbb{E}\left( \left| X^{i,N}(t_{k})\right|^{p}\middle| \mathcal{F}^{N}_{0}\right)
 		\leq C^*\left(1+ \left| X^{i,N}(0)\right|	^{p}\right).
 	\end{align*}
 	Thus, the final estimate can be desired.
 \end{proof}
 
 \begin{theorem}\label{uniform-in time convergence}
 	Let  Assumptions \ref{g}, \ref{dos}, \ref{p}, \ref{GAMMA} and \ref{bound} hold with $2L_1-2L_5-8\lambda_{3}-3>0$ and $2(2q+1)\vee \frac{2q(2+\delta)}{\delta} \le p_0$, $\delta\in(0,1)$. Then, for any  $\Delta\in (0, \Delta^{**}] $, there exists a constant $C>0$, independent of $t$, $N$ and $\Delta$, such that 
 	\begin{align*}
 		\sup_{1 \leq i \leq N}\sup_{t\geq 0}\mathbb{E}\left|x^{i,N}(t)-X^{i,N}(t) \right|^2\leq C \Delta^{\frac{2}{2+\delta}},
 	\end{align*}
 	where $q$ is given in Assumption \ref{dos}, $p_0$ is given in Assumption \ref{p} and $\Delta^{**}$ is given in Lemma \ref{new}.
 \end{theorem}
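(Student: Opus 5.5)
We will follow the finite-horizon argument of Theorem \ref{3.3}, but replace Gr\"onwall's inequality by an exponentially weighted It\^o estimate, as in Theorem \ref{uniform-in time PoC}. The key is to exploit the dissipativity of $b$ and not merely its local Lipschitz continuity. Set $e^i(t)=x^{i,N}(t)-X^{i,N}(t)$, with $X^{i,N}$ the continuous interpolation \eqref{eq3.1}. For $\lambda>0$ we apply It\^o's formula to $e^{\lambda t}|e^i(t)|^2$ and take expectations, so the martingale terms vanish and no B-D-G or Kunita estimates are needed. We split the drift difference as
\begin{align*}
b(x^{i,N}(s),\mu_s^{x,N})-b_\Delta(X^{i,N}(\check s),\mu_{\check s}^{X,N})
=&\big[b(x^{i,N}(s),\mu_s^{x,N})-b(X^{i,N}(s),\mu_s^{X,N})\big]\\
&+\big[b(X^{i,N}(s),\mu_s^{X,N})-b(X^{i,N}(\check s),\mu_{\check s}^{X,N})\big]\\
&+\big[b(X^{i,N}(\check s),\mu_{\check s}^{X,N})-b_\Delta(X^{i,N}(\check s),\mu_{\check s}^{X,N})\big].
\end{align*}

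The first bracket is handled by Remark \ref{remark}. It contributes $-(2L_1-1-L_5)|e^i|^2+L_5\mathbb W_2^2(\mu_s^{x,N},\mu_s^{X,N})$. By exchangeability, $\mathbb E\mathbb W_2^2(\mu_s^{x,N},\mu_s^{X,N})\le \mathbb E|e^i(s)|^2$.

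The remaining two brackets are paired with $e^i$ via Young's inequality $2\langle e,v\rangle\le \varepsilon|e|^2+\varepsilon^{-1}|v|^2$, with $\varepsilon$ absorbed into the margin. The diffusion and jump differences are split in the same way using Assumption \ref{g}. They contribute at most $4\lambda_3(|e^i|^2+\mathbb W_2^2)$ plus terms involving $X^{i,N}(s)-X^{i,N}(\check s)$. The hypothesis $2L_1-2L_5-8\lambda_3-3>0$ should leave a strictly negative net coefficient $-\lambda$ in front of $\mathbb E|e^i|^2$.

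Multiplying by $e^{\lambda s}$ and integrating then gives
\[
\mathbb E|e^i(t)|^2\le C\int_0^t e^{-\lambda(t-s)}R(s)\,\mathrm ds,
\]
where
\begin{align*}
R(s)=&\,\mathbb E\Big[|X^{i,N}(s)-X^{i,N}(\check s)|^2\big(1+|X^{i,N}(s)|^q+|X^{i,N}(\check s)|^q\big)^2\Big]\\
&+\Delta\,\mathbb E\big(1+|X^{i,N}(\check s)|^{2q+1}\big)^2+\mathbb E\mathbb W_2^2(\mu_s^{X,N},\mu_{\check s}^{X,N}).
\end{align*}
The theorem follows once $\sup_s R(s)\le C\Delta^{2/(2+\delta)}$, since $\int_0^t e^{-\lambda(t-s)}\,\mathrm ds\le 1/\lambda$.

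The main obstacle is bounding $R$ uniformly in time. Two ingredients are needed. First, Lemma \ref{new} gives $\sup_k\mathbb E|X^{i,N}(t_k)|^{p}\le C$ for $p\le p_0$; it is stated under Assumption \ref{bound} precisely for this purpose. Second, we need a uniform-in-time version of Lemma \ref{3.2}: $\sup_{t\ge0}\mathbb E|X^{i,N}(t)-X^{i,N}(\check t)|^{p}\le C\Delta$. This follows directly from the one-step representation \eqref{eq3.1} on $[\check t,t]$. We use \eqref{b}, the bounded diffusion and jump terms from Assumption \ref{bound} (or Assumption \ref{GAMMA}), Kunita's inequality on an interval of length at most $\Delta$, and the uniform moments at grid points; no time-dependent constant ever enters. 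These two bounds also give uniform moments of $X^{i,N}(t)$ off the grid.

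With both in hand, H\"older's inequality with exponents $\frac{2+\delta}{2}$ and $\frac{2+\delta}{\delta}$ bounds the first term of $R$ by $C\Delta^{2/(2+\delta)}$, using $\frac{2q(2+\delta)}{\delta}\le p_0$. The second term is at most $C\Delta$, using $2(2q+1)\le p_0$. The third is at most $\frac1N\sum_j\mathbb E|X^{j,N}(s)-X^{j,N}(\check s)|^2\le C\Delta$. Finally, we must verify carefully that the Young-inequality splitting of the cross terms leaves the constant $\lambda$ positive exactly under $2L_1-2L_5-8\lambda_3-3>0$. If the numerology is off, we will adjust the weights $\varepsilon$ so that the terms involving $e^i$ are absorbed into the dissipative term.
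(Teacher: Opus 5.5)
Your proposal is correct and follows essentially the same route as the paper: It\^o's formula for $e^{\lambda t}|e^i(t)|^2$ with the same three-way splitting of the drift difference, Lemma \ref{new} together with a uniform-in-time version of Lemma \ref{3.2}, and H\"older's inequality with exponents $\frac{2+\delta}{2}$ and $\frac{2+\delta}{\delta}$. The numerology you were unsure about does close: taking $\varepsilon=1$ in your Young inequality, together with the factor-$2$ splitting of the diffusion and jump differences, gives exactly the paper's choice $\lambda=2L_1-2L_5-8\lambda_3-3$, so no reweighting is needed.
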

 \begin{proof}
 	Denote $ e^{i}(t):=x^{i,N}(t)-X^{i,N}(t) $ for any $t\geq 0$. From \eqref{eq3.1} and It{\^o}'s formula,  we get for $\lambda>0$,
 	\begin{align*}
 		\mathbb{E}\left( 	e^{\lambda t}|e^{i}(t)|^{2}\right)
 		\leq&\mathbb{E}\bigg[ \int_{0}^{t} e^{\lambda s}\Big(\lambda |e^{i}(s)|^{2}+2\left\langle e^{i}(s),b(x^{i,N}(s),\mu_s^{x,N})-b_{\Delta}(X^{i,N}(\check{s}),\mu_{\check{s}}^{X,N}) \right\rangle \\
 		&\quad+\left| g(x^{i,N}(s),\mu_s^{x,N})-g(X^{i,N}(\check{s}),\mu_{\check{s}}^{X,N})\right|^2\\
 		&\quad+\int_{Z}\left| \gamma(x^{i,N}(s),\mu_s^{x,N},z)-\gamma(X^{i,N}(\check{s}),\mu_{\check{s}}^{X,N},z)\right| ^2 \nu(\mathrm{d}z)\Big) \mathrm{d}s\bigg],
 	\end{align*}
 	where by Remark \ref{remark} and the definition of $b_{\Delta}$,
 	\begin{align}\label{5.18}
 		&\left\langle e^{i}(s),b(x^{i,N}(s),\mu_s^{x,N})-b_{\Delta}(X^{i,N}(\check{s}),\mu_{\check{s}}^{X,N}) \right\rangle\notag\\
 		\leq &\left\langle e^{i}(s),b(x^{i,N}(s),\mu_s^{x,N})-b(X^{i,N}(s),\mu_s^{X,N}) \right\rangle+\left\langle e^{i}(s),b(X^{i,N}(s),\mu_s^{X,N}) -b(X^{i,N}(\check{s}),\mu_{\check{s}}^{X,N})\right\rangle\notag\\
 		&+\left\langle e^{i}(s),b(X^{i,N}(\check{s}),\mu_{\check{s}}^{X,N}) -b_{\Delta}(X^{i,N}(\check{s}),\mu_{\check{s}}^{X,N})\right\rangle\notag\\
 		\leq &-\left(L_1-\frac{1}{2}-\frac{1}{2}L_5 \right) \left|e^{i}(s) \right|^2+\frac{1}{2}L_5\mathbb{W}_2^2(\mu_s^{x,N},\mu_s^{X,N}) +\frac{1}{2}\left|e^{i}(s) \right|^2\notag\\
 		&+L_0(1+\left| X^{i,N}(s)\right|^{2q}+\left| X^{i,N}(\check{s})\right|^{2q} )\left| X^{i,N}(s)-X^{i,N}(\check{s}) \right|^2\notag\\
 		& +L_5\left(\left|X^{i,N}(s)-X^{i,N}(\check{s})\right|^2+ \mathbb{W}_2^2(\mu_s^{X,N},\mu_{\check{s}}^{X,N})\right)+\frac{1}{2}\left|e^{i}(s) \right|^2 \notag\\
 		&+\left( L_0\left(1+ \left| X^{i,N}(\check{s})\right|^{2q} \right)\left| X^{i,N}(\check{s})\right|^2+C\right)\left| X^{i,N}(\check{s})\right|^{2q} \Delta,
 	\end{align}
 	where $C=\left|b_1(0) \right|^2 $. Note that $\left| b(x,\mu)-b_\Delta(x,\mu)\right| \leq \left|b_1(x) \right|\left| x\right|^q \sqrt{\Delta}$. 
 	\par Employing Assumption \ref{GAMMA}, together with \eqref{5.18} and H{\"o}lder's inequality, we have
 	\begin{align*}
 		&\mathbb{E}\left( 	e^{\lambda t}|e^{i}(t)|^{2}\right)\\
 		\leq&\int_{0}^{t}e^{\lambda s}\Big[ \left( \lambda-2L_1+2L_5+8\lambda_{3}+3\right) \mathbb{E}\left|e^{i}(s) \right|^2+\left(4L_5+8\lambda_{3} \right)\mathbb{E} \left| X^{i,N}(s)-X^{i,N}(\check{s}) \right|^2\\
 		&\qquad+2L_0\left( \mathbb{E}(1+\left| X^{i,N}(s)\right|^{2q}+\left| X^{i,N}(\check{s})\right|^{2q} )^{\frac{2+\delta}{\delta}}\right) ^{\frac{\delta}{2+\delta}}\left( \mathbb{E}\left| X^{i,N}(s)-X^{i,N}(\check{s}) \right|^{2+\delta}\right)^{\frac{2}{2+\delta}}\\
 		&\qquad+ C\left(  \mathbb{E}\left| X^{i,N}(\check{s})\right|^{2}+\mathbb{E}\left| X^{i,N}(\check{s})\right|^{4q+2}+1\right) \Delta\Big]\mathrm{d}s.
 	\end{align*}
 	Following the proof of Lemma \ref{3.2}, it can also be shown that for any $p\geq 2$, 
 	\begin{eqnarray*}
 		\sup_{1 \leq i \leq N}\sup_{s \geq 0}\mathbb{E}\left| X^{i,N}(s)-X^{i,N}(\check{s})\right| ^{p} \leq C\Delta,
 	\end{eqnarray*}
 	and with the help of Lemma \ref{new}, choose $\lambda=2L_1-2L_5-8\lambda_{3}-3>0$, the desired assertion can be obtained.
 \end{proof}
 Based on Theorems \ref{4.2}, \ref{4.5}, \ref{uniform-in time PoC} and \ref{uniform-in time convergence}, we now estimate the distance between the numerical invariant measure and the exact invariant measure.
 
 \begin{theorem}	\label{4.6}
 	Let Assumptions \ref{g}, \ref{dos},\ref{p}, \ref{GAMMA}  and \ref{bound} hold with $(L_1\land 2L_3) -1-6L_5-2(m+1)\lambda_3>0$, $2L_1-2L_5-8\lambda_{3}-3>0$, $p_0>4$
 	and $2(2q+1)\vee \frac{2q(2+\delta)}{\delta} \le p_0$, $\delta\in(0,1)$.  Then, for any 
 	$\Delta\in (0,\Delta^{*}\land \Delta^{**}] $, there exists a constant  $C>0$, independent of $t,N,\Delta$,
 	\begin{align*}
 		\mathbb{W}^2_{2}(\Pi^{\Delta,N},(\mu^*)^{\otimes N})\leq CN \left( \Delta^{\frac{2}{2+\delta}}+\varphi(N)\right).
 	\end{align*}
 	where $\Delta^{*} $ and $\Delta^{**} $ are given in Theorem \ref{4.5} and Lemma \ref{new}, respectively. Moreover, 
 	\begin{equation*}
 		\mathbb{W}^2_{2}(\mu^{\Delta,N},\mu^*)\leq C \left( \Delta^{\frac{2}{2+\delta}}+\varphi(N)\right).
 	\end{equation*}
 \end{theorem}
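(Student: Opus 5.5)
The approach is a three-term triangle inequality, as in the proof of Theorem \ref{convergence}. The difference is that the finite-time estimates are replaced by the uniform-in-time bounds of Theorems \ref{uniform-in time PoC} and \ref{uniform-in time convergence}. Because those bounds hold for all $t$, the intermediate time $t$ can be sent to infinity, so no $\varepsilon$-argument is needed.

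Fix $\mu\in\mathcal P_{p_0}(\mathbb R^d)$, for instance $\mu=\delta_0$. Let $\xi^i\sim\mu$ be i.i.d., and drive the NIPS \eqref{eq2.8}, the IPS \eqref{eq2.7} and the scheme \eqref{eq3.1} with the same $(\xi^i,B^i,\widetilde N^i)$. For any grid time $t=t_k$ we write
\begin{align*}
\mathbb{W}_2\bigl(\Pi^{\Delta,N},(\mu^*)^{\otimes N}\bigr)\le{}& \mathbb{W}_2\bigl(\Pi^{\Delta,N},\mathbf P^{\Delta,N}_{t_k}\mu^{\otimes N}\bigr)+\mathbb{W}_2\bigl(\mathbf P^{\Delta,N}_{t_k}\mu^{\otimes N},\mathbf P_{t_k}\mu^{\otimes N}\bigr)\\
&+\mathbb{W}_2\bigl(\mathbf P_{t_k}\mu^{\otimes N},(\mu^*)^{\otimes N}\bigr).
\end{align*}

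The first term is handled by Theorem \ref{4.5}: its square is at most $e^{-\bar\lambda^*t_k}\mathbb{W}_2^2(\mu^{\otimes N},\Pi^{\Delta,N})$. The factor $\mathbb{W}_2^2(\mu^{\otimes N},\Pi^{\Delta,N})$ is finite for fixed $N,\Delta$, since $\Pi^{\Delta,N}\in\mathcal P_2$. The third term is handled by Corollary \ref{joint}: its square is at most $e^{-\lambda^*t_k}\mathbb{W}_2^2(\mu^{\otimes N},(\mu^*)^{\otimes N})$. This factor is also finite, and it is bounded by $2N(\mathbb E|\xi|^2+\int|x|^2\mu^*(\mathrm dx))$; the second moment of $\mu^*$ is bounded via Lemma \ref{infty bound}, with Remark \ref{remark} supplying the hypotheses of Theorem \ref{4.2}. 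Both terms therefore vanish as $k\to\infty$ with $N,\Delta$ held fixed.

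The middle term is estimated by the synchronous coupling together with Theorems \ref{uniform-in time PoC} and \ref{uniform-in time convergence}:
\[
\mathbb{W}_2^2\bigl(\mathbf P^{\Delta,N}_{t_k}\mu^{\otimes N},\mathbf P_{t_k}\mu^{\otimes N}\bigr)\le \sum_{i=1}^N\mathbb E|X^{i,N}(t_k)-x^i(t_k)|^2\le 2\sum_{i=1}^N\bigl(\mathbb E|X^{i,N}-x^{i,N}|^2+\mathbb E|x^{i,N}-x^i|^2\bigr)(t_k),
\]
which is at most $CN(\Delta^{2/(2+\delta)}+\varphi(N))$ uniformly in $k$. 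Letting $k\to\infty$ in the squared triangle inequality then gives the first claim.

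The second claim follows from \eqref{key}. We use exchangeability of $\Pi^{\Delta,N}$ as in Theorem \ref{convergence}: an optimal coupling $\pi$ of $\Pi^{\Delta,N}$ and $(\mu^*)^{\otimes N}$ has each $i$-th pair of marginals coupling $\mu^{\Delta,N}$ and $\mu^*$. Hence
\[
N\,\mathbb{W}_2^2(\mu^{\Delta,N},\mu^*)\le \sum_{i=1}^N\int|x_i-y_i|^2\,\mathrm d\pi=\mathbb{W}_2^2\bigl(\Pi^{\Delta,N},(\mu^*)^{\otimes N}\bigr),
\]
and dividing by $N$ gives the result.

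The main obstacle is checking that the hypotheses of every invoked result hold simultaneously, so that the constant $C$ is independent of $t$, $N$ and $\Delta$. Concretely:
\begin{itemize}
\item Theorem \ref{uniform-in time PoC} needs $2L_1-6\lambda_3-1-3L_5>0$. This follows from the assumed $(L_1\wedge2L_3)-1-6L_5-2(m+1)\lambda_3>0$, since then $L_1>1+6L_5+4\lambda_3$.
\item The exact-solution contraction needs $L_1-L_5-4\lambda_3-1>0$, which follows in the same way.
\item The step-size restriction must be $\Delta\le\Delta^*\wedge\Delta^{**}$, so that Theorem \ref{4.5} and Lemma \ref{new} apply together.
\item The initial law must satisfy Assumption \ref{p}; choosing $\mu=\delta_0$ guarantees this.
\end{itemize}
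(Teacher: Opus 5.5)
Your proposal is correct and follows essentially the same route as the paper's proof. Both send $t\to\infty$ in a three-term triangle inequality, kill the outer terms with Theorem \ref{4.5} and Corollary \ref{joint}, bound the middle term by synchronous coupling through Theorems \ref{uniform-in time PoC} and \ref{uniform-in time convergence}, and pass to marginals via \eqref{key}. Working only at grid times $t_k$ and checking the parameter conditions explicitly are mild refinements: they sidestep the paper's claim that $X^{i,N}(t)=X^{i,N}(t_k)$ off the grid, which is inconsistent with the continuous interpolation \eqref{eq3.1}.
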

 \begin{proof}
 	Since $\mu_t^{x^i}=\mu_t^x$ for every $i=1,2,\cdots,N $, let $\mu^{\otimes N}$ be the initial distribution of solutions $\left( x^1(t),\cdots,x^N(t)\right) $ and $\left( X^{1,N}(t),\cdots,X^{N,N}(t)\right) $, applying the triangle  inequality, estimate \eqref{Pt} and Corollary \ref{3.4}, we obtain for any 
 	$\mu\in\mathcal{P}_{2}\left(\mathbb{R}^d\right)$, correspondingly $\mu^{\otimes N}\in\mathcal{P}_{2}\left( (\mathbb{R}^d)^N\right)$,
 	\begin{align*}
 		\mathbb{W}^2_{2}(\Pi^{\Delta,N},(\mu^*)^{\otimes N})
 		\leq&\lim_{t\to\infty} 3\mathbb{W}^2_{2}(\mathbf{P}^{\Delta,N}_{t}\mu^{\otimes N},\mathbf{P}_{t}\mu^{\otimes N})\\
 		\leq&3 \lim_{t\to\infty}\mathbb{E}\left( \sum_{i=1}^{N}|x^i(t)-X^{i,N}(t) |^2\right)\\ 
 		\leq &6N\left( 
 		\limsup_{t\to\infty}
 		\mathbb E\left|x^i(t)-x^{i,N}(t)\right|^2
 		+
 		\limsup_{t\to\infty}
 		\mathbb E\left|x^{i,N}(t)-X^{i,N}(t)\right|^2
 	\right) ,
 	\end{align*}
 	combining with Theorem \ref{uniform-in time PoC} and Theorem \ref{uniform-in time convergence}, we get the first result. At last, along with \eqref{key},	the proof is complete.
 \end{proof}

	\section{Numerical experiments}
In this section, we present three examples to illustrate our work. Since the exact solution is unavailable for the concerned system, the convergence rate for the example is determined with respect to a proxy solution which is derived from an approximation at a smaller timestep or a larger number of particles. 

Below, \( \mathcal{N}(x, y) \) denotes the normal distribution with mean \( x \in \mathbb{R} \) and variance \( y \in (0, \infty) \), \( U(x, y) \) denotes the uniform distribution over \([x, y]\) for \(-\infty < x < y < \infty\), \( \lambda(x, y) \) denotes the Laplace distribution with location parameter \( x \in \mathbb{R} \) and scale parameter \( y > 0 \).
\begin{example}\label{e1}
	Consider the following MV-SDE with L{\'e}vy noise
	\begin{align*}
		\mathrm{d}x(t) = &\big(-4x(t) - 4x^3(t) + 0.05\,\mathbb{E}[x(t)]\big)\,\mathrm{d}t \notag\\
		&+ \big(1.5 + 0.4\,x(t)\big)\,\mathrm{d}B(t) 
		+ \int_{\mathbb{R}\backslash\{0\}} \sin\!\big(x(t)\big)\,z\;\widetilde{N}(\mathrm{d}t,\mathrm{d}z).
	\end{align*}
	It can be verified that 
	$b(x,\mu) = -4x - 4x^3 + 0.05\int_{\mathbb{R}}y\,\mu(\mathrm{d}y)$,
	$g(x,\mu) = 1.5 + 0.4x$, and 
	$\gamma(x,z) = \sin(x)\,z$
	satisfy Assumptions \ref{g} and \ref{dos}.
	And it follows from Theorem  \ref{4.2} and Remark \ref{remark} that the MV-SDE admits a unique invariant
	measure since	$ L_1-L_5-4\lambda_3-1 =0.3575>0$, where
	$L_1=2$, $L_5=0.0025$ and $\lambda_3=0.16$. Moreover, by virtue of Theorem~\ref{4.5}, the tamed Euler scheme~\eqref{discrete} admits a unique
	numerical invariant measure, since $(L_1\land 2L_3)-1-6L_5-2(m+1)\lambda_3=0.345>0$,
	where $L_3=4$ and $m=1$.
	
	Next, we show our simulation results for the approximation of the
	invariant measure. Let the jump size $z$ follow $\mathcal{N}(0,0.04)$ and the jump intensity be $1$.  Figure 1 shows the evolution of the empirical densities of numerical solutions generated by the tamed Euler scheme starting from different 
	initial distributions:
	$\mathcal{N}(0,1)$, $U(-3,3)$ and $\lambda(0,1)$ with time step $\Delta=2^{-8}$ at times $t = 0, 0.1, 5, 10$. It can be observed that empirical densities generated from different initial distributions gradually approach the same distribution as time evolves,  which illustrates the ergodicity property.
	Figure
	2 (a) compares the empirical distribution functions of the numerical solutions with time step $\Delta=2^{-8}$ at $t =
	10$ for the different	initial distributions mentioned above.
	The curves are almost indistinguishable, which supports our theoretical results as well.
	Taking \(\mathcal{N}(0,1)\) as the initial distribution, Figure~2\,(b) compares the numerical solution obtained with \(\Delta=2^{-8}\) with a reference solution computed using the smaller step size \(\Delta=2^{-12}\)  at $t=10$.
	The similarity between the two distributions indicates that the numerical invariant distribution provides a good approximation to the theoretical one.
	\begin{figure}
		\centering
		
		\includegraphics[width=\linewidth]{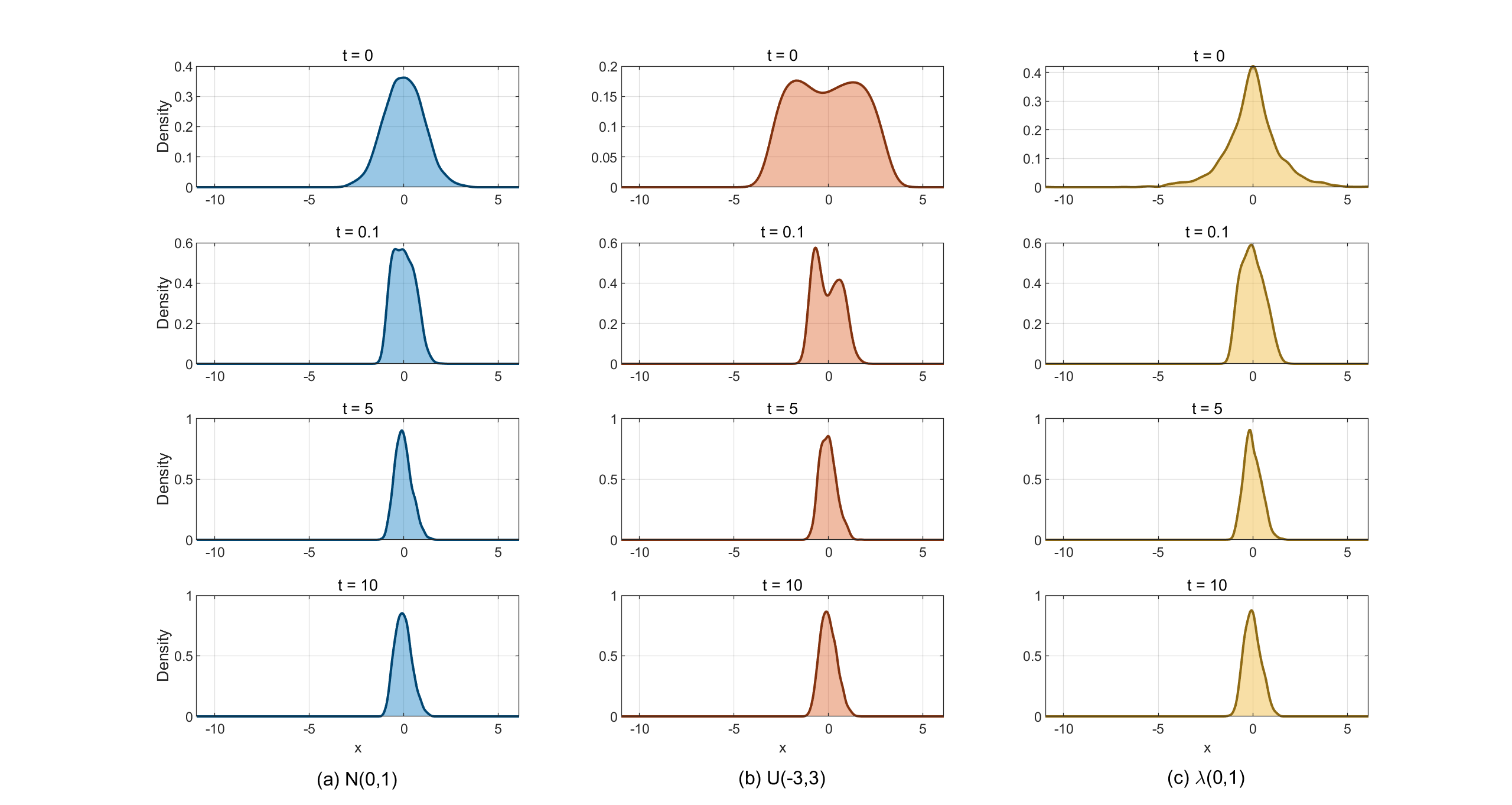} 
		\caption{Approximation of the invariant distribution of Example \ref{e1} with N = 2000 particles.}
		\label{fig:main}
	\end{figure}
	\begin{figure}
		\centering
		\begin{subfigure}{0.45\textwidth}
			\includegraphics[width=\linewidth]{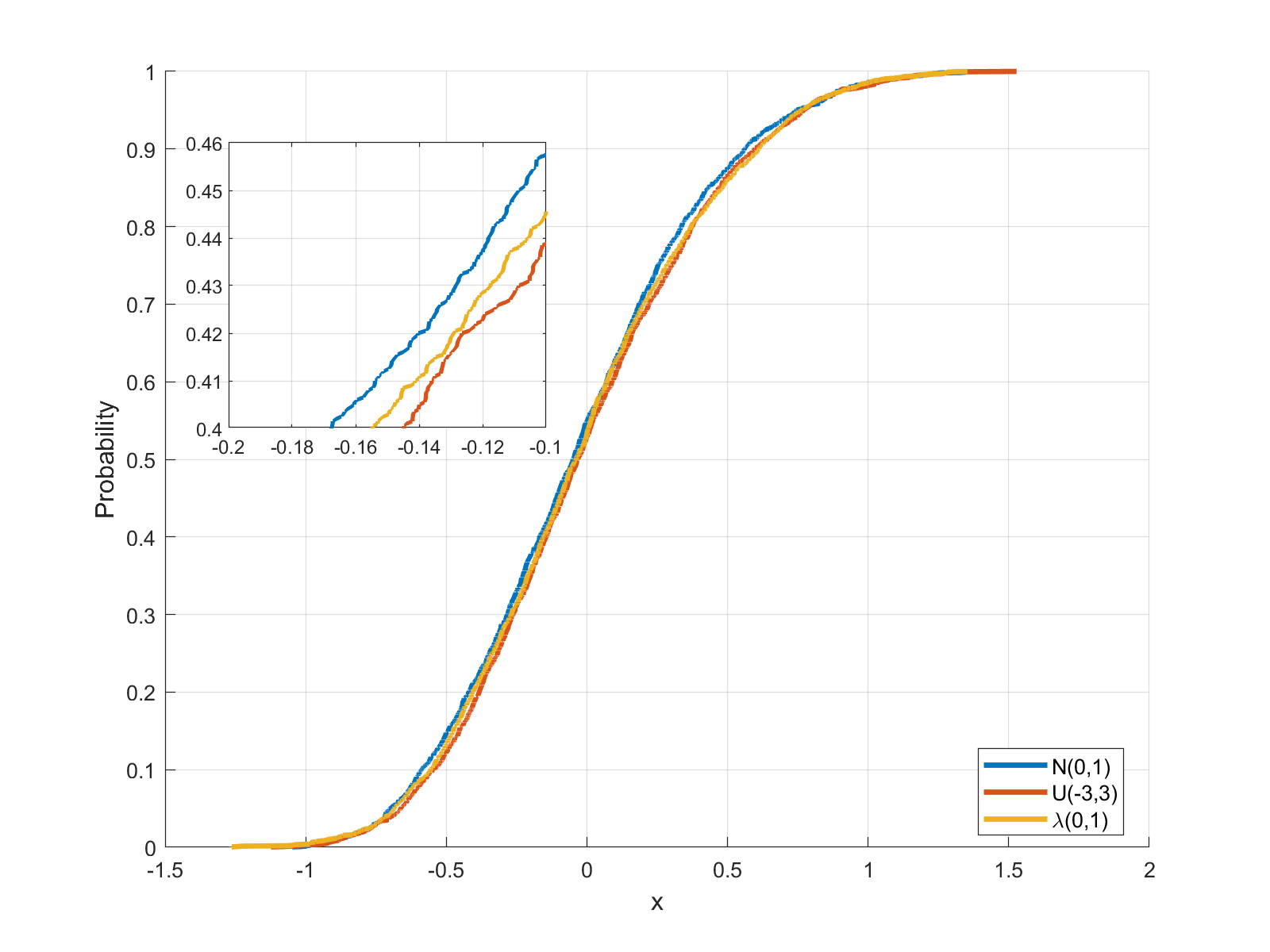} 
			\caption{The empirical distribution functions of the numerical solutions with different initial distributions}
			\label{fig:sub1}
		\end{subfigure}
		\hfill
		\begin{subfigure}{0.45\textwidth}
			\includegraphics[width=\linewidth]{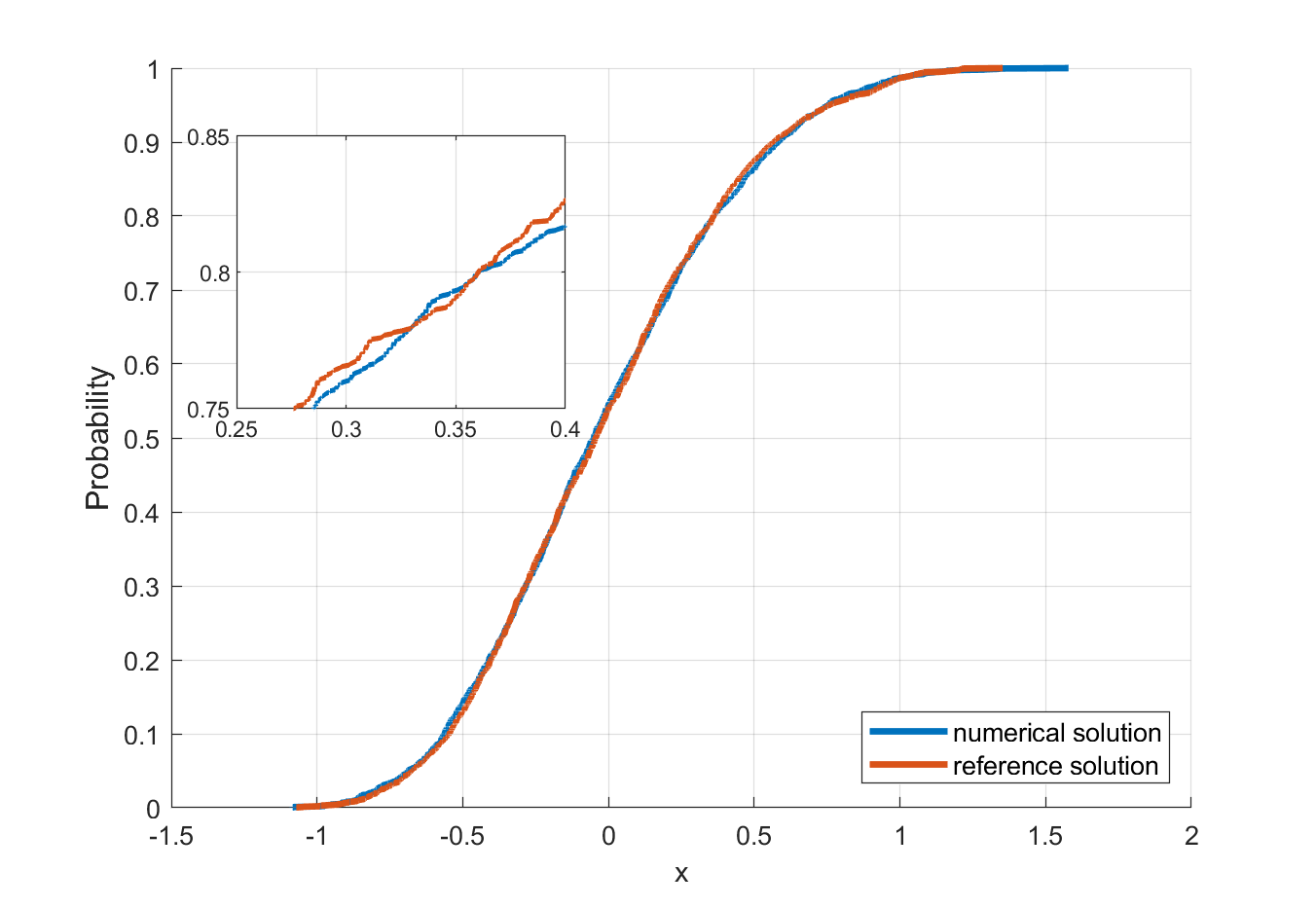} 
			\caption{The empirical distribution functions of the numerical solution and the reference solution
			}
			\label{fig:sub2}
		\end{subfigure}
		\caption{The empirical distribution functions for Example \ref{e1}  with N = 2000 particles.}
		\label{fig:main1}
	\end{figure}
\end{example}
\begin{example}\label{e2}
	Consider the following MV-SDE with L{\'e}vy noise
	\begin{align}
		\mathrm{d}x(t)=&
		\left(-2.5 x(t)-3.4x^3(t)+0.05\mathbb{E}(x(t))\right)\mathrm{d}t
		+\left(0.65+0.15\sin\left(\mathbb{E}(x(t))\right)\right)\mathrm{d}B(t) \notag\\
		&+\int_{\mathbb{R}\backslash\{0\}}\sin z\,\widetilde{N}(\mathrm{d}t,\mathrm{d}z).
		\notag
	\end{align}
	It can be verified that $b(x,\mu)=-2.5x-3.4x^3+0.05\int_{\mathbb{R}^d}y\mu(\mathrm{d}y),$ $g(x,\mu)=0.65+0.15\sin\left(\int_{\mathbb{R}^d}y\mu(\mathrm{d}y)\right),$ $\gamma(x,z)=\sin z$
	satisfy Assumptions \ref{g}, \ref{dos} and \ref{bound}. And it follows from Theorem  \ref{4.2} and Remark \ref{remark} that the MV-SDE admits a unique invariant
	measure since	$ L_1-L_5-4\lambda_3-1 =0.6075>0$, where
	$L_1=1.7$, $L_5=0.0025$ and $\lambda_3=0.0225$. Moreover, by virtue of Theorem~\ref{4.5}, the tamed Euler scheme~\eqref{discrete} admits a unique
	numerical invariant measure, since $(L_1\land 2L_3)-1-6L_5-2(m+1)\lambda_3=0.595>0$, where $ L_3=3.4$ and $m=1$. Furthermore, $2L_1-2L_5-8\lambda_3-3=0.215>0,$
	and hence Theorem \ref{4.6} is also satisfied.
	
	Let the jump size \(z\) follow \(\mathcal{N}(0,1)\) and let the jump intensity be \(1\).
	Figure 3 depicts the evolution of the empirical densities of numerical solutions  with time step \(\Delta=2^{-8}\) at times $t = 0, 0.1, 5, 10$  starting from the initial distributions \(\mathcal{N}(0,1)\), \(U(-3,3)\) and $\lambda(0,1)$.
	It can be observed that, as time evolves, all densities from different initial laws gradually approach the same distribution.
	
	 Since the convergence rates in Theorem \ref{4.6} are derived from the convergence rates of PoC and time discretization, we only test the $L_2$ error of PoC and time discretization to verify the result of Theorem \ref{4.6}. Fixing the step size $\Delta$ and choosing different number of particles $ N_k, k\in \mathbb{N} $, the error with respect to the number of particles $N$ is measured by
	  \begin{align}
	  Propagation~of~chaos~error (PoC-\!Error)\approx\frac{1}{N_k}\sum_{i=1}^{N_k}\left| X^{i,N}(T)-\!X^{i,N_k}(T)\right|^{2} ,\notag
	  \end{align}
	  where $ X^{i,N}(T) $ is the numerical approximation of $ x^{i,N}(T) $. The error  with respect to the step size $\Delta$ is quantified by 
	  \begin{align}
	  	Time~discretization~error (Time-\!Error)\approx\frac{1}{N}\sum_{i=1}^{N}\left| X_M^{i,N}(T)-\!X_{M_k}^{i,N}(T)\right|^{2},\notag
	  \end{align}
	  where  we fix the number of particles $ N $ and choose different choices of $ M_k\in \mathbb{N} $, then $ X_M^{i,N}(T) $ and $ X_{M_k}^{i,N}(T) $ mean the  numerical approximations of $ x^{i,N}(T) $ with different step sizes. 
	  
	 Figure~4 (left) displays time-step convergence test at $t=5$ with the time step $\Delta \in \{2^{-4},2^{-5},2^{-6},2^{-7},2^{-8}\}$ for fixed $N=5000$, where we regard the numerical solution with \(\Delta=2^{-10}\) as the reference solution. 
	Figure~4 (right) shows the particle-number convergence test at $t=5$ with particle numbers $N \in \{500,1000,1500,2000,2500\}$ for a fixed time step $\Delta=2^{-6}$, where the reference empirical distribution is computed with 
	$N=10000$ and $\Delta=2^{-9}$. The numerical results are consistent with the theoretical convergence rates in Theorem~\ref{4.6}.
\begin{figure}
	\centering
	
	\includegraphics[width=\linewidth]{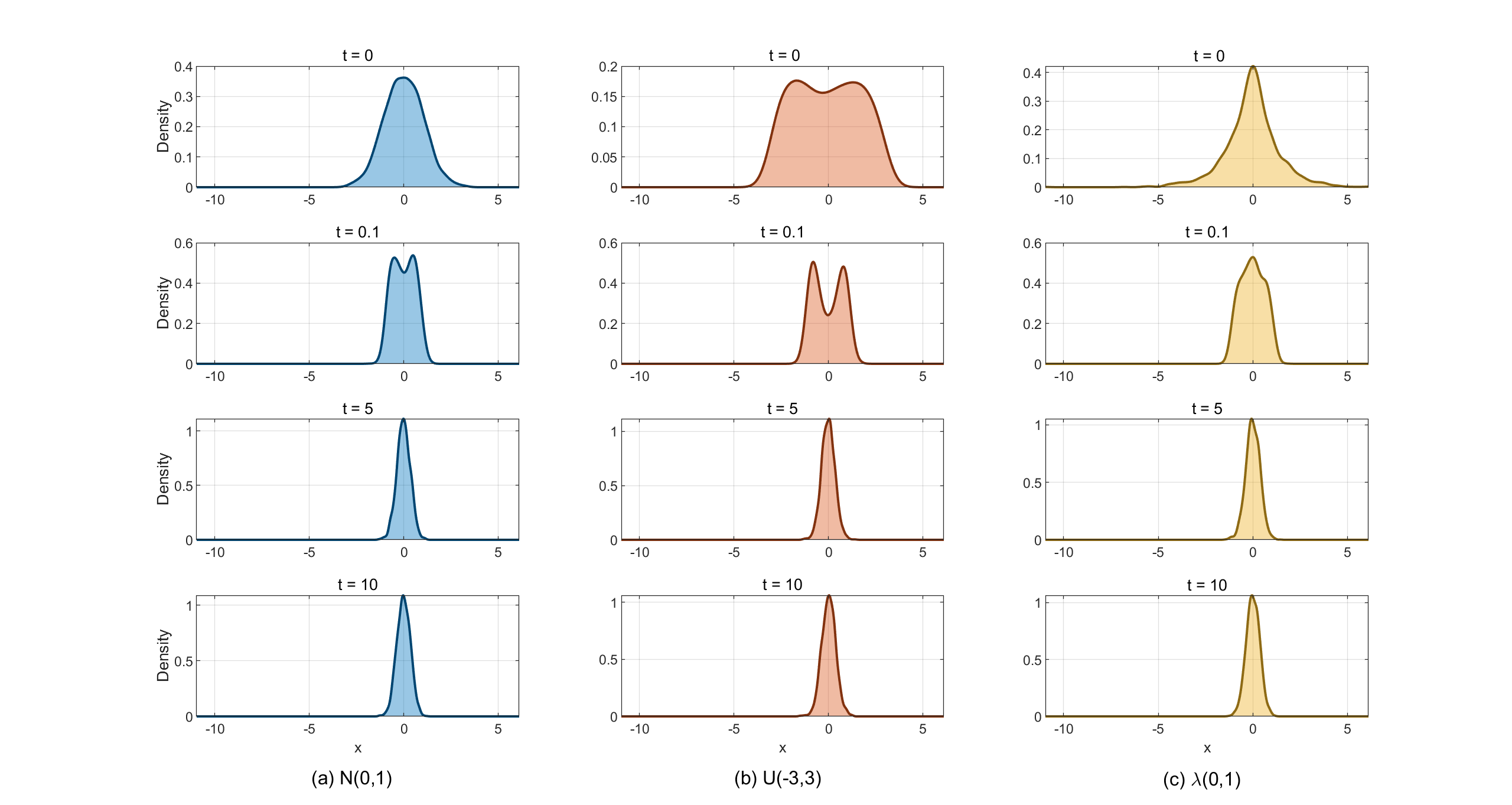} 
	\caption{Approximation of the invariant distribution of Example \ref{e2} with N = 2000 particles.}
	\label{fig:main2}
\end{figure}
\begin{figure}
	\centering
	\begin{subfigure}{0.45\textwidth}
		\includegraphics[width=\linewidth]{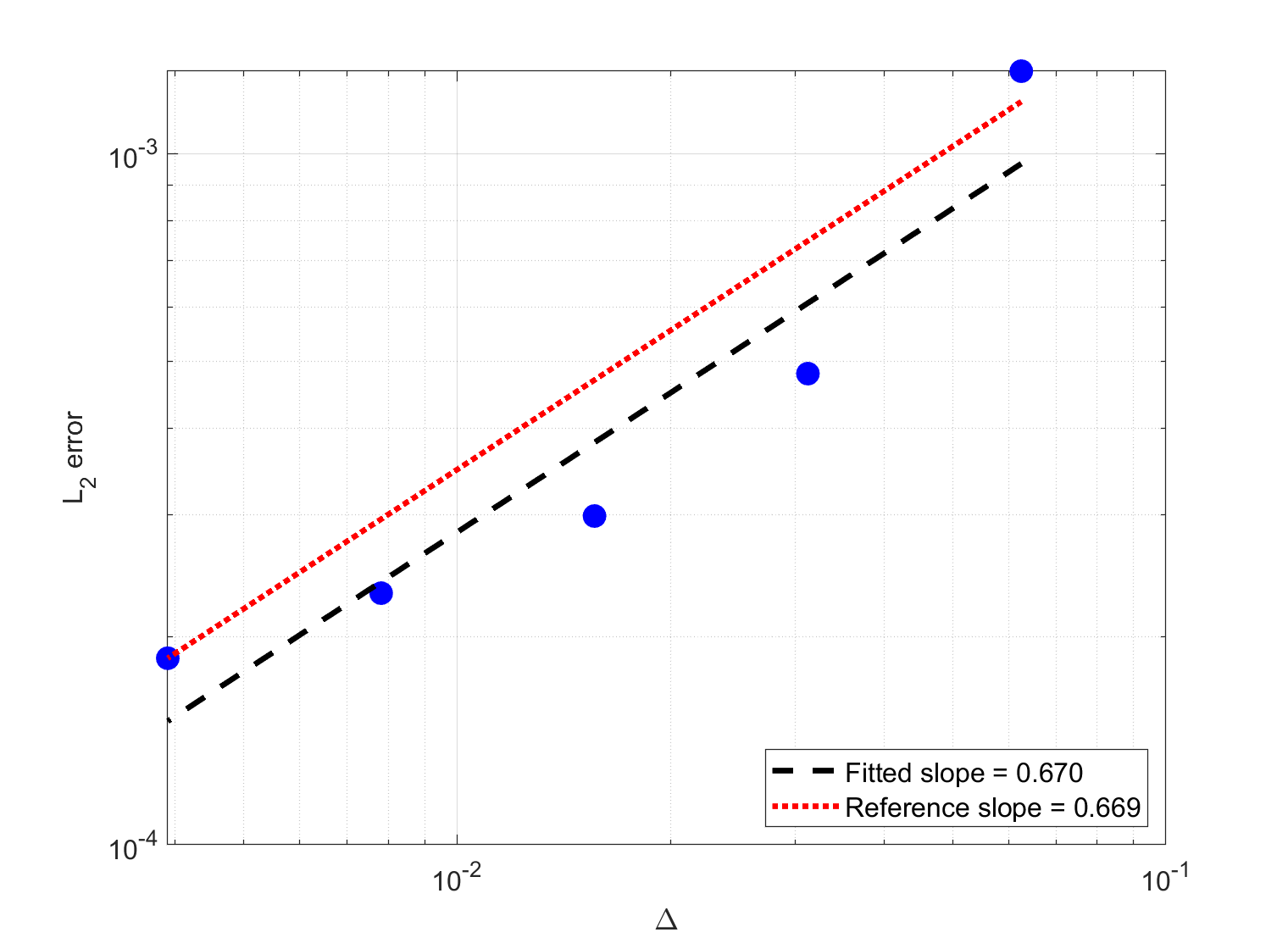} 
		\label{fig:sub3}
	\end{subfigure}
	\hfill
	\begin{subfigure}{0.45\textwidth}
		\includegraphics[width=\linewidth]{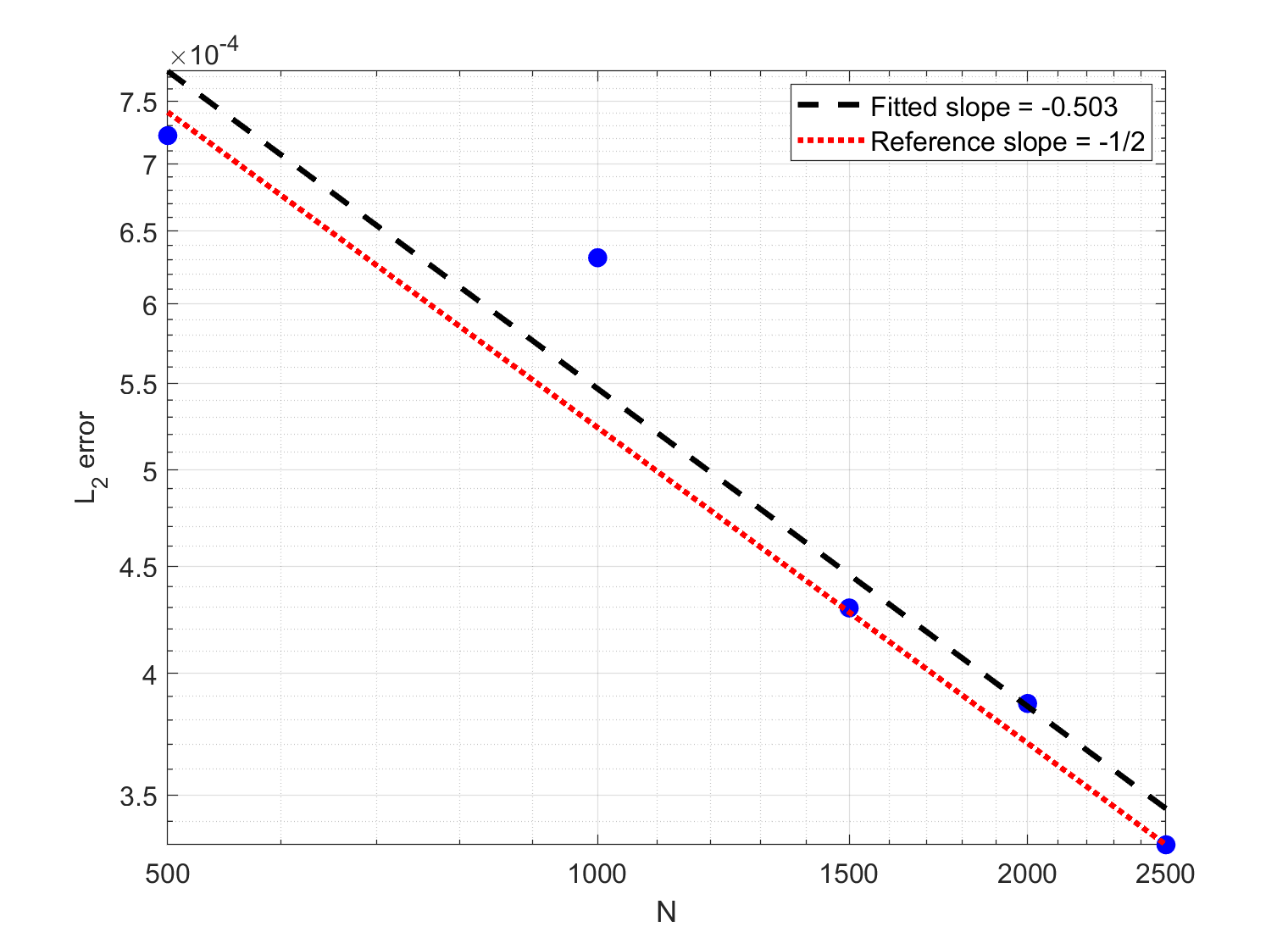} 
		\label{fig:sub4}
	\end{subfigure}
	\caption{Convergence rates of invariant measures for Example \ref{e2}  with $\delta=0.99$.}
	\label{fig:main3}
\end{figure}
\end{example}
 \begin{example}\label{e3}
 	Consider a 2-dimensional MV-SDE with L{\'e}vy noise
 		\begin{align}
 		\mathrm{d}x(t)=b(x(t),\mu_t^x) \mathrm{d}t+\mathrm{d}B(t)+\int_{Z} 	\begin{pmatrix}
 			\mathrm{sin} z\\
 			\mathrm{sin} z
 		\end{pmatrix}\widetilde{N}(\mathrm{d}t,\mathrm{d}z),\notag
 	\end{align}
 	where $x=(x_1,x_2)\in \mathbb{R}^2$ and
 	\[
 	b(x,\mu)=
 	\begin{pmatrix}
 		-1.8x_1+x_2-2.1x_1^3-2.1x_1x_2^2+0.05 \int_{\mathbb{R}^d}x_1\mu(\mathrm{d}x)\\
 		-1.8x_2-2.1x_2^3-2.1x_1^2x_2+0.05 \int_{\mathbb{R}^d}x_2\mu(\mathrm{d}x)
 	\end{pmatrix}
 	.
 	\]
 	A direct computation implies that $ b(x,\mu) $ satisfies Assumption \ref{dos}.  And it follows from Theorem  \ref{4.2} and Remark \ref{remark} that the MV-SDE admits a unique invariant
 	measure since	$ L_1-L_5-4\lambda_3-1 =0.0475>0$, where
 	$L_1=1.05$, $L_5=0.0025$ and $\lambda_3=0$. Moreover, by virtue of Theorem~\ref{4.5}, the tamed Euler scheme~\eqref{discrete} admits a unique
 	numerical invariant measure, since $(L_1\land 2L_3)-1-6L_5-2(m+1)\lambda_3=0.035>0$, where $ L_3=2.1$ and $m=2$.
 	
 	Let the jump size $ z $ follow \(\mathcal{N}(0,1)\) and the jump intensity be 1.
 	Figure~5 shows the evolution of the empirical densities of numerical solutions from three different initial distributions: $ \mathcal{N}(0,I_2),$ $ U(-3,3)^2,$ $ \lambda(0,1)^2$ with the step size $ \Delta=2^{-8} $ at times $t=0,0.1,2,10$. It can be observed that the  distributions become almost the same as time evolves.
 	
 	Taking \(\mathcal{N}(0,I_2)\) as the initial distribution, 
 	we compare the numerical solution computed with step size \(\Delta=2^{-8}\) with a reference solution computed with the smaller step size \(\Delta=2^{-12}\). 
 	Both solutions are simulated up to \(t=10\).
 	Figure~6 depicts the empirical density functions of the numerical solution and the reference solution in 3D and 2D settings,
 	respectively. 
 	The close agreement between the two empirical densities further illustrates the convergence of the numerical invariant measure obtained by the tamed Euler method.
 	\begin{figure}
 		\centering
 		
 		\includegraphics[width=\linewidth]{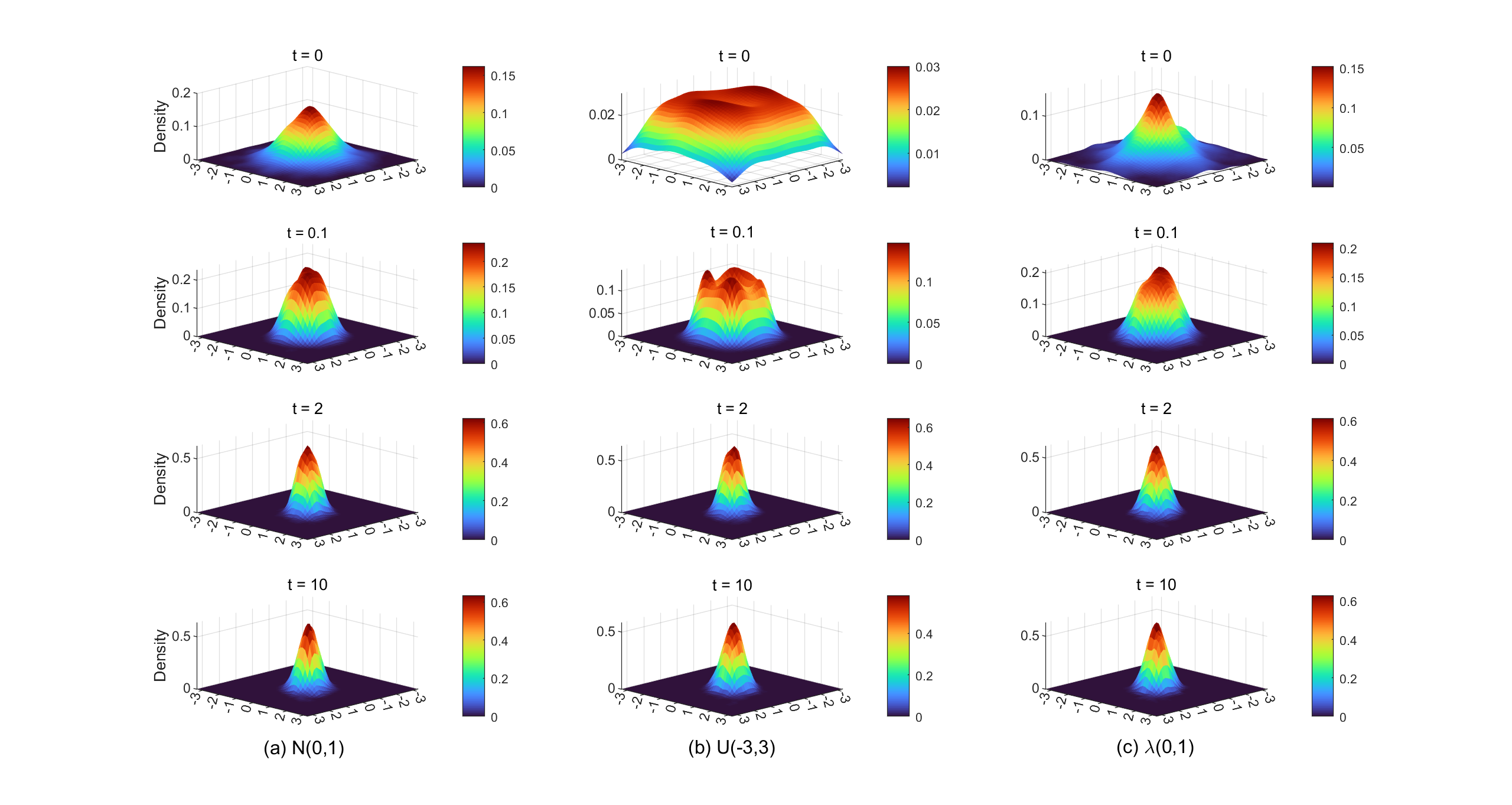} 
 		\caption{Approximation of the invariant distribution of Example \ref{e3} with N = 2000 particles.}
 		\label{fig:main4}
 	\end{figure}
 	\begin{figure}
 		\centering
 		
 		\includegraphics[width=\linewidth]{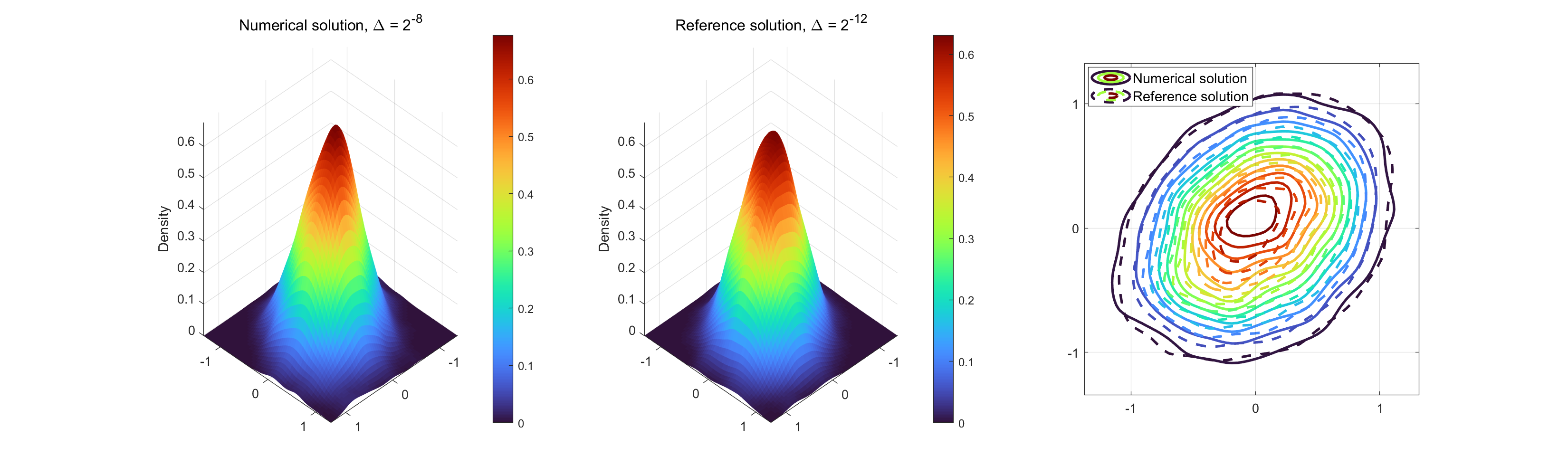} 
 			\caption{Comparison of empirical density functions  for Example \ref{e3} with N = 3000 particles.}
 		\label{fig:main5}
 	\end{figure}
 \end{example}
 \textbf{CRediT authorship contribution statement}
 \\
 
 \textbf{Yang Sun:} Writing – original draft; \textbf{Yuhang Zhang:}  Writing – review \& editing; \textbf{Minghui Song:} Supervision, Funding acquisition.
 \\
 
 \textbf{Data availability}
 \\
 
 No external data were used; all data in the numerical experiments were generated by simulations.
 \\
 
  \textbf{Declaration of competing interest}
  \\
  
  The authors declare that they have no known competing financial interests or personal relationships that could have appeared to influence the work reported in this paper.
  \\
  
  \textbf{Acknowledgment}
\\

  This work is supported by the National Natural Science Foundation of China(No. 12471372).



  \bibliographystyle{elsarticle-num-names} 
  \bibliography{myrefs}






\end{document}